\newcommand{\Tor}{\operatorname{Tor}}
\newcommand{\Hom}{\operatorname{Hom}}
\DeclareMathOperator{\lc}{H}
\DeclareMathOperator{\Minh}{Minh}
\newcommand{\Spec}{\operatorname{Spec}}
\newcommand{\length}{\ell}
\newcommand{\mf}{\mathfrak}
\newcommand{\frq}[1]{{#1}^{[q]}}
\newcommand{\ul}[1]{\underline{#1}}
\newcommand{\red}[1]{{#1}_{\text{red}}}
\DeclareMathOperator{\pd}{pd}
\DeclareMathOperator{\m}{\mathfrak{m}}
\DeclareMathOperator{\n}{\mathfrak{n}}
\DeclareMathOperator{\numg}{\nu}
\DeclareMathOperator{\Proj}{Proj}
\DeclareMathOperator{\rank}{rank}
\DeclareMathOperator{\lm}{c_{LM}}
\DeclareMathOperator{\im}{image}
\DeclareMathOperator{\eh}{e}
\DeclareMathOperator{\fsig}{s}
\DeclareMathOperator{\ehk}{e_{HK}}
\DeclareMathOperator{\Min}{Min}
\DeclareMathOperator{\Ann}{Ann}
\newtheorem{theorem}{Theorem}
\newtheorem{lemma}[theorem]{Lemma}
\newtheorem{proposition}[theorem]{Proposition}
\newtheorem{corollary}[theorem]{Corollary}
\newtheorem*{statement*}{Statement}
\newtheorem*{theorem*}{Theorem}
\newtheorem*{theoremA*}{Theorem A}
\newtheorem*{theoremB*}{Theorem B}
\newtheorem*{theoremC*}{Theorem C}
\newtheorem*{theoremD*}{Theorem D}
\newtheorem*{lemma*}{Lemma}
\newtheorem*{fact*}{Fact}
\theoremstyle{definition}
\newtheorem*{definition*}{Definition}
\newtheorem{example}[theorem]{Example}
\newtheorem*{example*}{Example}
\newtheorem{remark}[theorem]{Remark}
\newtheorem{question}[theorem]{Question}
\newcommand{\RomanNumeralCaps}[1]
    {\MakeUppercase{\romannumeral #1}}
\numberwithin{equation}{section}
\begin{document}

\title{Colength, multiplicity, and ideal closure operations \RomanNumeralCaps{2}}

\author{Linquan Ma}
\address{Department of Mathematics, Purdue University, West Lafayette, IN 47907 USA}
\email{ma326@purdue.edu}

\author{Pham Hung Quy}
\address{Department of Mathematics, FPT University, Hanoi, Vietnam}
\email{quyph@fe.edu.vn}

\author{Ilya Smirnov}
\address{BCAM-Basque Center for Applied Mathematics, Bilbao, Spain and Ikerbasque, Basque Foundation for Science, Bilbao, Spain}
\email{ismirnov@bcamath.org}

\subjclass{13A35, 13B22, 13D40, 13H15}


\maketitle

\begin{center}
{\textit{Dedicated to Professor Watanabe on the occasion of his 80th birthday}}
\end{center}

\begin{abstract}
Let $(R, \mf m)$ be a Noetherian local ring.
This paper concerns several extremal invariants arising from the
study of the relation between colength and (Hilbert--Samuel or Hilbert--Kunz) multiplicity
of an $\mf m$-primary ideal.
We introduce versions of these invariants by restricting to various closures and
``cross-pollinate'' the two multiplicity theories by asking for analogues invariants already established in one of the theories.

On the Hilbert--Samuel side, we prove that the analog of the St{\" u}ckrad--Vogel invariant (that is, the infimum of the ratio between the multiplicity and colength) for integrally closed $\m$-primary ideals is often $1$ under mild assumptions. We also compute the supremum and infimum of the relative drops of multiplicity for (integrally closed) $\m$-primary ideals. On the Hilbert--Kunz side, we study several analogs of the Lech--Mumford and St{\" u}ckrad--Vogel invariants.

\end{abstract}

\section{Introduction}

Let $(R, \mf m)$ be a Noetherian local ring of dimension $d$ and let $I$ be an $\mf m$-primary ideal. The Hilbert--Samuel multiplicity of $I$ is defined as
\[
\eh(I, R) = \lim_{n \to \infty} \frac{d! \length (R/I^n)}{n^{d}}.
\]
We will often abbreviate our notation and write $\eh(I)$ for $\eh(I, R)$ when $R$ is clear from the context and we will write $\eh(R)$ for $\eh(\mf m)$. The multiplicity is a positive integer that measures the singularities of $R$: when $\widehat{R}$ is unmixed\footnote{A Noetherian local ring $(S,\n)$ is unmixed if $S$ is equidimensional and satisfies Serre's condition $\text{S}_1$.}, $R$ is regular if and only if $\eh(R) = 1$, see \cite[Theorem 40.6]{Nagata}. It is closely connected to integral closure of ideals: when $\widehat{R}$ is equidimensional, for a pair of $\m$-primary ideals $I\subseteq J$, we have $\eh(I) = \eh(J)$
if and only if $J \subseteq \overline{I}$, see \cite{Rees1961}.

The starting point of this article is the following inequality of Lech \cite{LechMultiplicity} relating the multiplicity and colength of an $\m$-primary ideal:
\begin{theoremA*}[Lech's inequality]
\label{thm.Lech}
Let $(R,\m)$ be a Noetherian local ring of dimension $d$ and let $I$ be any $\m$-primary ideal of $R$. Then we have $$\eh(I)\leq d!\eh(R)\length(R/I).$$
\end{theoremA*}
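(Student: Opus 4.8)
The plan is to prove the inequality in the equivalent form
$\limsup_{n}\length(R/I^n)/n^d\le \eh(R)\length(R/I)$; multiplying by $d!$ and using the definition of multiplicity recovers $\eh(I)\le d!\,\eh(R)\,\length(R/I)$. First I would make the standard reductions. Completion preserves $d$, both multiplicities, and $\length(R/I)$, so I may assume $R$ is complete; replacing $R$ by $R[t]_{\m R[t]}$, a faithfully flat extension preserving all of these invariants, I may assume the residue field is infinite. Then I fix a minimal reduction $\mathfrak q=(x_1,\dots,x_d)$ of $\m$, a parameter ideal with $\eh(\mathfrak q)=\eh(R)$; by Lech's classical limit formula $\length(R/(x_1^n,\dots,x_d^n))\sim \eh(R)\,n^d$, which will supply the factor $\eh(R)$ as a normalized ``cube count.''

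It is worth isolating why the obvious estimate is too weak, as this dictates the strategy. Writing $\ell=\length(R/I)$, one has $\m^\ell\subseteq I$, hence $I^n\supseteq\m^{\ell n}$ and $\eh(I)\le \eh(\m^\ell)=\ell^d\,\eh(R)$. This misses the target by a factor $\ell^{d-1}$ precisely because the containment $\m^\ell\subseteq I$ is very lossy: $I$ is vastly larger than $\m^\ell$ once $\ell$ is large. The task is therefore to exploit $\length(R/I)=\ell$ directly rather than through $\m^\ell\subseteq I$, and the device that converts the ``cube'' $\ell^d$ into the ``simplex'' $d!\,\ell$ is convexity. Consistently, for $d=1$ there is nothing to gain ($\ell^d=\ell$ and the naive bound is already Lech), so all the content lives in $d\ge 2$.

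The core of the argument is transparent when $R$ is regular. Here I would degenerate $I$ to a monomial ideal via its initial ideal $\inte(I)$ (with respect to a term order), which preserves colength, $\length(R/\inte(I))=\length(R/I)$, and satisfies $\inte(I^n)\supseteq(\inte I)^n$, so $\eh(I)\le\eh(\inte I)$. For a monomial $\m$-primary ideal $J$ with Newton polyhedron $NP(J)$ one has $\eh(J)=d!\operatorname{vol}\big(\RR^d_{\ge 0}\setminus NP(J)\big)$, and the key convex-geometric point is that $\RR^d_{\ge 0}\setminus NP(J)$ lies inside the union of the unit cubes $\mathbf b+[0,1]^d$ indexed by the standard monomials $x^{\mathbf b}\notin J$: if $\mathbf u\notin NP(J)$ then $\lfloor\mathbf u\rfloor$ is forced to be standard, since $NP(J)$ is upward closed and $\mathbf u\ge\lfloor\mathbf u\rfloor$. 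As these cubes have disjoint interiors, $\operatorname{vol}\big(\RR^d_{\ge 0}\setminus NP(J)\big)\le\#\{\text{standard monomials}\}=\length(R/J)$, and therefore $\eh(I)\le\eh(\inte I)\le d!\,\length(R/I)$, which is the case $\eh(R)=1$.

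The main obstacle is upgrading this to an arbitrary, possibly non-Cohen--Macaulay, local ring, where the factor $\eh(R)$ must genuinely enter. The clean target reduces to the estimate $\length(R/I^n)\le \length(R/I)\cdot\length(R/(x_1^n,\dots,x_d^n))\,(1+o(1))$, whose right-hand side is asymptotic to $\eh(R)\,\ell\,n^d$ by the limit formula above. To produce $\eh(R)$ I expect to fix a Noether normalization $A=k[[x_1,\dots,x_d]]\hookrightarrow R$ with $(x_1,\dots,x_d)$ a reduction of $\m$, so that $\eh(R)=\rank_A R$ is the generic rank, and to run the simplex-versus-cubes comparison for the $I$-adic filtration read over $A$. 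Keeping the count simultaneously governed by $\ell$ and by $\rank_A R$ while controlling the non-free part of $R$ over $A$---the Cohen--Macaulay defect---is the delicate point, and is exactly where the general theorem is harder than the regular case treated above.
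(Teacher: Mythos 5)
The paper does not prove Theorem A at all: it is quoted as Lech's classical result with a citation to \cite{LechMultiplicity}, so there is no internal proof to compare against. Judged on its own terms, your proposal has a genuine gap, and you essentially concede it in the last sentence. The regular case is handled convincingly (modulo the facts you invoke: semicontinuity of multiplicity under passage to the initial ideal, the volume formula $\eh(J)=d!\operatorname{vol}(\RR^d_{\ge 0}\setminus NP(J))$ for monomial ideals, and the cube-covering of the complement of the Newton polyhedron by standard monomials --- all correct, though the initial-ideal degeneration requires a coefficient field and so does not cover complete regular local rings of mixed characteristic). But the entire content of Lech's theorem is the passage to a general, possibly non-Cohen--Macaulay, local ring, and for that you offer only a target estimate $\length(R/I^n)\le\length(R/I)\cdot\length(R/(x_1^n,\dots,x_d^n))(1+o(1))$ and a plan to ``run the simplex-versus-cubes comparison for the $I$-adic filtration read over $A$.'' This plan does not obviously make sense: $I$ is an ideal of $R$, not an extended monomial ideal from the Noether normalization $A$, there is no Newton polyhedron attached to it over $A$, and no degeneration device is available in a singular ring to reduce to a combinatorial count. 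Controlling the non-free locus of $R$ over $A$ is not a technical afterthought; it is the theorem.

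For reference, Lech's actual argument runs along different lines: one first proves the limit formula $\eh((x_1,\dots,x_d))=\lim_{\min n_i\to\infty}\length\bigl(R/(x_1^{n_1},\dots,x_d^{n_d})\bigr)/(n_1\cdots n_d)$ for parameter ideals, and then establishes the inequality by induction on $d$, reducing modulo a suitably general parameter element and comparing $\length(R/I)$ with $\length(R/(I+(x)))$; the factor $\eh(R)$ enters through the limit formula rather than through a generic-rank count over a Noether normalization. Your convexity heuristic correctly explains \emph{why} the bound improves from $\ell^d$ to $d!\,\ell$, and your regular-case argument is a legitimate proof of that special case, but as written the proposal does not prove the stated theorem.
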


The problem of improving Lech's inequality by replacing $\eh(R)$ with a smaller constant was raised and explored in \cite{HMQS}. This is partially motivated by \cite{MumfordStability}, where Mumford considered the quantity
\[
\sup_{\substack{\sqrt{I}=\m \\}} \left\{\frac{\eh(I)}{d!\length(R/I)} \right\}
\]
and showed its close connections with singularities on the compactification of the moduli spaces of smooth varieties constructed via Geometric Invariant Theory. Related study of the quantity above from an algebraic point of view appeared in \cite{HMQS} and \cite{MaSmirnovUniformLech}. A more detailed investigation will appear in \cite{MaSmirnovLechMumford}, where we call this quantity the {\it Lech--Mumford constant} of $R$ and denote it by $\lm(R)$. Note that Lech's inequality is simply saying that $\lm(R)\leq \eh(R)$, and by considering powers of an $\m$-primary ideal and taking limit, we have that $\lm(R)\geq 1$ essentially follows from the definition of multiplicity.



Now instead of studying the supremum of the ratio between multiplicity and colength, it is natural to consider the infimum:
\[
\inf\limits_{\sqrt{I}=\m} \left\{ \frac{\eh(I)}{\length(R/I)} \right\}.
\]
This quantity was studied by St\"{u}ckrad and Vogel \cite{StuckradVogel}, who conjectured that this infimum is positive precisely when $\widehat{R}$ is equidimensional. The conjecture was settled in full generality in the affirmative in \cite[Theorem 2.4]{KMQSY}. But inspired by the close relation between multiplicity and integrally closed ideals, it seems natural to also consider such infimum when we restrict ourselves to integrally closed ideals, that is,
\[
\inf\limits_{\sqrt{I}=\m, I=\overline{I}} \left\{ \frac{\eh(I)}{\length(R/I)} \right\}.
\]

While it is not known how to compute the St\"{u}ckrad--Vogel invariant
when the ring is not Cohen-Macaulay, our first main result shows that, somewhat surprisingly, the above infimum is $1$ under mild assumptions on $R$.
\begin{theoremB*}[=Theorem~\ref{thm: inf of integrally closed}]
Let $(R, \mf m)$ be a Noetherian local ring of dimension $d \geq 1$ such that $\widehat{R}$ is equidimensional.
Then we have
\[
\inf_{\sqrt{I} = \mf m, I=\overline{I}} \left\{\frac{\eh(I)}{\length(R/I)}\right\} = 1
\]
if and only if there exists a minimal prime $P$ of $\widehat{R}$ such that $\widehat{R}_P$ is a field.
\end{theoremB*}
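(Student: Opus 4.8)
The plan is to pass to the completion, rewrite everything through the associativity formula for multiplicity, and then isolate a single lower‑bound lemma that drives both implications, leaving a construction as the only genuinely new input.

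First I would reduce to the case $R=\widehat R$. Since colength and Hilbert--Samuel multiplicity are insensitive to completion, it suffices to see that the infimum is unchanged, i.e. that integrally closed $\m$-primary ideals of $R$ and of $\widehat R$ produce the same ratios. Every $\m$-primary ideal $J$ of $\widehat R$ contains some $\m^N\widehat R$ and is therefore extended, $J=(J\cap R)\widehat R$, with $R/(J\cap R)\cong \widehat R/J$; as the contraction of an integrally closed ideal is integrally closed, this gives $\inf_R\le \inf_{\widehat R}$. Conversely, because $\widehat R$ is equidimensional $R$ is formally equidimensional, so integral closure commutes with completion and $I\widehat R$ is integrally closed whenever $I$ is, giving $\inf_R\ge \inf_{\widehat R}$. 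Thus we may assume $R$ complete and equidimensional with minimal primes $P_1,\dots,P_s$, and use $\eh(I,R)=\sum_i \length(R_{P_i})\,\eh(I,R/P_i)$; the hypothesis ``$\widehat R_P$ is a field'' becomes ``$\length(R_{P_i})=1$ for some $i$.''

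The heart of the argument is the inequality $\eh(I)\ge \length(R/I)$ for integrally closed $\m$-primary $I$, together with its reduced refinement. Since the nilradical $N=\overline{(0)}$ lies in $\overline I=I$, we have $\length(R/I)=\length(\red{R}/I\red{R})$, while $\eh(I,R)=\sum_i \length(R_{P_i})\,\eh(I,R/P_i)\ge \sum_i \eh(I,R/P_i)=\eh(I\red{R},\red{R})$ because each $\length(R_{P_i})\ge 1$. Granting the reduced case $\eh(I\red{R},\red{R})\ge \length(\red{R}/I\red{R})$, two consequences are immediate: the infimum is always $\ge 1$; and if every $\length(R_{P_i})\ge 2$, then $\eh(I,R)\ge 2\,\eh(I\red{R},\red{R})\ge 2\length(R/I)$, so the infimum is $\ge 2>1$. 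This already proves the ``only if'' direction by contraposition. To prove the reduced lower bound I would take a minimal reduction $J\subseteq I$, so that $I=\overline J$ by equidimensionality, and show $\length(\red{R}/\overline J)\le \eh(J)$; along a component $R/P_i$ this is immediate when $R/P_i$ is Cohen--Macaulay, since then $\length\big((R/P_i)/\overline{J(R/P_i)}\big)\le \length\big((R/P_i)/J(R/P_i)\big)=\eh(J,R/P_i)$. The main obstacle here is handling the non–Cohen--Macaulay domains and the subadditivity of colength along $\red{R}\hookrightarrow\prod_i R/P_i$.

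For the ``if'' direction I would produce, when $\length(R_{P_1})=1$, integrally closed ideals with ratio tending to $1$. The model is dimension one: if $z$ is a parameter of the domain $R/P_1$, then $I_n=P_1+(z^n)$ has colength $\length\big((R/P_1)/(z^n)\big)=nc$ and multiplicity $\eh(I_n,R)=1\cdot nc+\sum_{i\ge 2}\length(R_{P_i})\,\eh(I_n,R/P_i)=nc+O(1)$, because $P_1(R/P_i)$ is already $\m$-primary on the remaining one-dimensional components, so the ratio is $(nc+O(1))/nc\to 1$. In general I would cut down to this situation by a superficial sequence of general parameters $y_1,\dots,y_{d-1}$: these preserve both colength and multiplicity, and $R/(y_1,\dots,y_{d-1})$ is again equidimensional with a length‑one component over $P_1$, reducing the construction to dimension one. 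The principal difficulty, in both this construction and the lower bound, is controlling integral closure: I must ensure that the lifted ideals stay integrally closed, or at least that passing to their integral closures does not drop the colength below the leading term $nc$, so that the ratio still converges to $1$.
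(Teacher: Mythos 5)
Your skeleton matches the paper's: reduce to the complete equidimensional case, use the inequality $\eh(I)\ge\length(R/I)$ for integrally closed $\m$-primary ideals as the floor, build a sequence of integrally closed ideals with ratio tending to $1$ by cutting down to dimension one, and bound the ratio away from $1$ when no component is generically reduced. One genuine improvement: your ``only if'' direction is cleaner than the paper's. The paper invokes the St\"uckrad--Vogel theorem together with a length comparison for the nilradical from \cite{KMQSY} to get a ratio at least $1+C_1C_2$, whereas your observation that $\length(R_{P_i})\ge 2$ for every minimal prime forces $\eh(I)\ge 2\,\eh(I\red{R},\red{R})\ge 2\,\length(R/I)$ yields the explicit bound $2$ using nothing beyond the associativity formula --- granted the reduced-case inequality.

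That reduced-case inequality is, however, one of two places where your proposal is incomplete in a fixable way. The bound $\eh(I)\ge\length(R/I)$ for integrally closed ideals of a complete equidimensional reduced ring is not delivered by your minimal-reduction sketch: on a non-Cohen--Macaulay component one has $\eh(J,R/P_i)\le\length\bigl((R/P_i)/J(R/P_i)\bigr)$ for a parameter ideal $J$, generally strictly, so the colength of $\overline{J}$ must absorb the Cohen--Macaulay defect; this is precisely \cite[Corollary~12]{MQS}, which the paper cites as a black box and which you flag but do not prove. Likewise, in your dimension-one model the ideal $I_n=P_1+(z^n)$ is typically not integrally closed (already for $R/P_1=k[[t^2,t^3]]$ with $z=t^2$), and passing to $\overline{I_n}$ moves the ratio the wrong way unless you also show $\length(R/\overline{I_n})\ge nc-O(1)$; the paper does this via Rees's theorem $\overline{\m^{kn}}=(\overline{\m^{k}})^n$ in Lemma~\ref{lem: one dimensional complete domain} (a conductor argument in the normalization would also work).

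The serious gap is the induction step. You assert that a general (superficial) parameter $y$ makes $R/(y)$ equidimensional with a minimal prime whose localization is a field. This is a local Bertini statement, not a formal consequence of genericity: you need $V(y)$ to meet the regular locus of $R$ transversally along some height-one prime $Q\supseteq P_1$, i.e., $y$ must generate $QR_Q$ at a $Q$ where $R_Q$ is regular, and a generic parameter can fail this (e.g.\ $y$ could vanish to order two along every such component). The paper's Lemma~\ref{lem.Flenner} secures exactly this from Flenner's local Bertini theorem, and I do not see how to avoid an input of that strength. Separately, ``superficial elements preserve colength and multiplicity'' is not the right mechanism here: the correct move is to construct the ideals in $S=R/(y)$ and take preimages $J_n$ in $R$, for which $\length(R/J_n)=\length(S/I_n)$ exactly and $\eh(J_n,R)\le\eh(J_n,S)$ because $y\in J_n$, which is all the upper bound requires.
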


Inspired by the relation between F-signature and the relative drop of Hilbert--Kunz multiplicity (see \cite{WatanabeYoshidaMin,PolstraTucker}), we next consider the infimum of the relative drops of Hilbert--Samuel multiplicities for integrally closed ideals
\[\inf_{\substack{I \subsetneq J, \sqrt{I} = \mf m \\ I = \overline{I}, J=\overline{J}}} \left \{\frac{\eh (I) - \eh (J)}{\length (R/I) - \length (R/J)}  \right\}.\]
We show that, similar to Theorem B above, this quantity is often $1$ under mild assumptions, see Theorem~\ref{thm: inf of relative drop of integrally closed}. On the other hand, we also show that, in stark contrast to Lech's inequality (i.e., $\lm(R)\leq \eh(R)$) and the behavior of Hilbert--Kunz multiplicity (see Proposition~\ref{prop: sup relative eHK}), the supremum of the relative drops of Hilbert--Samuel multiplicities for all (integrally closed) $\m$-primary ideals is often $\infty$, i.e., there is no {\it relative} Lech's inequality.

\begin{theoremC*}[=Proposition~\ref{prop: sup in dim at least two}]
Let $(R,\m)$ be a Noetherian local ring of dimension $d\geq 2$. Then we have
$$\sup_{\substack{I\subsetneq J,  \sqrt{I}=\m}} \left\{\frac{\eh (I) - \eh (J)}{\length (R/I) - \length (R/J)} \right\} = \infty.$$
If, in addition, $R/\m$ is algebraically closed, then
$$ \sup_{\substack{I\subsetneq J, \sqrt{I}=\m \\ I=\overline{I}, J=\overline{J}}} \left\{\frac{\eh (I) - \eh (J)}{\length (R/I) - \length (R/J)} \right\} = \infty.$$
\end{theoremC*}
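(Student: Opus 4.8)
The plan is to produce, for each constant $M$, a pair of $\m$-primary ideals $I\subsetneq J$ whose relative drop exceeds $M$; letting $M\to\infty$ then gives both suprema equal to $\infty$. Throughout fix a system of parameters $x_1,\dots,x_d$ and set $\mathfrak q=(x_1,\dots,x_d)$.

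For the first supremum the guiding model is a two-dimensional regular local ring, where for $N>c+1$ the monomial ideals $I_c=(x_1^{c+1},x_1^cx_2,x_2^N)$ and $J_c=(x_1^c,x_2^N)$ satisfy, by the Newton--polygon area formula, $\eh(I_c)=cN+c+1$, $\eh(J_c)=cN$, and $\length(J_c/I_c)=1$; hence the relative drop equals $c+1\to\infty$. In arbitrary dimension $d\ge2$ I would take
\[
I_c=(x_1^{c+1},\,x_1^cx_2,\,x_2^N,\,x_3,\dots,x_d),\qquad J_c=(x_1^c,\,x_2^N,\,x_3,\dots,x_d)=I_c+(x_1^c),
\]
with $N\gg c$. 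Then $J_c$ is a parameter ideal, so Lech's product formula gives $\eh(J_c)=cN\,\eh(\mathfrak q)$; and because $\m\,x_1^c\subseteq I_c$ the passage from $J_c$ to $I_c$ deletes a single socle class, giving $J_c/I_c\cong R/(I_c\colon x_1^c)=R/\m$ and therefore $\length(R/I_c)-\length(R/J_c)=1$, provided $x_1^c\notin I_c$.

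Two things must then be verified: that $x_1^c\notin I_c$, and that $\eh(I_c)-\eh(J_c)\to\infty$. Colengths are unproblematic, since $R/I_c=\overline R/\overline{I_c}$ for $\overline R=R/(x_3,\dots,x_d)$, a ring of dimension $2$ (as $x_3,\dots,x_d$ is part of a system of parameters), reducing the colength count to the two-dimensional monomial picture. The multiplicity is the delicate point: when $R$ is not Cohen--Macaulay, $x_3,\dots,x_d$ need not be a regular sequence, so $\eh(I_c,R)\ne\eh(\overline{I_c},\overline R)$ in general. I would route around this with the associativity formula, writing $\eh(-,R)$ as a sum over the top-dimensional minimal primes of $\widehat R$ and reducing to the equidimensional case, where a direct estimate of $\length(J_c^m/I_c^m)$ (equivalently the two-dimensional area computation) yields $\eh(I_c)-\eh(J_c)\ge c\,\eh(\mathfrak q)$. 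The main obstacle here is exactly this non-Cohen--Macaulay bookkeeping: ensuring the linear-in-$c$ multiplicity drop survives the reduction and that $x_1^c\notin I_c$.

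For the second supremum, $I$ and $J$ must be integrally closed without spoiling the ratio, and the low-dimensional intuition actively misleads: in a two-dimensional regular ring the Hoskin--Deligne length formula forces $\eh(I)-\eh(J)\le 2\bigl(\length(R/I)-\length(R/J)\bigr)$ for integrally closed pairs, so the relative drop is bounded there and no analogue of the first construction can diverge. The unbounded behaviour is instead a higher-dimensional lattice phenomenon. For a monomial integrally closed ideal $K$ one has $\eh(K)=d!\operatorname{vol}(Q_K)$ and $\length(R/K)=\#(Q_K\cap\mathbb Z^d)$, where $Q_K$ is the region below the staircase, so the relative drop of a nested pair is $d!\operatorname{vol}(W)/\#(W\cap\mathbb Z^d)$ with $W=Q_I\setminus Q_J$. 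In dimension $2$ Pick's theorem keeps this $\le 2$, but for $d\ge3$ Pick fails---there are lattice polytopes (Reeve-type) of arbitrarily large volume containing only a bounded number of lattice points---so choosing $W$ to realize such a polytope produces a multiplicity drop of order $d!\operatorname{vol}(W)$ against a bounded colength drop. I would build the two integrally closed ideals from $x_1,\dots,x_d$, use the hypothesis that $R/\m$ is algebraically closed to realize them as honest complete ideals after passing to (and descending from) a toric/regular model, and again transport the computation to general $R$ by associativity. The main obstacle for this part is enforcing integral closedness on both $I$ and $J$ simultaneously while keeping the colength drop bounded as the multiplicity drop diverges---that is, genuinely realizing the volume-versus-lattice-point deficiency inside a nested pair of complete ideals.
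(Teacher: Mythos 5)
Your strategy for the first supremum---a nested pair whose colength drop stays bounded while the multiplicity drop grows---is the same as the paper's, but the execution leaves the one genuinely hard point unresolved, and you correctly identify it yourself: for an arbitrary system of parameters in a non-regular local ring there is no Newton-polygon formula for $\eh(I_c)$. The associativity formula reduces you to a complete local domain, but there the images of $x_1,\dots,x_d$ are still only a system of parameters, not coordinates, and the lower bound $\eh(I_c)-\eh(J_c)\geq c\,\eh(\mf q)$ (indeed, even the nonvanishing of the drop, i.e.\ $x_1^c\notin\overline{I_c}$) is precisely what needs proof; ``a direct estimate of $\length(J_c^m/I_c^m)$'' is not supplied and is not routine. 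The paper sidesteps this entirely via Cohen's structure theorem: it chooses a module-finite extension $A\to R$ with $A$ regular, performs the (easy) computation for $\mf a=\m_A^n$ and $\mf b=\m_A^n+(x^{n-1})$ inside $A$, and transfers to $R$ by $\eh(\mf a R)=\rank_A(R)\,\eh(\mf a)$; the colength drop is then bounded by the constant $\length(R/\m_A R)$ rather than being exactly $1$. Note also that your claim $J_c/I_c\cong R/\m$ requires $\m x_1^c\subseteq I_c$, which fails unless $x_1,\dots,x_d$ actually generate $\m$.

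The second part of your proposal rests on a false premise. The Hoskin--Deligne formula does \emph{not} yield $\eh(I)-\eh(J)\leq 2\bigl(\length(R/I)-\length(R/J)\bigr)$ for nested complete ideals in a two-dimensional regular local ring: the point bases of $I\subseteq J$ need not be termwise comparable, since $J$ can acquire infinitely near base points that $I$ lacks. Concretely, in $k[[x,y]]$ take $I=\m^n\subseteq J=\overline{(x^{n-1},y^n)}$; then $\length(R/I)-\length(R/J)=1$ while $\eh(I)-\eh(J)=n^2-n(n-1)=n$, so the divergence is already a two-dimensional phenomenon. Consequently your Reeve-polytope mechanism, which exists only for $d\geq 3$, cannot prove the statement in the case $d=2$, and even for $d\geq 3$ it would face the same transfer-to-general-$R$ issues as the first part, compounded by the need to keep both ideals integrally closed. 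The paper's argument is uniform in $d\geq 2$: reduce to a complete local domain, set $I=\overline{\m^n}$, use Proposition~\ref{prop: improved Watanabe} (this is where the algebraically closed residue field enters) to produce an integrally closed $J=I+(x)$ with $\length(J/I)=1$, and then show for a general element $x'$ of $\m^n+(x)$ that $\eh(I)-\eh(J)=n^{d-1}\bigl(n\eh(R)-\eh(R/(x'))\bigr)\geq n^{d-1}$, the bracketed integer being positive by Rees's multiplicity theorem.
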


We summarize our findings on the comparison between Hilbert--Samuel multiplicity and colength of $\m$-primary ideals in the following table. These results are proved in Section 3. To avoid triviality or tautology, we shall always assume that $\dim(R)\geq 1$ in our study of multiplicities since when $\dim(R)=0$, the multiplicity of any ideal is $\length(R)$ and the invariants we studied are not interesting or not meaningful.

\vspace{.4cm}

\begin{tabular}{ |p{3.7cm}||p{4.5cm}|p{5cm}| }
 \hline
 \multicolumn{3}{|c|}{Hilbert--Samuel multiplicity vs. colength} \\
 \hline
  & All ideals $I$, $J$ & Integrally closed ideals $I$, $J$\\
 \hline
 \hline
 $ \sup\limits_{\sqrt{I}=\m} \frac{\eh(I)}{d!\length(R/I)}$   & $\lm(R) \in [1, \eh(R)]$\footnote{In most cases, $\lm(R)<\eh(R)$ when $\dim(R)\geq 2$. In \cite{MaSmirnovUniformLech}, we completely characterized when $\lm(R)=\eh(R)$.}    & $\lm(R) \in [1, \eh(R)]$  \\
 \hline
  $\inf\limits_{\sqrt{I}=\m} \frac{\eh(I)}{\length(R/I)}$  & $1/n(R) \in [0,1]$\footnote{As we already mentioned, $\widehat{R}$ is equidimensional if and only if $n(R)<\infty$ (or equivalently, $1/n(R)>0$), see \cite[Theorem 2.4]{KMQSY}.} & often 1, Theorem~\ref{thm: inf of integrally closed} \\
 \hline
 $\sup\limits_{I\subsetneq J, \sqrt{I}=\m} \frac{\eh(I) - \eh(J)}{\length(R/I) - \length (R/J)}$ &  often $\infty$, Proposition~\ref{prop: sup in dim at least two}  & often $\infty$, Proposition~\ref{prop: sup in dim at least two} \\
 \hline
  $\inf\limits_{I \subsetneq J, \sqrt{I}=\m} \frac{\eh(I) - \eh(J)}{\length(R/I) - \length (R/J)}$ & 0 unless $R$ is a DVR & often 1, Theorem~\ref{thm: inf of relative drop of integrally closed} \\
 \hline
\end{tabular}

\vspace{.4cm}

In Section 4, we study the comparision between Hilbert--Kunz multiplicity and colength of $\m$-primary ideals for Noetherian local rings of prime characteristic $p>0$. As Hilbert--Kunz multiplicity has close relation with tight closure \cite[Theorem 8.17]{HochsterHuneke1}, besides considering various suprema and infima for integrally closed ideals, we also consider them when we restrict ourselves to tightly closed ideals.

Recall that if $(R, \mf m)$ is a Noetherian local ring of prime characteristic $p>0$ and dimension $d$, and $I$ is an $\mf m$-primary ideal, then the Hilbert--Kunz multiplicity of $I$ is defined as
\[
\ehk(I, R) = \lim_{n \to \infty} \frac{\length (R/I^{[p^e]})}{p^{ed}}.
\]
The limit above exists is a result of Monsky \cite{Monsky}. We will often abbreviate our notation and write $\ehk(I)$ for $\ehk(I, R)$ when $R$ is clear from the context and we will write $\ehk(R)$ for $\ehk(\mf m)$. Our results on the comparison between Hilbert--Kunz multiplicity and colength are summarized in the next table.

\vspace{.4cm}

\begin{tabular}{ |p{3.7cm}||p{3.4cm}|p{3.7cm}|p{4.1cm}| }
 \hline
 \multicolumn{4}{|c|}{Hilbert--Kunz multiplicity vs. colength} \\
 \hline
 & All ideals & Tightly closed ideals & Integrally closed ideals\\
 \hline
 \hline
 $\sup\limits_{\sqrt{I}=\m} \frac{\ehk(I)}{\length(R/I)}$  & $\ehk(R) \in [1, \eh(R)]$    & $\ehk(R)\in [1, \eh(R)]$ & $\ehk(R)\in [1, \eh(R)]$ \\
 \hline
  $\inf\limits_{\sqrt{I}=\m} \frac{\ehk(I)}{\length(R/I)}$   & ? see Remark~\ref{rmk: inf of eHK}  & ? see Remark~\ref{rmk: inf of eHK} & often 1, Proposition~\ref{prop: inf integrally closed Hilbert-Kunz} \\
 \hline
 $\sup\limits_{I\subsetneq J, \sqrt{I}=\m} \frac{\ehk(I) - \ehk(J)}{\length(R/I) - \length (R/J)}$ &   $\ehk(R)\in [1, \eh(R)]$, \, \ Proposition~\ref{prop: sup relative eHK} & ? see Corollary~\ref{cor: tight sup relative eHK}, \, \  Remark~\ref{rmk: sup eHK tc and ic} & ? see Remark~\ref{rmk: sup eHK tc and ic} \\
 \hline
  $\inf\limits_{I \subsetneq J, \sqrt{I}=\m} \frac{\ehk(I) - \ehk(J)}{\length(R/I) - \length (R/J)}$ & $\fsig(R)\in [0, 1]$ & ? see Question~\ref{question: infimum positive tc}, \, \ Corollary~\ref{cor: tight inf relative eHK sop} & often 1, Proposition~\ref{prop: inf integrally closed Hilbert-Kunz}\\
 \hline
\end{tabular}

\vspace{.4cm}

As one can see from this table, the first invariant, the analog of Lech--Mumford constant, turns out to be not so interesting as it always coincides with $\ehk(R)$. This is essentially well-known due to \cite[Lemma 4.2]{WatanabeYoshida} (i.e., the supremum is always achieved when $I=\m$). The same is true for the supremum of the relative drops of Hilbert--Kunz multiplicities if we consider all $\m$-primary ideals. Moreover, if we restrict ourselves to integrally closed ideals, then we show in Proposition~\ref{prop: inf integrally closed Hilbert-Kunz} that both infima are $1$ under mild assumptions. This follows almost immediately from our methods in the proofs of Theorem~\ref{thm: inf of integrally closed} and Theorem~\ref{thm: inf of relative drop of integrally closed}.

Beyond the discussion above, it seems that all other suprema and infima are interesting invariants that measure the singularities of $R$ in someway, and our knowledge about them is minimal. For example, the infimum of the relative drops of Hilbert--Kunz multiplicities of all ideals is known to be the F-signature of $R$, $\fsig(R)$, see \cite{PolstraTucker} (which partially motivates our study, as already mentioned above), and in general this invariant is known to be rather difficult to compute. It is known, however, that $\fsig(R)$ is positive if and only if $R$ is strongly F-regular so $\fsig(R)$ does not distinguish between non strongly F-regular singularities. On the other hand, if we restrict ourselves to tightly closed ideals in the relative drop of Hilbert--Kunz multiplicity, then we suspect that one always obtains a positive number, see Question~\ref{question: infimum positive tc}. In this direction, we prove the following result which can be viewed as a generalization of the main result of \cite{HochsterYao} (our method relies on \cite{HochsterYao} though).

\begin{theoremD*}[=Corollary~\ref{cor: tight inf relative eHK sop}]
Let $(R,\m)$ be an excellent, reduced and equidimensional local ring of prime characteristic $p>0$. Then we have
\[
\inf_{I=(\ul x)^*\subsetneq J=J^*} \left \{
\frac{\ehk (I) - \ehk (J)}{\length (R/I) - \length (R/J)}
\right \}
> 0,
\]
where $I=(\ul x)^*\subsetneq J=J^*$ means that $I$ is the tight closure of some system of parameters $(\ul x)$ of $R$ (i.e., we are allowed to vary the system of parameters).
\end{theoremD*}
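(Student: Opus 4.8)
The plan is to produce a single constant $\epsilon>0$, depending only on $R$, that bounds every ratio in the infimum from below. First I would simplify the two sides. Since Hilbert--Kunz multiplicity is unchanged by tight closure, $\ehk(I)=\ehk((\ul x)^*)=\ehk((\ul x))$; and since $R$ is excellent and equidimensional, $\widehat R$ is equidimensional, hence $\ehk((\ul x))=\eh((\ul x))$ for every system of parameters, which keeps the numerator under control. The denominator is simply $\length(R/I)-\length(R/J)=\length(J/I)$, a positive integer because $I\subsetneq J$. Thus the theorem is equivalent to the existence of $\epsilon>0$, independent of the system of parameters $\ul x$ and of the tightly closed ideal $J\supsetneq (\ul x)^*$, such that
\[
\ehk((\ul x))-\ehk(J)\ \ge\ \epsilon\,\length\big(J/(\ul x)^*\big).
\]

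Next I would rewrite the Hilbert--Kunz drop as a limit of Frobenius colengths: with $q=p^e$,
\[
\ehk((\ul x))-\ehk(J)=\lim_{e\to\infty}\frac{\length\big(J^{[q]}/(\ul x)^{[q]}\big)}{q^{d}},
\]
where each $(\ul x)^{[q]}=(x_1^{q},\dots,x_d^{q})$ is again a parameter ideal and $(\ul x)^{[q]}\subseteq J^{[q]}$. The heart of the argument is a uniform linear lower bound
\[
\length\big(J^{[q]}/(\ul x)^{[q]}\big)\ \ge\ \epsilon\, q^{d}\,\length\big(J/(\ul x)^*\big)
\]
holding for infinitely many $q$, uniformly in $\ul x$ and in $J$. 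This is where I would invoke \cite{HochsterYao}: because $R$ is excellent and reduced it has a completely stable test element $c$, and for any $y\in J\setminus (\ul x)^*$ we have $cy^{q}\notin (\ul x)^{[q]}$ for infinitely many $q$, so such elements detect colength in $J^{[q]}/(\ul x)^{[q]}$. The Hochster--Yao estimates quantify this detection \emph{uniformly in the parameter ideal}, upgrading the single-element observation to the displayed linear bound; since the Hilbert--Kunz limit exists, a subsequential bound of order $q^d$ suffices to pass to the limit. Organizing the passage from a single outside element to a full composition series of $J/(\ul x)^*$, so that the individual contributions genuinely add up to order $q^d$ per step rather than overlapping, is the combinatorial core.

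The main obstacle is precisely this uniformity: for a fixed $\ul x$ the positivity of the gap $\ehk((\ul x))-\ehk(J)$ is essentially the content of the Hochster--Yao theorem, but here both $(\ul x)^*$ and all the relevant Frobenius colengths vary with the system of parameters, so $\epsilon$ cannot be extracted from any single parameter ideal and must instead be read off from data that is uniform across all $\ul x$, namely the fixed test element $c$ together with the excellence and equidimensionality hypotheses (the latter also furnishing $\ehk=\eh$ for parameters). I expect the technical crux to be showing that each element outside $(\ul x)^*$ contributes at least on the order of $\epsilon\, q^{d}$ to $\length(J^{[q]}/(\ul x)^{[q]})$ with $\epsilon$ governed only by $c$; granting this estimate, summing over a composition series of $J/(\ul x)^*$ and dividing by $\length(J/(\ul x)^*)$ yields the uniform bound, and hence the positivity of the infimum.
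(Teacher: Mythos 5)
There is a genuine gap. Your whole argument rests on the claim that each element of $J\setminus(\ul x)^*$ contributes at least $\epsilon q^{d}$ to $\length\bigl(J^{[q]}/(\ul x)^{[q]}\bigr)$ with $\epsilon$ ``governed only by'' a fixed test element $c$, but the test element gives nothing of the sort: from $cy^{q}\notin(\ul x)^{[q]}$ for infinitely many $q$ you only get $\length\bigl(((\ul x)^{[q]},y)/(\ul x)^{[q]}\bigr)\geq 1$, i.e.\ a contribution of order $1$, not of order $q^{d}$. Upgrading this to a positive gap $\ehk((\ul x))-\ehk(J)>0$ is already the content of \cite[Theorem~3.1]{HochsterYao} for a \emph{fixed} parameter ideal, and nothing in that theorem is uniform over all systems of parameters; that uniformity is exactly the assertion to be proved, so invoking ``the Hochster--Yao estimates'' here is circular. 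The composition-series step is also obstructed: a composition series of $J/(\ul x)^*$ passes through intermediate ideals $I_k$ that need not be tightly closed, and for such a step one can have $\ehk(I_k)-\ehk(I_{k+1})=0$ (whenever $I_{k+1}\subseteq I_k^*$), so the telescoped drops do not ``add up'' to anything of order $\length(J/(\ul x)^*)$ per step. (Unlike the integrally closed case, where Proposition~\ref{prop: improved Watanabe} supplies chains of integrally closed ideals, there is no analogous chain of tightly closed ideals.)

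The paper takes a different and essential route to the uniformity. First, Proposition~\ref{prop: go socle} reduces to socle ideals, i.e.\ to $J$ with $\m J\subseteq(\ul x)^*$. Then Proposition~\ref{prop: go local cohomology} identifies each quotient $J/(\ul x)^*$, together with its relative Hilbert--Kunz multiplicity, with a submodule $L/T$ of $H/T$ where $H=\lc^d_{\m}(R)$ and $T=0^*_H$; after the socle reduction all such $L/T$ land inside the \emph{fixed finite-dimensional} vector space $(T:_H\m)/T$, and part (3) of that proposition produces a \emph{single} system of parameters $\ul y$ realizing every such $L$. This collapses the infimum over all systems of parameters to the infimum for one fixed tightly closed ideal $(\ul y)^*$, where positivity follows from Proposition~\ref{prop: inf for fixed tc I} by the crude bound $\bigl(\ehk((\ul y)^*)-\ehk(J)\bigr)/\length\bigl(R/(\ul y)^*\bigr)\geq\delta/\length\bigl(R/(\ul y)^*\bigr)$ — a bound that only works because $(\ul y)^*$ is now fixed. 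If you want to salvage your approach, you would need to supply the local cohomology correspondence (or an equivalent device) to eliminate the dependence on $\ul x$; the test element alone will not do it.
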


For the other invariants (those marked with $?$ in the table), we collect some partial results and observations, and include some intriguing examples. In sum, our results on Hilbert--Kunz multiplicity and colength are less satisfactory, which partially justifies the philosophy that Hilbert--Kunz multiplicity is a more subtle invariant and is generally harder to study than Hilbert--Samuel multiplicity.

\subsection*{Acknowledgement}
We thank Keiichi Watanabe for asking what can be said about the relative Hilbert--Kunz multiplicity of integrally closed ideals. We are happy to report our answer in Proposition~\ref{prop: inf integrally closed Hilbert-Kunz}. The third author thanks Kriti Goel for conversations regarding this paper.

The first author is partially supported by NSF Grants DMS \#1901672, \#2302430, NSF FRG Grant \#1952366, and a fellowship from the Sloan Foundation. The second author is partially supported by a fund of Vietnam National Foundation for Science and Technology Development (NAFOSTED) under grant number 101.04-2023.08. The third author was supported by the Spanish Ministry of Science, Innovation and Universities through
the fellowship RYC2020-028976-I funded by MICIU/AEI/10.13039/501100011033 and El ESF ``ESF Investing in your future'',
the grant PID2021-125052GA-I00 funded by MICIU/AEI/10.13039/501100011033 and FEDER, UE,
and the grant EUR2023-143443 funded by MICIU/AEI/10.13039/501100011033 and the European Union NextGenerationEU/PRTR.

A part of this work was done during the third author's visit to Purdue University
and he thanks the Department of Mathematics for hospitality. This paper was finished while the second author visited the Vietnam Institute for Advanced Study in Mathematics (VIASM), he would like to thank the VIASM for the very kind support and hospitality.

\section{Preliminaries}

Throughout this manuscript, a ring $R$ will always be commutative and with multiplicative identity. Recall that an element $x\in R$ is integral over an ideal $I\subseteq R$ if it satisfies an equation of the form $x^n+ a_1x^{n-1}+\cdots + a_{n-1}x + a_n=0$, where $a_k\in I^k$. The set of all elements integral over $I$ forms an ideal and is denoted by $\overline{I}$, called the integral closure of $I$. An ideal $I\subseteq R$ is called integrally closed if $I=\overline{I}$.

We now list some basic properties of integral closure and integrally closed ideals. These properties will be freely used without further referencing.

\begin{enumerate}
\item The nilradical of $R$ is contained in the integral closure of any ideal $I$ (\cite[page 2]{SwansonHuneke}).
\item If $R\to S$ is any ring homomorphism and $I$ is an integrally closed ideal of $S$, then the pre-image (or contraction) of $I$ in $R$ is an integrally closed ideal in $R$ (\cite[page 3]{SwansonHuneke}).
\item An element $x\in R$ is integral over $I$ if and only if the image of $x$ in $R/\mf p$ is integral over $I(R/\mf p)$ for all minimal primes $\mf p$ (\cite[Proposition 1.1.5]{SwansonHuneke}).
\item If $R\to S$ is an integral extension, then $\overline{IS}\cap R=\overline{I}$ (\cite[Proposition 1.6.1]{SwansonHuneke}).
\item Let $(R,\m)$ be a Noetherian local ring. An $\m$-primary ideal $I$ is integrally closed in $R$ if and only if $I\widehat{R}$ is integrally closed in $\widehat{R}$ (\cite[Lemma 9.1.1]{SwansonHuneke}). In particular, there is a one-to-one correspondence between integrally closed $\m$-primary ideals in $R$ and $\widehat{R}$.
\item Let $(R,\m)$ be a Noetherian local ring with $R/\m$ infinite, then every $\m$-primary ideal $I$ is integral over an ideal $J$ generated by a system of parameters, such $J$ is called a minimal reduction of $I$ (\cite[Proposition 8.3.7 and Corollary 8.3.9]{SwansonHuneke}).
\item Let $(R,\m)$ be a Noetherian local ring and let $I, J$ be two $\m$-primary ideals that have the same integral closure. Then $\eh(I)=\eh(J)$ (\cite[Proposition 11.2.1]{SwansonHuneke}).
\end{enumerate}

We next assume that $R$ is a Noetherian ring of prime characteristic $p>0$. The tight closure of an ideal $I\subseteq R$, introduced by Hochster--Huneke \cite{HochsterHuneke1}, is defined as follows:
$$I^* := \{x\in R \,\ | \,\ \text{there exists $c\in R-\cup_{\mf p\in\Min(R)}\mf p$ such that $cx^{p^e}\in I^{[p^e]}$ for all $e\gg0$}\}.$$
An ideal $I\subseteq R$ is called tightly closed if $I=I^*$. In general, tight closure is always contained in the integral closure, i.e., $I^*\subseteq\overline{I}$ (\cite[Theorem 1.3]{Huneke}). Similar to integral closure, an element $x\in R$ is in the tight closure of $I$ if and only if the image of $x$ in $R/\mf p$ is in the tight closure of $I(R/\mf p)$ for all minimal primes $\mf p$ (\cite[Theorem 1.3]{Huneke}). If $(R,\m)$ is a Noetherian local ring and $I, J$ are two $\m$-primary ideals that have the same tigtht closure, then $\ehk(I)=\ehk(J)$ by \cite[Theorem 8.17]{HochsterHuneke1}. If $\dim(R)=d$, then $\eh(I)/d!\leq \ehk(I)\leq \eh(I)$, so in particular, $\ehk(I)=\eh(I)$ if $d=1$. We record the following lemma which was essentially proven in \cite[Corollary~5.8]{HochsterHuneke1}, but we slightly extend the level of generality.

\begin{lemma}
\label{lem: tc and ic in dimension one}
Let $(R,\m)$ be a one-dimensional Noetherian local ring of prime characteristic $p>0$. Then $I^*=\overline{I}$ for every ideal $I\subseteq R$.
\end{lemma}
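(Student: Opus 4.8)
The plan is to recall that $I^*\subseteq\overline I$ holds for every ideal, so that the whole content is the reverse inclusion $\overline I\subseteq I^*$. Both operations are detected modulo the minimal primes: by property~(3) we have $x\in\overline I$ iff the image of $x$ in $R/\mathfrak p$ lies in $\overline{I(R/\mathfrak p)}$ for every $\mathfrak p\in\Min(R)$, and by the analogous property for tight closure (stated above, following \cite[Theorem 1.3]{Huneke}) we have $x\in I^*$ iff the image of $x$ in $R/\mathfrak p$ lies in $(I(R/\mathfrak p))^*$ for every $\mathfrak p\in\Min(R)$. Hence it suffices to prove $\overline J=J^*$ for every ideal $J$ in each quotient $R/\mathfrak p$, which is a Noetherian local domain of dimension $\le 1$. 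The dimension-zero case is a field and is trivial, so I may assume that $R$ is a one-dimensional Noetherian local domain and that $J=I$ is a nonzero (hence $\m$-primary) ideal.

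The argument then splits into two independent steps. First, an elementary ``plus closure sits inside tight closure'' step: if $S$ is any module-finite extension domain of $R$ and $x\in R\cap IS$, then $x\in I^*$. Indeed, $S$ is a finitely generated torsion-free $R$-module, so $\Hom_R(S,R)\ne 0$; choose a nonzero $R$-linear map $\varphi\colon S\to R$ and an element $s_0\in S$ with $\varphi(s_0)=c\ne 0$. For $x\in IS$ one has $x^q\in (IS)^{[q]}=I^{[q]}S$ for all $q=p^e$, and since $x^q\in R$ and $\varphi$ is $R$-linear, $c\,x^q=\varphi(x^q s_0)\in I^{[q]}$ for all $q$. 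As $c\ne 0$ in the domain $R$, this is precisely the statement $x\in I^*$.

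Second, I would show that every $x\in\overline I$ lies in $IS$ for some module-finite extension domain $S$. Let $\overline R$ be the integral closure of $R$ in its fraction field. By the Krull--Akizuki theorem, $\overline R$ is a one-dimensional Noetherian normal domain, i.e.\ a Dedekind domain, \emph{even when $\overline R$ is not module-finite over $R$}. In a Dedekind domain every nonzero ideal is invertible, hence integrally closed, so $I\overline R$ is integrally closed; combined with $\overline{I\overline R}\cap R=\overline I$ (property~(4) applied to the integral extension $R\hookrightarrow \overline R$), this yields $\overline I=I\overline R\cap R$. Thus any $x\in\overline I\subseteq I\overline R$ can be written $x=\sum_{i} b_i f_i$ with $f_1,\dots,f_\mu$ generators of $I$ and finitely many $b_i\in\overline R$; setting $S=R[b_1,\dots,b_\mu]$ gives a module-finite extension domain with $x\in IS$. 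Feeding this into the first step produces $x\in I^*$, which establishes $\overline I\subseteq I^*$ and completes the proof.

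The one genuinely delicate point, and the place where the extension in generality over \cite[Corollary~5.8]{HochsterHuneke1} is needed, is that $R$ is not assumed excellent, so its normalization $\overline R$ may fail to be module-finite and the conductor $(R:_R\overline R)$ may vanish; a naive denominator-clearing or conductor argument then breaks down. This is exactly circumvented by invoking Krull--Akizuki to retain that $\overline R$ is Dedekind (so that $I\overline R$ is integrally closed) and by passing only to the finite subextension $S=R[b_i]$ attached to the particular element $x$, rather than to all of $\overline R$. Everything else is a routine assembly of the listed properties of integral and tight closure.
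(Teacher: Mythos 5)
Your proof is correct, but it takes a genuinely different route from the one in the paper. The paper's argument works directly with the (possibly non-reduced) ring $R$: when $R/\m$ is infinite it picks a minimal reduction $(x)$ of $I$ and gets $\overline{I}=\overline{(x)}=(x)^*\subseteq I^*$ from the tight closure Brian\c{c}on--Skoda theorem, and when $R/\m$ is finite it first passes to a finite free local extension $R'=R[T]/(F(T))$ with larger residue field and then contracts, using $\overline{IR'}\cap R=(IR')^*\cap R\subseteq I^*$ via \cite[Theorem 5.22]{HochsterHunekeJAG}. You instead reduce to the domain case by checking both closures modulo minimal primes, and then combine two standard ingredients: the ``plus closure sits inside tight closure'' argument (a nonzero $R$-linear functional on a module-finite extension domain supplies the test element $c$), and the Krull--Akizuki theorem, which guarantees that the normalization $\overline{R}$ is Dedekind even when it is not module-finite over $R$, so that $\overline{I}=I\overline{R}\cap R$ and each element of $\overline{I}$ already lies in $IS$ for a module-finite subextension $S=R[b_1,\dots,b_\mu]$. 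Your identification of the non-excellence of $R$ as the delicate point is exactly right, and Krull--Akizuki is a clean way around it; the trade-off is that you invoke normalization and Krull--Akizuki where the paper invokes Brian\c{c}on--Skoda, minimal reductions, and the residue-field-extension gadget (which the paper reuses elsewhere, e.g.\ in Lemma~\ref{lem: gonflement}). Both arguments are complete; yours is arguably more self-contained for domains, while the paper's handles the non-reduced ring without passing to quotients.
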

\begin{proof}
If $R/\m$ is infinite, then $I$ is integral over a principal ideal $(x)$. Thus we know that
$$\overline{I}=\overline{(x)}=(x)^*\subseteq I^*\subseteq \overline{I}$$
where the second equality above follows from the tight closure Brian\c{c}on--Skoda theorem, see \cite[Theorem 2.7]{Huneke}.

Now we assume $k=R/\m$ is finite. Since every finite field extension $k\to k'$ is separable, we know that $k'\cong k[T]/f(T)$ for a monic polynomial $f(T)$. Let $F(T)\in R[T]$ be a monic polynomial that is a lift of $f(T)$ and consider $R':=R[T]/(F(T))$. Then $R\to R'$ is a finite free extension of Noetherian local rings with $\m R'$ the unique maximal ideal of $R'$,  and for sufficiently large $k'$, $IR'$ is integral over a principal ideal $(x)\subseteq R'$. The argument in the previous paragraph then shows that $\overline{IR'}=(IR')^*$. Thus we have that
$$\overline{I}=\overline{IR'}\cap R = (IR')^*\cap R\subseteq I^*$$
where the last inclusion follows from \cite[Theorem 5.22]{HochsterHunekeJAG}. Therefore $I^*=\overline{I}$ as wanted.
\end{proof}

We will often denote the target ring of the $e$-th iterates of the Frobenius endomorphism of $R$ as $F^e_*R$. $R$ is called F-finite if $F^e_*R$ is a finitely generated $R$-module for one (or equivalently, all) $e>0$. A Noetherian F-finite ring $R$ is called strongly F-regular if for every $c\in R-\cup_{\mf p\in\Min(R)}\mf p$, there exists $e>0$ such that the map $R\to F^e_*R$ sending $1$ to $F^e_*c$ splits as a map of $R$-modules. $R$ is called weakly F-regular if every ideal is tightly closed, and is called F-rational if every parameter ideal is tightly closed. In general, we have the following implications (see \cite{HochsterHuneke1},\cite{Huneke}):
\[
\text{regular} \Rightarrow \text{strongly F-regular} \Rightarrow \text{weakly F-regular} \Rightarrow \text{F-rational}.
\]

\subsection*{Flat local extensions of Noetherian local rings} We next record a few basic lemmas on faithfully flat extensions.
\begin{lemma}
\label{lem: gonflement}
Let $(R,\m)$ be a Noetherian local ring with $k=R/\m$. Then for any field extension $k\to k'$, there exists a flat local extension $(R,\m)\to (R',\m')$ of Noetherian local rings such that $\m'=\m R$ and $R'/\m'\cong k'$. Furthermore, if $R$ is excellent and $k$ is perfect, then $R\to R'$ is a regular morphism (i.e., all fibers of $R\to R'$ are geometrically regular).
\end{lemma}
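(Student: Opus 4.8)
The plan is to build $R'$ by transfinite induction, decomposing the extension $k\to k'$ into a well-ordered tower of \emph{simple} steps and treating each step by an explicit flat construction; this is the classical \emph{gonflement} of Bourbaki. First I would well-order a generating set of $k'$ over $k$ and produce an increasing chain of intermediate fields $k=k_0\subseteq k_1\subseteq\cdots\subseteq k_\lambda\subseteq\cdots$ with $\bigcup_\lambda k_\lambda=k'$, where at each successor ordinal $k_{\lambda+1}$ is obtained from $k_\lambda$ by adjoining a single element that is either transcendental (so $k_{\lambda+1}=k_\lambda(t)$) or algebraic (so $k_{\lambda+1}=k_\lambda[t]/(f)$ for a monic irreducible $f$), and at limit ordinals I take unions. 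In parallel I construct flat local Noetherian extensions $R=R_0\to R_1\to\cdots\to R_\lambda\to\cdots$, each $R_\lambda$ having maximal ideal $\m R_\lambda$ and residue field $k_\lambda$, and set $R':=\varinjlim_\lambda R_\lambda$.

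For the two building blocks: in the transcendental case I set $R_{\lambda+1}=(R_\lambda[t])_{\m R_\lambda[t]}$, the localization of the polynomial ring at the prime $\m R_\lambda[t]$ (prime because the quotient is the domain $k_\lambda[t]$); this is flat, Noetherian, local with maximal ideal $\m R_{\lambda+1}$, and residue field $k_\lambda(t)$. In the algebraic case I lift $f$ to a monic $F\in R_\lambda[t]$ and set $R_{\lambda+1}=R_\lambda[t]/(F)$, a finite free $R_\lambda$-algebra, hence flat and Noetherian; it is local because it is module-finite over $R_\lambda$ with closed fiber the field $k_\lambda[t]/(f)=k_{\lambda+1}$, so its unique maximal ideal is $\m R_{\lambda+1}$. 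At limit ordinals I pass to the filtered colimit along the flat local transition maps; flatness and the identification of the maximal ideal and residue field pass to the colimit. The one genuinely delicate point in this existence half is that a filtered colimit of Noetherian rings can fail to be Noetherian, so I would invoke the classical fact that the resulting gonflement $R'$ is Noetherian; the favorable feature here is that $R\to R'$ is flat local with the \emph{field} $k'$ as its closed fiber.

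For the regularity statement I first note that since $k$ is perfect the extension $k\to k'$ is separable, so $\Spec k'\to\Spec k$ is already a regular morphism. At the level of individual steps this gives a clean picture: the transcendental step is a localization of a polynomial ring, whose fiber over every prime of $R_\lambda$ is a localization of a polynomial ring over a field and hence geometrically regular; and a separable algebraic step $R_\lambda\to R_\lambda[t]/(F)$ is \emph{\'etale}, because $F'$ is a unit of the local ring $R_{\lambda+1}$ (its residue $f'(\bar t)$ is nonzero in the field $k_{\lambda+1}$ by separability, whence $\Omega_{R_{\lambda+1}/R_\lambda}=0$ by Nakayama). Thus every \emph{finite} composite $R\to R_\lambda$ is a regular morphism. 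The obstacle is that regularity need not survive the transfinite colimit: a filtered colimit of regular local rings along flat transition maps can fail to be regular even when the colimit is Noetherian. This is exactly the place where excellence must enter.

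To overcome this I would pass to completions and use Cohen's structure theorem. Since $R$ is excellent it is a $G$-ring, so $R\to\widehat R$ is regular. Because $k$ is perfect, a coefficient field (in the equicharacteristic case) or Cohen ring (in mixed characteristic) $V\subseteq\widehat R$ with residue field $k$ can be enlarged, along the separable extension $k\to k'$, to a corresponding complete ring $V'$ with residue field $k'$, and $V\to V'$ is regular; writing $\widehat R$ as a quotient of $V[[x_1,\dots,x_n]]$ I would realize $\widehat{R'}$ as the completed base change of $\widehat R$ along $V\to V'$, so that $\widehat R\to\widehat{R'}$ is a base change of the regular map $V\to V'$ and hence regular. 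Composing, $R\to\widehat R\to\widehat{R'}$ is regular, and since $R'\to\widehat{R'}$ is faithfully flat, the descent of geometric regularity along faithfully flat maps yields that $R\to R'$ is regular, as desired. The main obstacle remains the transfinite colimit, and the delicate points I would need to pin down are the Noetherianness of $R'$ and the identification of $\widehat{R'}$ as the base change above; both are part of, or readily extracted from, the standard treatment of the gonflement and of residue field extensions of complete local rings.
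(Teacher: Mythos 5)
Your existence argument is the standard Bourbaki gonflement and matches what the paper cites; the problem lies in the regularity half. The crux of your argument is the claim that $\widehat R\to\widehat{R'}$ is regular because it is ``a base change of the regular map $V\to V'$''. It is not: $\widehat{R'}$ is a \emph{completed} base change. What is literally true is that $\widehat R\to\widehat{R'}$ is a base change of $V[[x_1,\dots,x_n]]\to V'[[x_1,\dots,x_n]]$, but the regularity of that map is not a formal consequence of the regularity of $V\to V'$ --- it is another instance of exactly the problem you are trying to solve (a flat local map of Noetherian complete local rings with $\mf m'=\mf m R'$ and separable residue field extension), and its fibers over non-maximal primes $\mf q$ involve the residue fields $\kappa(\mf q)$ of a power series ring, about which nothing can be said by elementary means. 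Passing from ``the closed fiber is geometrically regular'' to ``all fibers are geometrically regular'' is precisely the content of Grothendieck's localization problem, solved by Andr\'e (equivalently, the theorem that an adically formally smooth homomorphism of Noetherian rings is regular). Your reduction therefore relocates the main difficulty rather than resolving it, and without invoking Andr\'e's theorem at that point the proof is incomplete.

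The paper's own proof is a direct two-line application of that theorem: the closed fiber of $R\to R'$ is the separable field extension $k\to k'$, hence geometrically regular, and since $R$ is excellent its formal fibers are geometrically regular, so Andr\'e's solution of the localization problem gives that every fiber of $R\to R'$ is geometrically regular. If you wish to keep the completion detour, the repair is to apply the same theorem to $\widehat R\to\widehat{R'}$ (complete local rings are excellent); but then the detour --- together with the nontrivial identification of $\widehat{R'}$ with the completed base change, which itself rests on the uniqueness of complete flat local extensions with prescribed separable residue field extension via formal smoothness --- buys you nothing. The surrounding steps are fine: descent of regularity along the faithfully flat $R'\to\widehat{R'}$, regularity of $R\to\widehat R$ from excellence, and the \'etale or essentially smooth nature of the individual gonflement steps are all correctly handled, as is your caution that regularity need not survive the transfinite colimit.
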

\begin{proof}
The existence of $R'$ follows from gonflement, see \cite[Appendice 2]{BourbakiChapter8and9}. If $k$ is perfect, then the closed fiber of $R\to R'$ is a separable field extension and thus geometrically regular. Since $R$ is excellent, the formal fibers of $R$ are geometrically regular. Thus by Grothendieck's localization problem \cite{AndreGrothendieckLocalization}, we know that all the fibers of $R\to R'$ are geometrically regular.
\end{proof}

\begin{lemma}
\label{lem: equidimensional}
Let $(R,\m)\to (S,\n)$ be a flat local extension of Noetherian complete local rings such that $S/\m S$ is Gorenstein. Then we have
\begin{enumerate}
  \item If $R$ is equidimensional, then $S$ is equidimensional.
  \item If $R$ is unmixed, then $S$ is unmixed. 
\end{enumerate}
\end{lemma}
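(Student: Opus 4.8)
The plan is to exploit the Cohen--Macaulayness of the closed fiber to reduce the whole statement to a module-finite extension, where both properties are transparent. Set $\bar S := S/\m S$; it is Gorenstein, hence Cohen--Macaulay of some dimension $s$, and the flat local dimension formula gives $\dim S = \dim R + s$. I would choose a system of parameters $\bar y_1,\dots,\bar y_s$ of $\bar S$, which is an $\bar S$-regular sequence since $\bar S$ is Cohen--Macaulay, and lift it to $y_1,\dots,y_s\in\n$. The key input is the standard consequence of the local criterion for flatness: because $S$ is $R$-flat and $\underline y$ maps to an $\bar S$-regular sequence, $\underline y$ is an $S$-regular sequence and each $S/(y_1,\dots,y_i)$ stays $R$-flat. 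In particular $S':=S/(\underline y)S$ is $R$-flat with closed fiber $S'/\m S'=\bar S/(\underline{\bar y})$ of finite length. Since $R$ is complete and $S'$ is $R$-flat with finite-length closed fiber, $S'$ is a finitely generated (indeed free) $R$-module, so $R\to S'$ is module-finite and injective.

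For (1), I would first check that $S'$ is equidimensional. As $R\to S'$ is flat it satisfies going-down, so every minimal prime $\mathfrak P'$ of $S'$ contracts to a minimal prime of $R$; by module-finiteness $\dim S'/\mathfrak P'=\dim R/(\mathfrak P'\cap R)=\dim R$, the last equality because $R$ is equidimensional. Hence all minimal primes of $S'$ have coheight $\dim R$, i.e. $S'$ is equidimensional. It then remains to climb back from $S'=S/(\underline y)S$ to $S$ across the regular sequence, for which I would use the following descent lemma: if $T$ is a complete local ring and $x\in T$ is a nonzerodivisor in its maximal ideal with $T/xT$ equidimensional, then $T$ is equidimensional. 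This is clean via Cohen's structure theorem: write $T=P/J$ with $P$ regular (hence Cohen--Macaulay, so heights and coheights in $P$ are complementary) complete local. A nonzerodivisor drops the coheight of each minimal prime of $T$ by exactly one (Krull's theorem in the domain $P/\mathfrak Q$ together with the height formula in $P$), and comparing with $\dim T/xT=\dim T-1$ forces all minimal primes of $T$ to have equal height. Applying this to $T=S/(y_1,\dots,y_{i-1})$ and $x=y_i$ for $i=s,s-1,\dots,1$ (each such $T$ is complete and each $y_i$ is a nonzerodivisor on it) propagates equidimensionality from $S'$ down to $S$.

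For (2), recall that unmixed means equidimensional together with Serre's condition $S_1$. Since $R$ unmixed is in particular equidimensional, part (1) already gives that $S$ is equidimensional, so it suffices to verify $S_1$. Here I would observe that the regular-sequence construction shows \emph{all} fibers of $R\to S$ are Cohen--Macaulay: the intermediate quotients $S/(y_1,\dots,y_i)$ being $R$-flat keeps the exact sequences $0\to S/(y_1,\dots,y_i)\xrightarrow{y_{i+1}}S/(y_1,\dots,y_i)\to S/(y_1,\dots,y_{i+1})\to 0$ exact after applying $-\otimes_R\kappa(\mathfrak p)$, so $\underline y$ remains a regular sequence on each fiber $S\otimes_R\kappa(\mathfrak p)$ and cuts it down to the Artinian ring $S'\otimes_R\kappa(\mathfrak p)$; thus every fiber is Cohen--Macaulay of dimension $s$, in particular $S_1$. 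Since $R$ satisfies $S_1$ and all fibers satisfy $S_1$, the transfer of Serre's conditions along a flat local homomorphism (as in Matsumura, \emph{Commutative Ring Theory}, Thm.~23.9) yields that $S$ satisfies $S_1$. Combined with the equidimensionality from part (1), $S$ is unmixed.

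The main obstacle is the lower bound on coheights hidden in part (1): equidimensionality of the closed fiber alone is genuinely insufficient, since one can build a flat local map $R\to S$ with $R$ and $\bar S$ both equidimensional yet $S$ not equidimensional (there the closed fiber fails $S_1$ and the generic fiber is no longer equidimensional). Thus the Cohen--Macaulay hypothesis must be used to rigidify the fibers, and the device that converts it into control over every minimal prime of $S$—namely the passage to the module-finite $S'$ through the lifted regular sequence together with the complete-local descent lemma—is the step I expect to require the most care.
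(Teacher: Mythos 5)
Your reduction to the module-finite ring $S'=S/(\underline y)S$ is sound: lifting a system of parameters of the Cohen--Macaulay closed fiber to an $S$-regular sequence with $R$-flat quotients, and deducing that $S'$ is $R$-free because $R$ is complete, are both correct, as is your computation of the coheights of the minimal primes of $S'$. The genuine gap is the descent lemma you use to climb back from $S'$ to $S$: it is \emph{false} that a complete local ring $T$ with a nonzerodivisor $x\in\mathfrak m_T$ such that $T/xT$ is equidimensional must be equidimensional. Take $T=k[[a,b,c]]/(ab,ac)$, whose minimal primes $(a)$ and $(b,c)$ have coheights $2$ and $1$; the element $x=a+b$ lies in neither associated prime, hence is a nonzerodivisor, yet $T/xT\cong k[[a,c]]/(a^2,ac)$ has the unique minimal prime $(a)$ and so is equidimensional. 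The step that breaks in your sketch is the tacit assumption that a prime minimal over $\mathfrak Q+xT$ (for $\mathfrak Q\in\operatorname{Min}(T)$) is a minimal prime of $xT$: here $(b,c)+(a+b)$ is the maximal ideal, the short component simply vanishes from $\operatorname{Min}(T/xT)$, and no contradiction with equidimensionality of $T/xT$ is produced.

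The repair is to descend \emph{unmixedness} rather than equidimensionality: if $x$ is $T$-regular and $\mathfrak p\in\operatorname{Ass}(T)$, then any prime $\mathfrak q$ minimal over $\mathfrak p+(x)$ has $\operatorname{depth}T_{\mathfrak q}\le\operatorname{ht}(\mathfrak q/\mathfrak p)=1$ by Ischebeck and $\ge 1$ because of $x$, so $\mathfrak q\in\operatorname{Ass}(T/xT)$, and unmixedness of $T/xT$ then forces $\dim T/\mathfrak p=\dim T$. To use this for part (1) you must first reduce to $R$ a domain (as the paper does, via $S$ being equidimensional once each $S/\mathfrak pS$ is, for $\mathfrak p\in\operatorname{Min}(R)$): for an equidimensional $R$ with embedded primes your $S'$ is equidimensional but \emph{not} unmixed, and the descent genuinely fails there. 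After that reduction $S'$ is free over a domain, hence unmixed, and the corrected descent yields both statements --- in fact using only Cohen--Macaulayness of the closed fiber, which is weaker than what the paper's argument needs (it base-changes the injection $R\to\operatorname{Hom}_R(\omega_R,\omega_R)$ using $\omega_S\cong\omega_R\otimes_RS$, which requires the Gorenstein hypothesis). A secondary gap: your argument that every fiber of $R\to S$ is Cohen--Macaulay only controls localizations of the fiber at primes containing $(\underline y)$; you need either to show every prime of the fiber lies below such a prime, or to invoke Andr\'e's localization theorem, which is available since $R$ is complete and hence excellent.
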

\begin{proof}
To prove $(1)$, note that since $R$ is equidimensional, it is enough to show $S/\mf pS$ is equidimensional for all minimal primes $\mf p$ of $R$. Hence we may assume $R$ is a Noetherian complete local domain and thus it suffices to prove $(2)$.

Now to prove $(2)$, first note that $R\to \Hom_R(\omega_R,\omega_R)$ is injective by \cite[(2.2) e)]{HochsterHunekeS2}, where $\omega_R$ denotes the canonical module of $R$. Since $S/\m S$ is Gorenstein, we know that $\omega_{S}\cong \omega_R\otimes_RS$ by \cite[Lemma 4.3]{SchwedeZhangBertini} (the equidimensional hypothesis there is not needed when $R$, $S$ are local). Thus $S\to \Hom_{S}(\omega_{S}, \omega_{S})$ is the base change of $R\to \Hom_R(\omega_R,\omega_R)$ along $R\to S$. It follows that $S\to \Hom_{S}(\omega_{S}, \omega_{S})$ is injective and thus $S$ is unmixed by \cite[(2.2) e)]{HochsterHunekeS2}.
\end{proof}

\subsection*{Chains of integrally closed ideals} The goal of this subsection is to revisit a handy result
of Watanabe \cite{WatanabeChain}, who showed that $\length (I/J)$ for integrally closed
ideals can be realized by a chain of integrally closed ideals under mild assumptions on $R$.
The original result \cite[Theorem~2.1]{WatanabeChain}
had a normality assumption, in \cite[Lemma~2.1]{HMQS} this assumption was relaxed to
irreducibility. However, a closer analysis of \cite[Proposition~3.1]{WatanabeChain} allows to remove this assumption and give a slightly different proof.

\begin{proposition}[Watanabe]\label{prop: improved Watanabe}
Let $(R, \m)$ be a Noetherian local ring
and $J \subsetneq I$ be
two $\m$-primary integrally closed ideals of $R$.
Set $k=R/\m$. Then there is a one-to-one correspondence between the set
\[
\{
I' \text { integrally closed ideal}, J \subseteq I' \subseteq I, \length (I/I') = 1
\}
\]
and the set of $k$-rational closed points of
\[
\Proj \left (\bigoplus_{n \geq 0} \frac{I^n}{JI^{n - 1} + \m I^n} \right).
\]
In particular, if $k$ is algebraically closed, there is always a chain of integrally closed ideals
\[
J = I_0 \subsetneq I_1 \subsetneq \cdots \subsetneq I_\ell = I
\]
such that $\length (I_{i}/I_{i-1}) = 1$ for all $i = 1, \ldots, \ell$.
\end{proposition}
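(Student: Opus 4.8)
The plan is to realize the graded ring in the statement as a fiber cone and reduce everything to linear algebra on its degree-one piece. Write $F = \bigoplus_{n \ge 0} I^n/\m I^n$ for the fiber cone of $I$; it is a standard graded $k$-algebra, since $I^n = I\cdot I^{n-1}$ forces $F_n = F_1^n$. The images of a generating set of $J$ lie in $F_1 = I/\m I$, and I claim the graded ring $A := \bigoplus_{n\ge 0} \frac{I^n}{JI^{n-1} + \m I^n}$ is exactly the quotient of $F$ by the ideal generated by the image of $J$, because in degree $n$ that ideal is $(JI^{n-1} + \m I^n)/\m I^n$. In particular $A$ is again a standard graded $k$-algebra with $A_1 = I/(J + \m I)$, so $\Proj A$ is a closed subscheme of $\Proj \Sym_k(A_1)$ and all its closed points have residue field finite over $k$; such a point is $k$-rational precisely when that residue field is $k$.

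Next I would describe the candidate ideals. Any ideal $I'$ with $J \subseteq I' \subseteq I$ and $\length(I/I') = 1$ automatically satisfies $\m I \subseteq I'$, hence $J + \m I \subseteq I' \subseteq I$, so such $I'$ correspond bijectively to hyperplanes (codimension-one $k$-subspaces) $\overline{V}' \subseteq A_1$ via $\overline{V}' = I'/(J + \m I)$. The heart of the matter is to single out the integrally closed ones. Since $I$ is integrally closed and $\length(I/I') = 1$, the integral closure $\overline{I'}$ is either $I'$ or $I$, and $\overline{I'} = I$ holds if and only if $I'$ is a reduction of $I$; thus $I'$ is integrally closed if and only if $I'$ is not a reduction of $I$. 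Reducing the criterion $I^{n+1} = I'I^n$ $(n \gg 0)$ modulo $\m$ and applying Nakayama's lemma, $I'$ is a reduction of $I$ exactly when $V' = I'/\m I$ generates $F$ in all large degrees, i.e.\ when the common vanishing locus of $V'$ in $\Proj F$ is empty; since $J \subseteq I'$, this locus equals the vanishing locus $V_+(\overline{V}')$ of the hyperplane $\overline{V}'$ inside $\Proj A$. So the integrally closed $I'$ are precisely those with $V_+(\overline{V}') \ne \emptyset$.

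To turn this into the stated bijection I would prove the key geometric lemma: for a hyperplane $\overline{V}' \subseteq A_1$ the scheme $V_+(\overline{V}') = \Proj(A/(\overline{V}'))$ is either empty or a single $k$-rational point. Indeed $A/(\overline{V}')$ is standard graded with one-dimensional degree-one part $A_1/\overline{V}' = k\bar\ell$, hence a quotient of the polynomial ring $k[\bar\ell]$; its only homogeneous quotients are $k[t]/(t^m)$ (finite length, empty $\Proj$) and $k[t]$ (whose $\Proj$ is a single $k$-rational point). Running this in both directions gives the correspondence: an integrally closed $I'$ produces the hyperplane $\overline{V}'$ whose vanishing locus is the desired $k$-rational point, while a $k$-rational closed point $P$ produces the hyperplane $\{f \in A_1 : f(P) = 0\}$ (of codimension one because $A_1 \not\subseteq \mathfrak{p}_P$), and these assignments are mutually inverse since a hyperplane contained in $\{f : f(P) = 0\}$ must equal it by a dimension count. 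I expect the main obstacle to be bookkeeping this last step cleanly, matching the scheme-theoretic notion of a $k$-rational point of $\Proj A$ with the reduction-theoretic characterization of integral closedness and checking the two maps are inverse.

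Finally, for the ``in particular'' statement, suppose $k$ is algebraically closed. Since $J \subsetneq I$ are both integrally closed and $\m$-primary, $J$ cannot be a reduction of $I$ (else $J = \overline{J} = I$), so $\Proj A \ne \emptyset$, and over an algebraically closed field a nonempty projective scheme has a $k$-rational closed point. By the correspondence this yields an integrally closed $I_{\ell-1}$ with $J \subseteq I_{\ell-1} \subsetneq I$ and $\length(I/I_{\ell-1}) = 1$. Applying the same argument to the pair $J \subseteq I_{\ell-1}$ and iterating, the process strictly decreases $\length(I_i/J)$ at each step and terminates after $\ell = \length(I/J)$ steps, producing the desired chain $J = I_0 \subsetneq \cdots \subsetneq I_\ell = I$ of integrally closed ideals with unit colength jumps.
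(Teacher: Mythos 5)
Your proof is correct and follows essentially the same route as the paper's: both pass to the fiber cone, identify $A$ as its quotient by the image of $J$, characterize the integrally closed $I'$ as the non-reductions of $I$ via Nakayama, and use that a codimension-one subspace of $A_1$ cuts out either the empty set or a single $k$-rational closed point of $\Proj A$, with nonemptiness of $\Proj A$ coming from $J$ being integrally closed. The only differences are expository: you spell out the $\Proj k[t]$ lemma and the two-way verification of the bijection in more detail than the paper does.
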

\begin{proof}
An ideal $I' \subseteq I$ with $\length (I/I') = 1$ is given by
a subspace $V \subseteq I/\mf mI$ of codimension $1$.
If $I'$ is not integrally closed, then $I$ must be its integral closure,
hence $I'$ is integrally closed if and only if
$(\mf mI + V)I^{n-1} \subsetneq I^{n}$ for all $n$.
Going modulo $\mf mI^n$,  this condition is equivalent to saying that
the ideal generated by $V \subset I/\mf mI$ in the fiber cone $\mathcal{F}(I) := \oplus_{n \geq 0} I^n/\mf mI^n$
is not primary to the irrelevant ideal.
As $V$ has codimension $1$, in this case the ideal
$V\mathcal{F}(I)$ must be defining a
closed point in $\Proj \mathcal{F}(I)$. Hence we have a one-to-one correspondence
between integrally closed $I'$ and $k$-rational closed points in $\Proj \mathcal{F}(I)$.

Next we note that if $I'$ corresponds to $V$, then $J \subseteq I'$ if and only if $V$ contains the image
$(J + \mf mI)/\mf m I$, i.e., it corresponds to a homogeneous prime ideal in
\[
A := \bigoplus_{n \geq 0} \frac{I^n}{JI^{n - 1} + \m I^n}.
\]
Finally, we note that $I^n \neq JI^{n - 1}$ for all $n$ since $J$ is integrally closed. Hence, the irrelevant ideal of the standard graded algebra $A$ is not nilpotent, i.e., $\Proj A$ is not empty and thus must contain a $k$-rational closed point if $k$ is algebraically closed. This allows us to find $I_{\ell - 1}$ in the chain and we may continue by induction.
\end{proof}

We next note that the argument in \cite[Proposition~2.1]{CDHZ} holds more generally.

\begin{corollary}
\label{cor: integrally closed rank}
Let $(R, \mf m)$ be a Noetherian reduced local ring with $R/\m$ algebraically closed
and $J \subseteq I$ be $\m$-primary integrally closed ideals.
Suppose that $M$ is a finitely generated $R$-module that has a rank, i.e.,
there is an integer $r > 0$ such that  $\dim_{k(\mf p)} M_\mf p = r$
for all minimal primes $\mf p$ of $R$.
Then
\[
\length (IM/JM) \geq r \length (I/J).
\]
\end{corollary}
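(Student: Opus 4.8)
The plan is to reduce to the case $\length(I/J)=1$ and then to induct on the rank $r$.

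First I would invoke Proposition~\ref{prop: improved Watanabe}: since $k=R/\m$ is algebraically closed, there is a chain of integrally closed ideals $J=I_0\subsetneq I_1\subsetneq\cdots\subsetneq I_\ell=I$ with $\length(I_i/I_{i-1})=1$ and $\ell=\length(I/J)$. By additivity of length along $JM=I_0M\subseteq\cdots\subseteq I_\ell M=IM$, it is enough to prove $\length(I_iM/I_{i-1}M)\ge r$ for each step. So I reduce to the case $J\subsetneq I$ integrally closed with $\length(I/J)=1$; I fix any $x\in I\setminus J$, so that $I=J+Rx$ and, because $J=\overline J$, also $x\notin\overline J$.

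I then induct on $r$ (the case $r=0$ being trivial). The idea is to peel off one rank using a sufficiently general element $m_1\in M$. Using that $k$ is infinite and that there are only finitely many minimal primes, I would arrange simultaneously that $\Ann_R(m_1)=0$ (so $Rm_1\cong R$), that $m_1\neq 0$ in $M_{\mf p}$ for every minimal prime $\mf p$ (so that $M':=M/Rm_1$ has rank $r-1$), and that $xm_1\notin JM$. This last condition is the crux and is achievable precisely because $N:=(JM:_M x)$ is a \emph{proper} submodule of $M$: if instead $xM\subseteq JM$, then the determinant trick applied to the faithful module $M$ (faithful since $R$ is reduced and $\rank M=r\ge1$) would force $x\in\overline J=J$, a contradiction. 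Hence $N\subsetneq M$, and a general $m_1$ avoids $N$.

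From the short exact sequence $0\to Rm_1\to M\to M'\to0$ with $Rm_1$ free, tensoring with $R/\mathfrak a$ gives, for every $\m$-primary $\mathfrak a$, the identity $\length(M/\mathfrak a M)=\length(R/\mathfrak a)-\tau_{\mathfrak a}+\length(M'/\mathfrak a M')$, where $\tau_{\mathfrak a}=\length\big((\mathfrak a M:_R m_1)/\mathfrak a\big)$ is the length of the image of the connecting map $\Tor_1^R(M',R/\mathfrak a)\to Rm_1/\mathfrak a Rm_1$. Subtracting the identities for $\mathfrak a=I$ and $\mathfrak a=J$ yields $\length(IM/JM)=\length(I/J)+(\tau_I-\tau_J)+\length(IM'/JM')$. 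By the inductive hypothesis $\length(IM'/JM')\ge r-1$, and $\length(I/J)=1$, so it remains to check $\tau_I\ge\tau_J$. A direct computation rewrites this as $\length\big((IM:_R m_1)/(JM:_R m_1)\big)\ge\length(I/J)=1$, which holds because $x\in(IM:_R m_1)$ (as $x\in I$) while $x\notin(JM:_R m_1)$ (as $xm_1\notin JM$). Thus $\length(IM/JM)\ge 1+0+(r-1)=r$, closing the induction.

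The main obstacle is exactly the inequality $\tau_I\ge\tau_J$, i.e.\ controlling the $\Tor_1$ correction term; this is where both hypotheses enter, integral closedness of $J$ (through the determinant trick, guaranteeing $(JM:_M x)\neq M$) and the algebraically closed residue field (to select $m_1$ general enough that $xm_1\notin JM$ while keeping $\rank M'=r-1$). A secondary point to verify with care is that one general $m_1$ can meet all the genericity requirements at once, and that applying the inductive hypothesis to $M'=M/Rm_1$ is legitimate: $M'$ has a rank but need not be torsion-free, so it is essential that the statement being proved makes no torsion-freeness assumption.
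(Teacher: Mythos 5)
Your proof is correct, and after the shared first step (reducing to $\length(I/J)=1$ via Proposition~\ref{prop: improved Watanabe}) it takes a genuinely different route from the paper's. The paper stays with $M$ itself: writing $I=(J,u)$, it notes that $\length(IM/JM)$ equals the minimal number of generators $\nu$ of $M/(JM:_M u)$, lifts those generators to a submodule $N'$, applies the determinantal trick to $M/N'$ to get $a=u^n+j_1u^{n-1}+\cdots+j_n$ with $aM\subseteq N'$, uses $u\notin\overline{J}$ (checked modulo minimal primes) to find a minimal prime $\mf p$ with $a\notin\mf p$, and concludes $r=\dim_{k(\mf p)}M_{\mf p}=\dim_{k(\mf p)}(N')_{\mf p}\leq\nu$ in one stroke. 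You instead induct on $r$, splitting off a general free cyclic submodule $Rm_1$ and tracking the $\Tor_1$ correction terms; your key inequality $\tau_I\geq\tau_J$ is secured by arranging $xm_1\notin JM$, which is possible precisely because $(JM:_M x)\subsetneq M$ --- and this is where you, like the paper, use the determinantal trick (on the faithful module $M$) together with $x\notin\overline{J}$. The details check out: your genericity conditions are each cut out by proper submodules (indeed $\Ann(m_1)=0$ is equivalent to $(m_1)_{\mf p}\neq 0$ for all minimal primes $\mf p$ since $R$ is reduced, so conditions (a) and (b) coincide), prime avoidance in $M/\m M$ over the infinite field $k$ applies, the length identity from the connecting homomorphism is correct, and the inductive hypothesis legitimately applies to $M'=M/Rm_1$, which has rank $r-1$ but may have torsion. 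The paper's argument is shorter and avoids both the induction and the Tor bookkeeping; yours is more explicit about where each unit of rank contributes a unit of length, at the cost of an extra genericity argument.
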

\begin{proof}
By Proposition~\ref{prop: improved Watanabe} the statement reduces to the case where $\length (I/J) = 1$, i.e., $I = (J, u)$ where $\m u \subseteq J$.
In this case, we let $N = JM :_M u$, so $M/N \cong IM/JM$
and the minimal number of generators of $M/N$ is $\length (IM/JM)$, which we denote by $\nu$. Now lift a system of minimal generators of $M/N$ to $M$ and let $N'$ be the submodule of $M$ generated by them. Since $u (M/N') \subseteq J (M/N')$,
the determinantal trick argument as in \cite[Proposition~2.1]{CDHZ} shows that there exists an element $a = u^n + j_1u^{n-1} + \cdots + j_n$ where $j_k\in J^k$ such that $a M \subseteq N'$.
Since $u \notin \overline{J}$ and because integral closure can be checked modulo minimal primes, $a \notin \mf p$ for some minimal prime $\mf p$ of $R$.
It follows that $M_\mf p = (N')_\mf p$. But the minimal number of generators
does not increase under localization so $\nu \geq r$ as required.
\end{proof}

In prime characteristic $p>0$ we obtain a consequence which will be crucial for treating
relative Hilbert--Kunz multiplicity. The result below is a generalization of \cite[Proposition 2.3]{CDHZ}, relaxing several of their assumptions.

\begin{corollary}
\label{cor: relative HK inequality}
Let $(R, \mf m)$ be a Noetherian local ring of prime characteristic $p > 0$ such that $\widehat{R}$ is equidimensional and $R/\m$ is perfect. If $J \subseteq I$ are integrally closed $\m$-primary ideals, then we have
\[
\ehk(J) - \ehk (I) \geq \length (I/J).
\]
\end{corollary}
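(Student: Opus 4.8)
My plan is to reduce, through a sequence of faithfully flat base changes and a final passage to the reduction, to the case where $R$ is complete, $F$-finite, reduced, equidimensional of dimension $d$, with $R/\m$ algebraically closed, and then to run the rank inequality of Corollary~\ref{cor: integrally closed rank} on the Frobenius pushforward $F^e_*R$. First I would complete: passing to $\widehat{R}$ preserves colength, Hilbert--Kunz multiplicity (the lengths $\length(R/I^{[p^e]})$ are unchanged), the equidimensionality hypothesis, and integral closedness of $\m$-primary ideals by property~(5); moreover a complete local ring with $F$-finite (here perfect) residue field is itself $F$-finite. Next I would enlarge the residue field: Lemma~\ref{lem: gonflement} applied with $k'=\overline{k}$ gives a flat local \emph{regular} morphism $R\to R'$ with $R'/\m R'=\overline{k}$, and I set $S=\widehat{R'}$. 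Then $R\to S$ is faithfully flat local with closed fiber the field $\overline{k}$, so colength and Hilbert--Kunz multiplicity are preserved (the fiber has length one); $S$ is equidimensional by Lemma~\ref{lem: equidimensional}(1) since a field is Gorenstein; $S$ is complete with algebraically closed residue field, hence $F$-finite; and $IS\supseteq JS$ stay integrally closed. After renaming I would thus assume $R$ is complete, $F$-finite, equidimensional, with $R/\m$ algebraically closed.

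Next I would pass to the reduction. Let $N$ be the nilradical and $\red{R}=R/N$. Since $N$ lies in every integrally closed ideal, $N\subseteq J\subseteq I$, so $\length(I/J)=\length_{\red{R}}(I\red{R}/J\red{R})$; and because integral closedness is tested modulo the minimal primes (property~(3)) and $\Min(R)=\Min(\red{R})$, the images $I\red{R}\supseteq J\red{R}$ are integrally closed $\m$-primary ideals of $\red{R}$. The canonical surjection yields
\[
\length\big(I^{[p^e]}/J^{[p^e]}\big)\ \geq\ \length_{\red{R}}\big((I\red{R})^{[p^e]}/(J\red{R})^{[p^e]}\big),
\]
and $\red{R}$ remains complete, $F$-finite, reduced, equidimensional, with algebraically closed residue field, so it suffices to prove the finite-level bound $\length((I')^{[p^e]}/(J')^{[p^e]})\geq p^{ed}\,\length(I'/J')$ over the reduced ring. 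I would therefore assume in addition that $R$ is reduced.

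For the core step, since $k$ is perfect, twisting by Frobenius preserves lengths of finite-length modules, and identifying $I\cdot F^e_*R=F^e_*(I^{[p^e]})$ gives $\length(I^{[p^e]}/J^{[p^e]})=\length(I\,F^e_*R/J\,F^e_*R)$. The module $M=F^e_*R$ is finitely generated by $F$-finiteness and has a rank: at each minimal prime $\mf p$ the reducedness makes $R_\mf p=k(\mf p)=:K$ a field, $R/\mf p$ is a complete local domain of dimension $d$ with perfect residue field, whence $[K:K^{p^e}]=p^{ed}$ (the $p$-degree of $K$ equals that of the fraction field of a Cohen subring $k[[x_1,\dots,x_d]]$ and is unchanged by the finite extension), and equidimensionality makes the common rank $r=p^{ed}$. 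Corollary~\ref{cor: integrally closed rank} then gives $\length(I\,F^e_*R/J\,F^e_*R)\geq p^{ed}\length(I/J)$, i.e.
\[
\length\big(I^{[p^e]}/J^{[p^e]}\big)\ \geq\ p^{ed}\,\length(I/J).
\]
Dividing by $p^{ed}$ and letting $e\to\infty$, the left side converges to $\ehk(J)-\ehk(I)$ by Monsky's theorem, which gives the claimed inequality.

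The hard part, and the place where the excellence/perfectness hypotheses are really used, is the preservation of integral closedness under the residue-field enlargement $R\to S$: this relies on the regularity of the gonflement morphism (the fibers are geometrically regular because $k$ is perfect) together with the fact that integral closure commutes with flat base change along maps with geometrically regular fibers, after which property~(5) handles the completion $R'\to\widehat{R'}$. By comparison, the rank computation $\rk F^e_*R=p^{ed}$, the Frobenius length-preservation, and the two surjection/length bookkeeping steps are routine once $F$-finiteness and perfectness of $k$ are in force.
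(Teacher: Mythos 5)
Your proposal is correct and follows essentially the same path as the paper's proof: complete, enlarge the residue field via the gonflement of Lemma~\ref{lem: gonflement} (using regularity of the morphism to preserve integral closedness and Lemma~\ref{lem: equidimensional} for equidimensionality), pass to $\red{R}$, and apply Corollary~\ref{cor: integrally closed rank} to $M=F^e_*R$ of rank $p^{ed}$ before letting $e\to\infty$. The only cosmetic differences are that you handle the reduction step by a finite-level surjection of $I^{[p^e]}/J^{[p^e]}$ where the paper uses additivity of $\ehk$ in short exact sequences, and you re-derive the rank of $F^e_*R$ where the paper cites Kunz.
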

\begin{proof}
We may assume $R=\widehat{R}$ is complete. By Lemma~\ref{lem: gonflement}, there exists a flat local extension $R\to R'$ with $R'/\m R'$ an algebraically closed field. Replacing $R'$ by $\widehat{R'}$ to assume $R'$ is complete, we know that $R\to R'$ is a regular morphism by Lemma~\ref{lem: gonflement}. In particular, $R\to R'$ is a normal morphism and thus $IR'$ is integrally closed whenever $I$ is integrally closed by \cite[Theorem 19.2.1]{SwansonHuneke}. Moreover, $R'$ is equidimensional by Lemma~\ref{lem: equidimensional} and it is clear that $\ehk(I)=\ehk(IR')$. Therefore, after replacing $R$ by $R'$, we may assume that $R$ is complete, equidimensional, and $R/\m$ is algebraically closed.

Now we note that $\length (I/J)$ is unaffected by passing to $\red{R}$ since $I, J$ are integrally closed.
Since Hilbert--Kunz multiplicity is additive in short exact sequences, we have
\begin{align*}
\ehk(J) - \ehk(I) & =
\ehk (J\red{R}) - \ehk(I\red{R}) + \ehk (J, N(R)) - \ehk (I, N(R)) \\
& \geq \ehk (J\red{R}) - \ehk(I\red{R}),
\end{align*}
where $N(R)$ denotes the nilradical of $R$. Thus it suffices to prove the assertion when $R$ is complete, reduced and equidimensional
with $R/\m$ algebraically closed. These conditions imply that $R$ is F-finite. Now by \cite[Proposition~2.3]{Kunz2}, the rank of $M = F^e_*R$ is $p^{e\dim(R)}$,
thus Corollary~\ref{cor: integrally closed rank} applies and the assertion follows after taking the limit as $e \to \infty$.
\end{proof}

We can use Corollary~\ref{cor: relative HK inequality} to give an alternative proof of Watanabe--Yoshida's
characterization of regular local rings \cite{WatanabeYoshida} via Hilbert--Kunz multiplicity.
The key step in their argument is to find an $\mf m$-primary ideal $I \subseteq \m^{[p]}$
such that $\ehk (I) \geq \length (R/I)$. The original proof took $I$ to be a parameter ideal, which required a significant amount of work to show that $R$ is Cohen--Macaulay. A simpler argument of Huneke--Yao \cite{HunekeYao} constructs such an ideal by using that regular locus is large by induction on dimension. We will use instead integrally closed ideals.

\begin{theorem}[Watanabe--Yoshida]
\label{thm: Watanabe--Yoshida}
Let $(R, \mf m)$ be a Noetherian local ring of prime characteristic $p > 0$ such that $\widehat{R}$ is unmixed. If $\ehk (R) = 1$, then $R$ is regular.
\end{theorem}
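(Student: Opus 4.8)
The plan is to run the Watanabe--Yoshida strategy, but to replace the step where one proves $R$ Cohen--Macaulay by a direct appeal to the relative Hilbert--Kunz inequality of Corollary~\ref{cor: relative HK inequality}.

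First I would make the standard reductions. Since regularity, $\ehk(R)$, and unmixedness of the completion are all unaffected by completing, I may assume $R$ is complete. Using the gonflement extension $R \to R'$ of Lemma~\ref{lem: gonflement} I would arrange that the residue field is algebraically closed, hence perfect: by that lemma $R \to R'$ is a regular (in particular normal) morphism with $\m R'=\m'$, so $\ehk(IR')=\ehk(I)$ for all $\m$-primary $I$, unmixedness is preserved by Lemma~\ref{lem: equidimensional}, and regularity descends from $R'$ to $R$ along the faithfully flat extension; thus it suffices to treat $R'$. Finally, since $\widehat R$ is unmixed it is $S_1$, so a nonzero nilpotent would be supported in dimension $d$ and would force $\ehk(R)>\ehk(\red R)\ge 1$; hence $\ehk(R)=1$ already gives that $R$ is reduced. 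After these reductions $R$ is complete, reduced, equidimensional, with algebraically closed residue field, and Corollary~\ref{cor: relative HK inequality} applies.

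The engine is the following reading of Corollary~\ref{cor: relative HK inequality}: applying it to the pair $J \subseteq \m$, for an arbitrary integrally closed $\m$-primary ideal $J$, and using $\ehk(\m)=\ehk(R)=1$, gives
\[
\ehk(J) \;\ge\; \ehk(\m) + \length(\m/J) \;=\; \length(R/J).
\]
Thus \emph{every} integrally closed $\m$-primary ideal is a witness for the inequality $\ehk(J)\ge \length(R/J)$. This is exactly the ingredient that Watanabe--Yoshida and Huneke--Yao extract with considerable effort: they must first establish that $R$ is Cohen--Macaulay in order to produce an ideal $I\subseteq \m^{[p]}$ with $\ehk(I)\ge \length(R/I)$ (a parameter ideal, for which colength equals the Hilbert--Samuel, hence the Hilbert--Kunz, multiplicity), whereas the relative inequality hands us such witnesses among integrally closed ideals for free.

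To finish I would feed these witnesses into the classical scheme. Applying $\ehk(J)\ge\length(R/J)$ to the integral closures $J=\overline{\m^{[p^e]}}$ and combining with the upper bound $\ehk(\overline{\m^{[p^e]}})\le \ehk(\m^{[p^e]})=p^{ed}\ehk(R)=p^{ed}$ controls the colengths $\length(R/\overline{\m^{[p^e]}})$ from above by $p^{ed}$; from here I would invoke the Watanabe--Yoshida/Huneke--Yao analysis, culminating in Kunz's characterization of regularity, to conclude that $R$ is regular. The main obstacle is precisely the one the relative inequality is designed to bypass --- producing an $\m$-primary ideal whose Hilbert--Kunz multiplicity is at least its colength without first proving that $R$ is Cohen--Macaulay; with that in hand, the remaining work is the more routine (though still delicate) bookkeeping needed to match the integrally closed witnesses to the exact colength input of Kunz's theorem, together with the verification that each reduction above genuinely preserves both the hypothesis $\ehk(R)=1$ and the conclusion of regularity.
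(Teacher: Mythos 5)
Your reductions and your central observation are both sound: completing, passing to an algebraically closed residue field via gonflement, using unmixedness plus the additivity formula to see that $\ehk(R)=1$ forces $R$ reduced (the paper in fact concludes $R$ is a domain, which it needs in order to quote \cite[Proposition~5.3.4]{SwansonHuneke}, but reduced would do by Rees's theorem on analytic unramifiedness), and then reading Corollary~\ref{cor: relative HK inequality} with $I=\m$ as saying that $\ehk(J)\ge \length(R/J)$ for \emph{every} integrally closed $\m$-primary $J$. This is exactly the paper's replacement for the Cohen--Macaulay step of Watanabe--Yoshida and Huneke--Yao.

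The gap is in how you feed this into Kunz's theorem. You propose to take $J=\overline{\m^{[p^e]}}$ and deduce $\length(R/\overline{\m^{[p^e]}})\le \ehk(\overline{\m^{[p^e]}})\le \ehk(\m^{[p^e]})=p^{ed}$. But $\m^{[p^e]}\subseteq \overline{\m^{[p^e]}}$, so this bounds the colength of the \emph{larger} ideal, whereas Kunz requires $\length(R/\m^{[p^e]})\le p^{ed}$; the inequality points the wrong way, and the defect $\length(\overline{\m^{[p^e]}}/\m^{[p^e]})$ is not negligible --- already for $R=k[[x,y]]$ one has $\overline{\m^{[q]}}=\m^{q}$ (up to the obvious monomials), so $\length(R/\overline{\m^{[q]}})\sim q^2/2$ while $\length(R/\m^{[q]})=q^2$, and your bound carries no information. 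The paper's fix is to go the other way: use Rees's theorem ($\overline{\m^{n+c}}\subseteq\m^{n}$, valid once $R$ is a complete domain) to produce an integrally closed ideal $I=\overline{\m^{n}}$ \emph{contained in} $\m^{[p]}$, and then sandwich
\[
\length(\m^{[p]}/I)\,\ehk(R)+\ehk(\m^{[p]}) \;\ge\; \ehk(I)\;\ge\;\length(R/I)=\length(\m^{[p]}/I)+\length(R/\m^{[p]}),
\]
where the upper bound is the relative Lech-type inequality of \cite[Lemma~4.2]{WatanabeYoshida} (Lemma~\ref{lem: sup HK upper}). Because $\ehk(R)=1$, the terms $\length(\m^{[p]}/I)$ cancel and one gets $p^{\dim R}\ge\length(R/\m^{[p]})$, which is exactly what Kunz needs. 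Your write-up is missing both this containment $I\subseteq\m^{[p]}$ (and the Rees-type input that makes it possible) and the cancellation mechanism provided by Lemma~\ref{lem: sup HK upper}; without them the ``routine bookkeeping'' you defer does not go through.
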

\begin{proof}
We may assume that $R=\widehat{R}$ is complete. Let $k\cong R/\m$ be a coefficient field of $R$. Applying Lemma~\ref{lem: equidimensional}
to the map $R \to R \widehat{\otimes}_k \overline{k}$, we reduce the problem
to the case where $R/\m$ is algebraically closed.
Now, we note that
\[
1 = \ehk (R) = \sum_{\mf p \in \Minh (R)} \length (R_\mf p) \ehk (R/\mf p)
\geq \sum_{\mf p \in \Minh (R)} \length (R_\mf p)
\]
where the sum varies through all minimal primes $\mf p$ of $R$ such that $\dim (R/\mf p) = \dim (R)$. By the unmixedness assumption these are all the \emph{associated primes} of $R$, it follows that $R$ is a domain.

Since $R$ is a Noetherian complete local domain, there is a constant $c$ such that
$\overline{\mf m^{c+n}} \subseteq \mf m^{n}$ by \cite[Proposition~5.3.4]{SwansonHuneke}. Thus we can find an integrally closed ideal $I := \overline{\mf m^n} \subseteq \m^{[p]}$.
Now we have
\[
 \length (\mf m^{[p]}/I)  + p^{\dim(R)} = \length (\mf m^{[p]}/I) \ehk (R) + \ehk (\mf m^{[p]}) \geq \ehk (I) \geq \length (R/I) = \length (\mf m^{[p]}/I) + \length (R/\mf m^{[p]}),
\]
where the first inequality follows from \cite[Lemma 4.2]{WatanabeYoshida} and the second inequality follows from Corollary~\ref{cor: relative HK inequality}. Thus we obtain that $p^{\dim(R)} \geq \length (R/\mf m^{[p]})$. This implies that $R$ is regular by the celebrated result of Kunz \cite{Kunz1}.
\end{proof}

Finally, we also give an improvement of \cite[Corollary~2.4]{CDHZ}.

\begin{corollary}
\label{cor: HK integral closed and singular}
Let $(R, \mf m)$ be a Noetherian local ring of prime characteristic $p > 0$ such that $\widehat{R}$ is equidimensional and $R/\m$ is perfect. If $I$ is $\m$-primary and integrally closed
then $\ehk (I) \geq \length (R/I)$. Moreover, if $\widehat{R}$ is unmixed and equality holds for some (integrally closed) $I$, then $R$ is regular.
\end{corollary}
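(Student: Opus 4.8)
The plan is to deduce both assertions almost immediately from Corollary~\ref{cor: relative HK inequality} (the relative Hilbert--Kunz inequality for integrally closed ideals), together with the elementary bound $\ehk(R) \geq 1$, and to handle the equality case by invoking Theorem~\ref{thm: Watanabe--Yoshida}. The key observation that makes this work is that, since $I$ is $\m$-primary, we have $I \subseteq \m$, and $\m$ is itself an integrally closed $\m$-primary ideal; hence the pair $I \subseteq \m$ is a legitimate input for Corollary~\ref{cor: relative HK inequality}.

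First I would apply Corollary~\ref{cor: relative HK inequality} to the pair $I \subseteq \m$. This is valid because the standing hypotheses here, namely that $\widehat{R}$ is equidimensional and $R/\m$ is perfect, are exactly those required by that corollary. It yields
\[
\ehk(I) - \ehk(\m) \geq \length(\m/I) = \length(R/I) - 1.
\]
Since $\ehk(\m) = \ehk(R)$, this rearranges to $\ehk(I) \geq \ehk(R) + \length(R/I) - 1$. To conclude the first assertion it then suffices to know that $\ehk(R) \geq 1$, which I would record from the associativity formula $\ehk(R) = \sum_{\mf p \in \Minh(R)} \length(R_\mf p)\, \ehk(R/\mf p)$ already used in the proof of Theorem~\ref{thm: Watanabe--Yoshida}: each summand is at least $1$ (the Hilbert--Kunz multiplicity of the maximal ideal of a local domain is at least $1$, and $\length(R_\mf p) \geq 1$) and the sum is nonempty, so $\ehk(R) \geq 1$ and therefore $\ehk(I) \geq \length(R/I)$.

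For the second assertion, assume in addition that $\widehat{R}$ is unmixed and that $\ehk(I) = \length(R/I)$ for some integrally closed $\m$-primary ideal $I$. Substituting this equality into the inequality $\ehk(I) \geq \ehk(R) + \length(R/I) - 1$ forces $\ehk(R) \leq 1$; combined with $\ehk(R) \geq 1$ this gives $\ehk(R) = 1$. Theorem~\ref{thm: Watanabe--Yoshida} then yields that $R$ is regular, completing the argument.

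The substantive content of the argument is entirely front-loaded into Corollary~\ref{cor: relative HK inequality} and Theorem~\ref{thm: Watanabe--Yoshida}, both already established, so I anticipate no serious obstacle. The only points requiring care are confirming that $\m$ is an admissible ``larger'' ideal in Corollary~\ref{cor: relative HK inequality} and verifying the bound $\ehk(R) \geq 1$ under the mere equidimensionality hypothesis (as opposed to reducedness or the Cohen--Macaulay property). The degenerate case $I = \m$ is consistent: there $\length(\m/I) = 0$ and the inequality reduces to $\ehk(R) \geq 1 = \length(R/\m)$.
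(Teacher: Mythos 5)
Your argument is correct and is essentially identical to the paper's proof: both apply Corollary~\ref{cor: relative HK inequality} to the pair $I \subseteq \m$ to get $\ehk(I) \geq \length(R/I) + \ehk(\m) - 1 \geq \length(R/I)$, and both deduce the equality case from $\ehk(\m) = 1$ via Theorem~\ref{thm: Watanabe--Yoshida}. The only difference is that you spell out the routine verification that $\ehk(R) \geq 1$, which the paper leaves implicit.
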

\begin{proof}
By Corollary~\ref{cor: relative HK inequality},
$\ehk (I) \geq \length (R/I) + \ehk (\mf m) - 1 \geq \length (R/I)$.
If equality holds, then $\ehk (\mf m) = 1$,
so $R$ is regular by Theorem~\ref{thm: Watanabe--Yoshida}.
\end{proof}

\section{Multiplicity and colength}

\subsection{Infima} In this section we prove our main results on the comparison between multiplicity and colength. We start with the following lemma which essentially is a consequence of Flenner's local Bertini theorem \cite[(3.1) Satz]{Flenner} (see also \cite{Trivedi, TrivediE} for correction in the mixed characteristic case).
\begin{lemma}
\label{lem.Flenner}
Let $(R, \mf m)$ be a Noetherian complete local ring of dimension $d \geq 2$. Suppose there exists a minimal prime $P$ of $R$ such that $\dim(R/P)=d$ and $R_P$ is a field. Then there exists an element $x \in \m$ such that $\dim(R/(x))=d-1$ and $R/(x)$ has a minimal prime $P'$ with $\dim(R/P')=d-1$ and $(R/(x))_{P'}$ a field.
\end{lemma}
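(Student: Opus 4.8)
The plan is to reduce to the complete local domain $T := R/P$ and to produce the desired hyperplane section by choosing a single well-placed height-one prime of $T$; this is an elementary instance of the local Bertini philosophy behind Flenner's theorem. The guiding observation is that once we find a prime $P' \supseteq P$ with $\dim(R/P') = d-1$ and $PR_{P'} = 0$, the localization $R_{P'}$ collapses to $(R/P)_{P'} = T_{\mathfrak q}$, where $\mathfrak q := P'/P$; so if in addition $T_{\mathfrak q}$ is a DVR and the image $\bar x$ of our chosen $x$ is a uniformizer, then $(R/(x))_{P'} = T_{\mathfrak q}/\bar x T_{\mathfrak q}$ is automatically a field.

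First I would use that $R_P$ is a field to clear the $P$-component: since $R_P$ is a field we have $PR_P = 0$, so the finitely generated ideal $P$ is annihilated by some $s \in R \setminus P$, and its image $\bar s$ in $T$ is nonzero. Next, working in the complete (hence excellent, hence Nagata) local domain $T$ of dimension $d \geq 2$, I would choose a height-one prime $\mathfrak q$ at which $T$ is regular (equivalently normal) and with $\bar s \notin \mathfrak q$. This is possible because the non-normal locus of $T$ equals $V(\mathfrak c)$ for a nonzero conductor $\mathfrak c$, so only finitely many height-one primes fail to give a DVR, and only finitely many contain $\bar s$, whereas $\dim T \geq 2$ forces infinitely many height-one primes to exist (otherwise $\m$ would be covered by finitely many of them). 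For such $\mathfrak q$ the ring $T_{\mathfrak q}$ is a one-dimensional regular local ring, i.e. a DVR.

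I would then pick $x$ by prime avoidance: let $P'$ be the preimage of $\mathfrak q$ in $R$, let $Q_1,\dots,Q_k$ be the minimal primes of $R$ with $\dim(R/Q_i)=d$ other than $P$, and choose
\[
x \in P' \setminus \Big( P \cup Q_1 \cup \cdots \cup Q_k \cup \pi^{-1}(\mathfrak q^{(2)}) \Big),
\]
where $\pi \colon R \to T$ and $\mathfrak q^{(2)}$ is the symbolic square. Such $x$ exists because $P'$ is contained in none of these ideals ($P \subsetneq P'$; $P' \not\subseteq Q_i$ since $P \subseteq P'$ and $P,Q_i$ are distinct minimal primes; and $\mathfrak q \not\subseteq \mathfrak q^{(2)}$), and only one of them fails to be prime, so the prime avoidance lemma applies regardless of the size of the residue field. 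With this choice: $x$ avoids every top-dimensional minimal prime, so $\dim(R/(x))=d-1$; since $\bar s \notin \mathfrak q$ we get $s \notin P'$ and hence $PR_{P'}=0$, giving $R_{P'}=T_{\mathfrak q}$; the only primes of $R$ inside $P'$ are then $P$ and $P'$, and as $x \in P' \setminus P$ this shows $P'$ is minimal over $(x)$ with $\dim(R/P')=\dim(T/\mathfrak q)=d-1$; finally $\bar x \in \mathfrak q \setminus \mathfrak q^{(2)}$ is a uniformizer of the DVR $T_{\mathfrak q}$, so $(R/(x))_{P'}=T_{\mathfrak q}/\bar x T_{\mathfrak q}$ is a field.

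The step I expect to be the crux is arranging simultaneously that the section meets the regular (normal) locus of $T$ in codimension one and that $x$ cuts there transversally (order one), i.e. combining ``landing in the regular locus'' with ``$\bar x$ being a uniformizer''. This is precisely the content packaged by Flenner's local Bertini theorem; but because we only need a single good minimal prime $P'$ rather than good behavior on an entire open locus, the elementary finiteness-plus-prime-avoidance argument above suffices and, as noted, sidesteps any hypothesis on the residue field.
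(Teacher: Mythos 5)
Your proof is correct, and it takes a genuinely different route from the paper. The paper invokes Flenner's local Bertini theorem to produce an $x$ with $\text{Reg}(R/(x)) \cap \Spec^\circ(R) \supseteq \text{Reg}(R)\cap V(x)\cap \Spec^\circ(R)$, and then checks that a suitable minimal prime of $P+(x)$ lands in this locus; you instead fix the target prime first and only then choose $x$. Concretely, you reduce to the complete (hence Nagata) local domain $T=R/P$, use the nonzero conductor to see that all but finitely many height-one primes of $T$ are regular points, pick one such $\mathfrak q$ avoiding an annihilator witness $s$ of $P$ (so that $R_{P'}\cong T_{\mathfrak q}$ for $P'=\pi^{-1}(\mathfrak q)$), and then select $x\in P'$ by prime avoidance so that it misses all top-dimensional minimal primes and is a uniformizer of the DVR $T_{\mathfrak q}$; every step checks out, including that $P'$ is then minimal over $(x)$ (the only primes inside $P'$ are $P$ and $P'$) and that $\dim(R/P')=d-1$ by catenarity and equidimensionality of $T$. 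What the two approaches buy: the paper's argument is shorter given the black box and controls the entire regular locus of the hyperplane section, but it leans on a delicate theorem (the paper itself cites corrections needed in mixed characteristic); your argument is self-contained, elementary, needs only one good point rather than an open locus --- which is all the application requires --- and visibly places no hypothesis on the residue field.
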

\begin{proof}
Since $R$ is complete, we know that the singular locus of $\Spec(R)$ is closed. Let $V(I)$ be the singular locus of $\Spec(R)$ and our assumption implies that $P\nsupseteq I$. It follows that $P+I$ has height at least one. Let $\{Q_1,\dots,Q_n\}$ be the height one minimal primes of $P+I$ (if the height of $P+I$ is at least two, then this is simply the empty set).
By \cite[(3.1) Satz]{Flenner}, there exists an element $x\in \m$ that avoids $Q_1,\dots,Q_n$ and all minimal primes of $R$ such that
\begin{equation}
\label{eqn: flenner}
\text{Reg}(R/(x)) \cap \Spec^\circ(R) \supseteq \text{Reg}(R) \cap V(x) \cap \Spec^\circ(R)
\end{equation}
where $\Spec^\circ(R)$ denotes the punctured spectrum of $R$ and $\text{Reg}(R)$ denotes the regular locus of $\Spec(R)$. Since $\dim(R)=\dim(R/P)=d$ and by our choice $x$ is not inside any minimal prime of $R$, we know that $$\dim(R/(x))=\dim(R/P+(x))=d-1.$$
Thus there exists a minimal prime $P'$ of $P+(x)$ such that $\dim(R/P')=d-1$. Note that $P'$ is also a minimal prime of $(x)$ since $\dim(R/(x))=d-1$. We next claim that $P'$ is contained in the right hand side of (\ref{eqn: flenner}): it is clear that $P'$ is contained in $V(x)\cap\Spec^\circ(R)$, if $R_{P'}$ is not regular, then $P'\supseteq I$ and hence $P'\supseteq I+(x)$, but then since $P'$ has height one it must coincide with one of $Q_1,\dots, Q_n$ which is a contradiction since $x$ avoids these primes. Thus $P'$ is contained in the right hand side of (\ref{eqn: flenner}) and hence also the left hand side of (\ref{eqn: flenner}). In particular, $(R/(x))_{P'}$ is regular, and thus a field.
\end{proof}

We next record the following simple lemma for one-dimensional rings.

\begin{lemma}
\label{lem: one dimensional complete domain}
Let $(R,\m)$ be a Noetherian complete local domain of dimension one. Then there exist an integer $k$ and a constant $C>0$ such that there exists a sequence of integrally closed ideals $\{I_n\}_n$ satisfying $\m^{kn}\subseteq I_n\subseteq \m^n$ for all $n\geq 1$ and $$\eh (I_n) \leq \left (1 + \frac{C}{n} \right ) \length (R/I_n)$$ for all $n\gg0$.
\end{lemma}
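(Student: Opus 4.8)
The plan is to pass to the normalization and reduce every quantity to numerical data of the singularity. Since $R$ is a complete (hence excellent) local domain of dimension one, its normalization $V := \overline{R}$ in the fraction field is module-finite over $R$ and is a complete DVR; write $v$ for its valuation, $\mathfrak{n}=(t)$ for its maximal ideal, $f := [V/\mathfrak{n} : R/\m]$ for the residue degree, and $\delta := \length_R(V/R) < \infty$ for the delta invariant. Because $\mathfrak{n}\cap R = \m$, we get $\m V = \mathfrak{n}^{e_0}$ for $e_0 := \min\{v(x) : x\in\m\}\geq 1$, and the conductor of $R$ in $V$ equals $\mathfrak{n}^{c_0}$ for some $c_0$, so $\mathfrak{n}^{j}\subseteq R$ whenever $j\geq c_0$. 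The key structural input is that integral closures of $\m$-primary ideals can be read off inside the DVR: every ideal of $V$ is integrally closed, and contraction along the integral extension $R\hookrightarrow V$ preserves integral closure, so $\overline{\m^{a}} = \m^{a}V\cap R = \mathfrak{n}^{ae_0}\cap R$.

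For the construction I would first invoke the reduction/Brian\c{c}on--Skoda bound (\cite[Proposition~5.3.4]{SwansonHuneke}, already used in Theorem~\ref{thm: Watanabe--Yoshida}): there is a constant $c$ with $\overline{\m^{m+c}}\subseteq\m^{m}$ for all $m$. Fixing $k := c+1$ and setting $I_n := \overline{\m^{kn}}$, one gets $I_n\subseteq\m^{kn-c}\subseteq\m^{n}$ (since $kn-c\geq n$ for all $n\geq 1$), while $\m^{kn}\subseteq I_n$ is automatic; thus $\m^{kn}\subseteq I_n\subseteq\m^{n}$ with each $I_n$ integrally closed, as required. Taking the integral closure of a \emph{high} power $\m^{kn}$, rather than of $\m^{n}$ itself, is exactly what makes the upper containment $I_n\subseteq\m^{n}$ possible.

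Next I would compute the two sides. By the displayed identity, $I_n = \mathfrak{n}^{kne_0}\cap R$, and once $n$ is large enough that $kne_0\geq c_0$ we have $\mathfrak{n}^{kne_0}\subseteq R$, so $I_n = \mathfrak{n}^{kne_0}$. Filtering $V/\mathfrak{n}^{kne_0}$ by powers of $\mathfrak{n}$ gives $\length_R(V/\mathfrak{n}^{kne_0}) = kne_0 f$, and comparing along $V\supseteq R\supseteq\mathfrak{n}^{kne_0}$ yields
\[
\length_R(R/I_n) = kne_0 f - \delta \qquad (n\gg 0).
\]
For the multiplicity, the same computation applied to integral closures of powers, $\overline{I_n^{m}} = \mathfrak{n}^{kne_0 m}\cap R$, gives $\length_R(R/\overline{I_n^{m}}) = kne_0 m f - \delta$ for $m\gg0$; since multiplicity may be computed along integral closures of powers, $\eh(I_n) = \lim_{m}\length_R(R/\overline{I_n^{m}})/m = kne_0 f$. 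Subtracting, the variable terms cancel and one is left with the constant
\[
\eh(I_n) - \length_R(R/I_n) = \delta \qquad (n\gg 0).
\]
Hence $\eh(I_n)/\length_R(R/I_n) = 1 + \delta/(kne_0 f - \delta)\leq 1 + C/n$ for $n\gg0$ with, say, $C := \max(\delta,1)$; and if $\delta = 0$ then $R=V$ is already a DVR and $\eh(I_n)=\length_R(R/I_n)$ outright.

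The heart of the argument, and the step most likely to require care, is the bookkeeping translating $\m$-adic data into $\mathfrak{n}$-adic data on the DVR: pinning down $\m V = \mathfrak{n}^{e_0}$, choosing $k$ from the Brian\c{c}on--Skoda constant so the sandwich $\m^{kn}\subseteq I_n\subseteq\m^{n}$ holds, and using the conductor to force $\mathfrak{n}^{kne_0}\subseteq R$ for large $n$. Once these are in place, the decisive phenomenon is that the defect $\eh(I_n)-\length_R(R/I_n)$ collapses to the single constant $\delta$, which is precisely what drives the ratio down to $1$ at rate $1/n$.
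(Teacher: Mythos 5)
Your proof is correct, and it reaches the same ideals $I_n=\overline{\m^{kn}}$ as the paper, but by a genuinely different mechanism. The paper's proof stays inside $R$: it invokes Rees's theorem that the integral closure filtration is eventually stable, $\overline{\m^{kn}}=(\overline{\m^k})^n$ for some $k$ and all $n$, so that $\length(R/I_n)$ is the Hilbert--Samuel polynomial of the single ideal $\overline{\m^k}$, which in dimension one reads $n\eh(\overline{\m^k})-D=\eh(\overline{\m^{kn}})-D$; the containment $I_n\subseteq\m^n$ is then immediate from $\overline{\m^k}\subseteq\m$. You instead pass to the normalization $V$, which is a complete DVR because $R$ is a complete one-dimensional local domain, identify $\overline{\m^{a}}=\m^aV\cap R=\mathfrak n^{ae_0}\cap R$ via contraction of integrally closed ideals along the integral extension, and use the conductor to turn $I_n$ into an honest power of $\mathfrak n$ for $n\gg0$; the Brian\c{c}on--Skoda-type bound $\overline{\m^{m+c}}\subseteq\m^m$ (valid since $R$ is analytically unramified) with $k=c+1$ gives the sandwich $\m^{kn}\subseteq I_n\subseteq\m^n$. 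All the individual steps check out, including the slightly glossed-over fact that $\eh(I)=\lim_m\length(R/\overline{I^m})/m$, which again follows from $\overline{I^{m+c}}\subseteq I^m$. What your route buys is an exact identification of the paper's constant $D$: you show $\eh(I_n)-\length(R/I_n)=\delta=\length_R(V/R)$ for $n\gg0$, so the rate $C/n$ is controlled by the delta invariant of the curve singularity (and the ratio is identically $1$ exactly when $R$ is a DVR). What the paper's route buys is brevity and slightly more robustness: it needs only Rees's stability of the integral closure filtration plus one-dimensional Hilbert--Samuel theory, with no appeal to the structure of the normalization.
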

\begin{proof}
By a result of Rees \cite{Rees},
there exists an integer $k$ such that $\overline{\mf m^{kn}} =  (\overline{\mf m^{k}})^n$ for all $n \geq 1$.
It follows from Hilbert--Samuel theory that
there exists a constant $D$ such that for all $n \gg 0$ we have
\[
\length(R/\overline{\mf m^{kn}}) =
\length(R/(\overline{\mf m^{k}})^n)
= n \eh(\overline{\mf m^{k}}) - D
= \eh(\overline{\mf m^{kn}}) -D.
\]
Now it is easy to see that the result follows by setting $I_n=\overline{\m^{kn}}$.
\end{proof}

The following is our key lemma.
\begin{lemma}
\label{lem: main approx}
Let $(R, \mf m)$ be a Noetherian local ring of dimension $d \geq 1$ that has a minimal prime $P$ of $\widehat{R}$ such that $\dim(\widehat{R}/P)=d$ and $\widehat{R}_P$ is a field.
Then there exist a constant $C>0$ and a sequence of $\m$-primary integrally closed ideals $\{J_n\}_n$ such that
$$\eh (J_n) \leq \left (1 + \frac{C}{n} \right ) \length (R/J_n)$$
for all $n\gg0$ and that $\length(R/J_n)\to\infty$. In particular, we have
$$\inf_{\sqrt{I} = \mf m, I=\overline{I}} \left\{\frac{\eh(I)}{\length(R/I)}\right\} \leq 1.$$
\end{lemma}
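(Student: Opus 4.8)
The plan is to reduce to the complete case and then induct on $d = \dim R$. Because there is a one-to-one correspondence between integrally closed $\m$-primary ideals of $R$ and of $\widehat R$ under extension and contraction, and because both colength and Hilbert--Samuel multiplicity are preserved by completion, it is enough to build the sequence $\{J_n\}$ inside $\widehat R$ and then contract back to $R$; so I would assume $R = \widehat R$ is complete, so that the hypothesis gives a genuine minimal prime $P$ of $R$ with $\dim(R/P) = d$ and $R_P$ a field (hence $\length(R_P) = 1$). The whole induction rests on the elementary inequality
\[
\eh(J, R) \le \eh\big(J(R/xR),\, R/xR\big),
\]
valid for any $\m$-primary ideal $J$ and any $x \in J$ with $\dim(R/xR) = d-1$. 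This follows from the short exact sequence $0 \to R/(J^{n+1} :_R x) \xrightarrow{\,\cdot x\,} R/J^{n+1} \to (R/xR)/J^{n+1}(R/xR) \to 0$ together with the containment $J^n \subseteq (J^{n+1} :_R x)$: the latter forces $\length\big((R/xR)/J^{n+1}(R/xR)\big) \ge \length(R/J^{n+1}) - \length(R/J^n)$, and comparing the leading coefficients of the two Hilbert--Samuel polynomials gives the claim (equivalently, a superficial element realizes the minimum on the right).

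For the inductive step $d \ge 2$, I would apply Lemma~\ref{lem.Flenner} to produce $x \in \m$ with $\dim(R/xR) = d-1$ and a minimal prime $P'$ of $R/xR$ with $\dim\big((R/xR)/P'\big) = d-1$ and $(R/xR)_{P'}$ a field. Then $R/xR$ is complete and again satisfies the hypothesis of the statement, now in dimension $d-1$, so the inductive hypothesis supplies integrally closed $\m$-primary ideals $\overline{J_n} \subseteq R/xR$ with $\eh(\overline{J_n}, R/xR) \le (1 + C/n)\,\length\big((R/xR)/\overline{J_n}\big)$ and colength tending to infinity. I would take $J_n$ to be the preimage of $\overline{J_n}$ in $R$. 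As a contraction of an integrally closed ideal it is integrally closed; it is $\m$-primary with $\length(R/J_n) = \length\big((R/xR)/\overline{J_n}\big) \to \infty$; and since $xR \subseteq J_n$, the displayed inequality yields
\[
\eh(J_n, R) \le \eh\big(J_n(R/xR),\, R/xR\big) = \eh(\overline{J_n}, R/xR) \le \Big(1 + \tfrac{C}{n}\Big)\length(R/J_n),
\]
as required.

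For the base case $d = 1$ no further cutting is possible, so I would instead use Lemma~\ref{lem: one dimensional complete domain} on the one-dimensional complete domain $B = R/P$, obtaining integrally closed ideals $I_n = \overline{\m_B^{kn}}$ with $\eh(I_n, B) \le (1 + C/n)\length(B/I_n)$ and $\length(B/I_n)$ growing linearly in $n$. Setting $J_n$ to be the preimage of $I_n$ in $R$ (again integrally closed and $\m$-primary, with $\length(R/J_n) = \length(B/I_n)$), the associativity formula gives
\[
\eh(J_n, R) = \length(R_P)\,\eh(I_n, B) + \sum_{\substack{\mathfrak q \in \Minh(R) \\ \mathfrak q \ne P}} \length(R_\mathfrak q)\,\eh(J_n, R/\mathfrak q).
\]
Here $\length(R_P) = 1$ by the field hypothesis, so the first term equals $\eh(I_n, B) \le (1 + C/n)\length(R/J_n)$. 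For each remaining $\mathfrak q$ the ideal $J_n(R/\mathfrak q)$ contains the nonzero image of $P$, which in the one-dimensional domain $R/\mathfrak q$ is $\m$-primary, so by monotonicity of multiplicity $\eh(J_n, R/\mathfrak q) \le \eh\big((P+\mathfrak q)/\mathfrak q,\, R/\mathfrak q\big)$, a constant independent of $n$. Thus the extra terms sum to a constant $M$, and since $\length(R/J_n) \gtrsim n$ this is absorbed into a term of the form $(M'/n)\length(R/J_n)$, giving $\eh(J_n, R) \le (1 + C'/n)\length(R/J_n)$.

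Either way the sequence $\{J_n\}$ satisfies $\eh(J_n)/\length(R/J_n) \le 1 + C/n \to 1$ with $\length(R/J_n) \to \infty$, which proves the lemma and the stated infimum bound. The step I expect to require the most care is controlling the multiplicity under the cut by $x$: the inequality $\eh(J, R) \le \eh(J(R/xR), R/xR)$ prevents it from inflating, but for this it is essential that Flenner's element be a true parameter (so that the dimension drops by exactly one) and that the field condition propagate down the induction. It is precisely the equality $\length(R_P) = 1$ that keeps the distinguished component from contributing a leading coefficient larger than $1$, which is what would otherwise push the infimum strictly above $1$.
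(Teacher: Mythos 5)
Your proof is correct and follows essentially the same route as the paper: reduce to the complete case, induct on dimension via Lemma~\ref{lem.Flenner} together with the inequality $\eh(J,R)\le \eh(J(R/xR),R/xR)$ for $x\in J$ a parameter, and handle the base case $d=1$ by pulling back the ideals of Lemma~\ref{lem: one dimensional complete domain} from $R/P$ and controlling the other components via the additivity formula. The only (harmless) deviation is in the base case, where you bound the contributions of the minimal primes $\mathfrak q\neq P$ by monotonicity of multiplicity using $P\subseteq J_n$, while the paper bounds them with Lech's inequality and the estimate $\length(R/(\mathfrak q+J_n))\le\length(R/(\mathfrak q+P))$; both yield the same $O(1/n)$ error term.
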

\begin{proof}
We may assume that $R=\widehat{R}$ is complete. We first suppose $d=1$, so that $R/P$ is a complete local domain of dimension one. By Lemma \ref{lem: one dimensional complete domain} there exist an integer $k$, a constant $C'>0$, and a sequence
of integrally closed ideals $\{I_n\}_n$ of $R/P$ such that $\mf m^{kn}(R/P) \subseteq I_n \subseteq \mf m^n(R/P)$ and such that
$$\eh(I_n, R/P) \leq (1 + C'/n) \length ((R/P)/I_n(R/P))$$ for all $n\gg0$. Let $J_n$ be the pre-image of $I_n$ in $R$. Then $J_n$ is $\m$-primary and integrally closed in $R$.
By the additivity formula for multiplicities, we have
\[
\eh(J_n) = \sum_{\mf p \in \Min(R)} \length (R_\mf p) \eh (J_n, R/\mf p)
= \eh (I_n, R/P) + \sum_{\mf p \neq P} \length (R_\mf p) \eh (J_n, R/\mf p).
\]
By Lech's inequality, $\length (R_\mf p) \eh (J_n, R/\mf p) \leq
\length (R_\mf p)\eh(R/\mf p) \length (R/(\mf p + J_n)) \leq \eh(R) \length (R/(\mf p + J_n))$. It follows that
\[
\eh(J_n) \leq \eh (I_n, R/P) + \eh(R) \sum_{\mf p \neq P} \length (R/(\mf p + J_n)).
\]
Now, if $\mf p \neq P$ then $\dim (R/(\mf p + P))=0$. Since $J_n\subseteq \m^n+P$, we have
\[
\frac{\length (R/(\mf p + J_n))}{\length (R/J_n)}
\leq \frac{\length (R/(\mf p + P))}{\length (R/(\mf m^{n}+P))} \leq \frac{C(\mf p)}{n}
\]
where $C(\mf p)>0$ is a constant depending on $\mf p$. Therefore we have
\[
\frac{\eh(J_n)}{\length (R/J_n)}
= \frac{\eh (I_n, R/P) + \eh(R) \sum_{\mf p \neq P} \length (R/(\mf p + J_n))}{\length (R/J_n)}
\leq 1+\frac{C}{n}.
\]
for all $n\gg0$ for some constant $C>0$ (depending on $C'$, $C(\mf p)$). It is clear that $\length(R/J_n)\to\infty$. This completes the proof when $d=1$.

Now we assume $d \geq 2$ and we use induction. By Lemma~\ref{lem.Flenner}, there exists an element $x \in \mf m$ such that
$S = R/(x)$ has dimension $d-1$ and $S$ satisfies the same assumptions as $R$. By induction, we have a sequence of integrally closed $\m$-primary ideals $\{I_n\}_n$ in $S$
such that $\eh (I_n) \leq (1 + C/n) \length (S/I_n)$.
Let $J_n$ be the pre-image of $I_n$ in $R$. Then $J_n$ is $\m$-primary and integrally closed.
Therefore, as $x \in J_n$, we have
\[
\frac{\eh(J_n)}{\length (R/J_n)} \leq \frac{\eh(J_n, R/(x))}{\length (R/J_n)} =
\frac{\eh(I_n)}{\length (S/I_n)} \leq 1 + \frac{C}{n}.
\]
This completes the proof.
\end{proof}

Now we are ready to state and prove our main result on the ratio between multiplicity and colength for integrally closed ideals.

\begin{theorem}
\label{thm: inf of integrally closed}
Let $(R, \mf m)$ be a Noetherian local ring of dimension $d \geq 1$ such that $\widehat{R}$ is equidimensional.
Then we have
\[
\inf_{\sqrt{I} = \mf m, I=\overline{I}} \left\{\frac{\eh(I)}{\length(R/I)}\right\} = 1
\]
if and only if there exists a minimal prime $P$ of $\widehat{R}$ such that $\widehat{R}_P$ is a field.

Moreover, if $\widehat{R}$ is unmixed, then
the infimum is achieved if and only if $R$ is regular.
\end{theorem}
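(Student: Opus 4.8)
The plan is to reduce everything to the complete case and then combine two bounds: the upper bound supplied by Lemma~\ref{lem: main approx} and a lower bound obtained by stripping off the minimal primes. Completion preserves multiplicity, colength, integral-closedness of $\m$-primary ideals (property~(5)), regularity, and the condition that a minimal prime localizes to a field, so the infimum and every assertion are unchanged upon replacing $R$ by $\widehat R$; thus I assume $R=\widehat R$ is complete and equidimensional.

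For the lower bound, fix an integrally closed $\m$-primary $I$. Additivity of multiplicity over the (all top-dimensional) minimal primes gives $\eh(I)=\sum_{P\in\Min(R)}\length(R_P)\,\eh(I,R/P)$, and since $\sum_{P}\eh(I,R/P)=\eh(I\red{R},\red{R})$ this yields $\eh(I)\geq\big(\min_{P}\length(R_P)\big)\,\eh(I\red{R},\red{R})$. As $I\supseteq N(R)$ we have $\length(\red{R}/I\red{R})=\length(R/I)$, so the whole lower bound reduces to the reduced case, i.e. to the inequality
\[
(\star)\qquad \eh(J)\geq\length(S/J)\quad\text{for integrally closed $\m$-primary $J$ in a reduced equidimensional complete $S$.}
\]
Granting $(\star)$ we get $\eh(I)/\length(R/I)\geq\min_{P}\length(R_P)$. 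Now a minimal prime $P$ has $\dim R_P=0$, so $R_P$ is a field exactly when $\length(R_P)=1$; if such $P$ exists the bound gives $\inf\geq 1$ while Lemma~\ref{lem: main approx} gives $\inf\leq 1$, so $\inf=1$, and if every $\length(R_P)\geq 2$ then $\inf\geq 2$. This proves $\inf=1$ iff some $\widehat R_P$ is a field.

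To prove $(\star)$ I would first pass to an infinite residue field via the gentle base change $R\to R(t)$, which preserves reducedness, equidimensionality, integral closure, multiplicity and colength; then, using a minimal reduction, one is reduced to a parameter ideal $J$ and a Noether normalization $A\hookrightarrow S$ with $A$ regular and $J=\m_A S$, where $\eh(J)=\operatorname{rank}_A S$ and the point is to bound $\length(S/\overline{\m_A S})$ by this rank (equivalently, to invoke the classical inequality $\length(S/\overline J)\leq\eh(J)$ for formally equidimensional rings). I expect $(\star)$ to be the main obstacle. Unlike the Hilbert--Kunz case there is no Frobenius module of large rank to feed into Corollary~\ref{cor: integrally closed rank}, so one cannot simply transplant the proof of Corollary~\ref{cor: HK integral closed and singular}; the content is that the non-Cohen--Macaulay defect is controlled by the integral-closure defect, which must come from classical multiplicity theory rather than the rank inequality.

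For the ``moreover'' part — which I read as asserting that the value $1$ is attained (some integrally closed $I$ has $\eh(I)=\length(R/I)$) iff $R$ is regular — the reverse implication is immediate, since a regular $R$ has $\eh(\m)=1=\length(R/\m)$. For the forward implication, suppose $\eh(I)=\length(R/I)$; equality propagates through the chain above, forcing $\length(R_P)=1$, i.e. $N(R)_P=0$, for every minimal prime $P$. Because $R$ is unmixed, $\Ass(N(R))\subseteq\Ass(R)=\Min(R)$, so $N(R)=0$ and $R$ is reduced, and $(\star)$ becomes an equality for $R$ itself. I would therefore sharpen $(\star)$ to the relative-drop form $\eh(J)\geq\length(S/J)+\eh(S)-1$ (with $\m$ as the larger ideal), the exact Hilbert--Samuel analogue of the bound behind Corollary~\ref{cor: HK integral closed and singular}; equality then gives $\eh(R)=1$, whence $R$ is regular by Nagata's theorem for unmixed rings — exactly paralleling the implications Corollary~\ref{cor: relative HK inequality}$\Rightarrow$Corollary~\ref{cor: HK integral closed and singular}$\Rightarrow$Theorem~\ref{thm: Watanabe--Yoshida}.
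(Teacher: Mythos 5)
Your architecture matches the paper's: the upper bound comes from Lemma~\ref{lem: main approx}, the lower bound from the inequality $\eh(I)\geq \length(R/I)$ for integrally closed $\m$-primary ideals in formally equidimensional rings, and the non-reduced case is handled by additivity over minimal primes. The one genuine gap is exactly the point you flag: your inequality $(\star)$ is not classical, and your sketch for it is essentially circular --- after passing to a minimal reduction $(\ul x)$ of $J$, the statement $\eh((\ul x))\geq \length(S/\overline{(\ul x)})$ \emph{is} $(\star)$ restated, and for non-Cohen--Macaulay $S$ one has $\eh((\ul x))<\length(S/(\ul x))$, so the integral closure must genuinely absorb the deficit; there is no off-the-shelf ``classical'' inequality to invoke. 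The paper does not prove this step either: it cites it, together with the equality case needed for the ``moreover'' assertion, as \cite[Corollary~12]{MQS}, the main result of the prequel. So your proof is correct modulo importing that result, and your instinct that it is the crux is right.

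Two smaller comparisons. For the ``only if'' direction your argument is a genuine (and cleaner) alternative: from $\eh(I)=\sum_P\length(R_P)\,\eh(I,R/P)\geq\bigl(\min_P\length(R_P)\bigr)\,\eh(I\red{R},\red{R})$ and $(\star)$ applied to $\red{R}$ you get $\inf\geq 2$ when no $R_P$ is a field, whereas the paper invokes \cite[Theorems~2.4 and 4.4]{KMQSY} to produce a bound of the form $1+C_1C_2$; you should, however, note explicitly that $I\red{R}$ is integrally closed in $\red{R}$ (this holds because integral closure is checked modulo minimal primes and $I\supseteq N(R)$). For the ``moreover'' direction, your reading of the statement and the equality-forces-$\length(R_P)=1$ step are correct, but the proposed sharpening $\eh(J)\geq\length(S/J)+\eh(S)-1$ follows from Proposition~\ref{prop: improved Watanabe} together with Rees's theorem only over an algebraically closed residue field, and the reduction to that case (Lemma~\ref{lem: gonflement} plus preservation of integral closedness under the base change) requires $R/\m$ perfect --- an assumption Theorem~\ref{thm: inf of integrally closed} does not make, which is precisely why the analogous statements Corollary~\ref{cor: relative HK inequality} and Theorem~\ref{thm: inf of relative drop of integrally closed}(1) carry a perfectness hypothesis. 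The paper sidesteps this because the equality characterization is already part of \cite[Corollary~12]{MQS}.
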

\begin{proof}
First of all, \cite[Corollary~12]{MQS}
asserts that $\eh(I) \geq \length(R/I)$ for any integrally closed $\m$-primary ideal and that equality holds for some $I$ implies $R$ is regular (when $\widehat{R}$ is unmixed). This shows the second assertion, and combined with Lemma \ref{lem: main approx}, this proves the ``if" direction of the first assertion.

It remains to prove the ``only if" direction of the first assertion. We may assume $R=\widehat{R}$ is complete. Suppose there is no minimal prime $P$ such that $R_P$ is a field. Then the nilradical $N(R)$ of $R$ is supported on the entire $\Spec (R)$, i.e., $N(R)_\mf p \neq 0$ for all $\mf p\in\Spec(R)$.
Hence, we may apply \cite[Theorem~4.4]{KMQSY} to
find a constant $C_1 > 0$
such that $$\length (N(R)/IN(R)) \geq C_1 \length (R/I)$$ for every $\mf m$-primary ideal $I$. Furthermore, by \cite[Theorem~2.4]{KMQSY}, there exists another constant $C_2>0$ such that $$\eh(I, N(R)) \geq C_2\length (N(R)/IN(R)).$$
By the additivity property of multiplicity, we have
\[
\frac{\eh(I)}{\length(R/I)}
= \frac{\eh(I, \red{R}) + \eh(I, N(R))}{\length(R/I)}
\geq \frac{\eh(I, \red{R}) + C_2\length (N(R)/IN(R))}{\length(R/I)}
\geq \frac{\eh(I, \red{R})}{\length(R/I)} + C_1C_2.
\]
Since $N(R)$ is contained in any integrally closed ideal,
we may apply \cite[Corollary~12]{MQS} to $\red{R}$ to see that $\eh(I, \red{R}) \geq \length(\red{R}/I\red{R}) =\length(R/I)$. Therefore we have
$$\eh(I)/\length(R/I) \geq (1 + C_1C_2) > 1$$
for every integrally closed $\mf m$-primary ideal $I$.
\end{proof}

Inspired by the relation between F-signature and the relative drop of the Hilbert--Kunz multiplicity (see \cite{WatanabeYoshidaMin,PolstraTucker}). We next consider the infimum on the relative drop of the Hilbert--Samuel multiplicity for integrally closed ideals.

\begin{theorem}
\label{thm: inf of relative drop of integrally closed}
Let $(R, \mf m)$ be a Noetherian local ring of dimension $d \geq 1$. Then
\begin{align*}
&\inf_{\substack{I \subsetneq J, \sqrt{I} = \mf m \\ I=\overline{I}}} \left \{\frac{\eh (I) - \eh (J)}{\length (R/I) - \length (R/J)} \right\} =
\inf_{\substack{I \subsetneq J, \sqrt{I} = \mf m \\ I = \overline{I}, J=\overline{J}}} \left \{\frac{\eh (I) - \eh (J)}{\length (R/I) - \length (R/J)}  \right\}.
\end{align*}
Furthermore, we have
\begin{enumerate}
  \item If $\widehat{R}$ is equidimensional and $R/\m$ is perfect, then the infimum above is $\geq 1$.
  \item If there exists a minimal prime $P$ of $\widehat{R}$ such that $\dim(\widehat{R}/P)=d$ and $\widehat{R}_P$ is a field, then the infimum above is $\leq 1$.
\end{enumerate}
\end{theorem}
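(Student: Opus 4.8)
The plan is to handle the three assertions in turn, beginning with the equality of the two infima, which is essentially formal. Every pair $(I,J)$ with both ideals integrally closed is in particular a pair with $I$ integrally closed, so the infimum on the left (fewer constraints) is $\leq$ the one on the right. For the reverse inequality I would take an arbitrary admissible pair $I \subsetneq J$ with $I = \overline{I}$ and replace $J$ by $\overline{J}$. Then $I \subsetneq J \subseteq \overline{J}$, so $(I,\overline{J})$ is admissible for the right-hand infimum; the numerator is unchanged since $\eh(J) = \eh(\overline{J})$, while the denominator can only grow because $\length(R/\overline{J}) \leq \length(R/J)$. As the common numerator $\eh(I)-\eh(J)$ is nonnegative and the denominators are positive, the ratio for $(I,\overline{J})$ is $\leq$ that for $(I,J)$, giving $\inf_{\mathrm{right}} \leq \inf_{\mathrm{left}}$ and hence equality. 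Thanks to this, in parts (1) and (2) I may assume both $I$ and $J$ are integrally closed.

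For part (1), the goal is the integer inequality $\eh(I)-\eh(J) \geq \length(J/I)$ for integrally closed $\m$-primary $I \subsetneq J$; dividing by $\length(R/I)-\length(R/J) = \length(J/I)$ then gives the bound $\geq 1$. I would first pass to $\widehat{R}$, which is excellent and equidimensional, and then, exactly as in the proof of Corollary~\ref{cor: relative HK inequality}, use gonflement (Lemma~\ref{lem: gonflement}) to build a flat local extension $R \to R'$ with $R'/\m'$ algebraically closed. Since $R/\m$ is perfect this is a regular, hence normal, morphism, so $IR'$ and $JR'$ stay integrally closed, $R'$ stays equidimensional by Lemma~\ref{lem: equidimensional}, and all relevant colengths and multiplicities are preserved; thus I may assume $R/\m$ algebraically closed. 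Now Proposition~\ref{prop: improved Watanabe} supplies a chain of integrally closed ideals $I = I_0 \subsetneq I_1 \subsetneq \cdots \subsetneq I_\ell = J$ with $\length(I_i/I_{i-1}) = 1$ and $\ell = \length(J/I)$. At each step $I_{i-1}$ is integrally closed and strictly contained in $I_i$, so $I_i \not\subseteq \overline{I_{i-1}} = I_{i-1}$; by Rees's criterion (\cite{Rees1961}, applicable since $R'$ is equidimensional) this forces $\eh(I_{i-1}) \neq \eh(I_i)$, and as multiplicities are integers with $\eh(I_{i-1}) \geq \eh(I_i)$, we get $\eh(I_{i-1}) - \eh(I_i) \geq 1$. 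Telescoping over the $\ell$ steps yields $\eh(I) - \eh(J) \geq \ell = \length(J/I)$. I expect this telescoping-plus-Rees argument, together with checking that the base change preserves integral closure and equidimensionality, to be the main work.

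For part (2), I would invoke the key Lemma~\ref{lem: main approx}, which under the hypothesis on the minimal prime $P$ produces a sequence of integrally closed $\m$-primary ideals $\{J_n\}$ with $\eh(J_n) \leq (1 + C/n)\length(R/J_n)$ and $\length(R/J_n) \to \infty$. I would test the infimum on the pairs $J_n \subsetneq \m$, which are admissible for large $n$ (both ideals integrally closed and $\length(R/J_n) > 1$). Writing the ratio as
\[
\frac{\eh(J_n) - \eh(R)}{\length(R/J_n) - 1} \leq \frac{(1 + C/n) - \eh(R)/\length(R/J_n)}{1 - 1/\length(R/J_n)},
\]
and letting $n \to \infty$ so that $C/n \to 0$ and $1/\length(R/J_n) \to 0$, the right-hand side tends to $1$. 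Hence the infimum is $\leq 1$. This part is a direct consequence of Lemma~\ref{lem: main approx} and requires no new ideas.
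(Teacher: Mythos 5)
Your proposal is correct and follows essentially the same route as the paper's proof: the same formal replacement of $J$ by $\overline{J}$ for the equality of the two infima, the same reduction via completion and gonflement (Lemma~\ref{lem: gonflement}, Lemma~\ref{lem: equidimensional}) to an algebraically closed residue field followed by Watanabe's chain (Proposition~\ref{prop: improved Watanabe}) and Rees's theorem for part (1), and the same test pairs $(J_n,\m)$ from Lemma~\ref{lem: main approx} for part (2). No gaps.
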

\begin{proof}
Clearly, for any $I\subsetneq J$ we have
\[
\frac{\eh (I) - \eh (J)}{\length (R/I) - \length (R/J)}
\geq \frac{\eh (I) - \eh (\overline{J})}{\length (R/I) - \length (R/\overline{J})}
\]
and, since the second infimum is taken over a smaller set,
the two infima are always equal. It remains to prove $(1)$ and $(2)$.

\begin{proof}[Proof of $(1)$] First of all, if $R/\m$ is algebraically closed, then by Proposition~\ref{prop: improved Watanabe}, for all $\m$-primary integrally closed ideals $I\subsetneq J$, we have
$$\eh (I) - \eh(J) = \sum_{i = 1}^{\length (J/I)} \left( \eh(I_i) - \eh (I_{i-1}) \right)
\geq \length (J/I)=\length(R/I)-\length(R/J). $$
Here the only inequality used the fact that $\eh(I_i)\geq \eh(I_{i-1})+1$ since both ideals are integrally closed and $\widehat{R}$ is equidimensional. Thus the infimum in question is $\geq 1$.

Now we assume $k=R/\m$ is perfect. We may assume $R=\widehat{R}$ is complete. By Lemma~\ref{lem: gonflement}, there exists a flat local extension $R\to R'$ with $R'/\m R'$ an algebraically closed field. Replacing $R'$ by $\widehat{R'}$ to assume $R'$ is complete, we know that $R\to R'$ is a regular morphism by Lemma~\ref{lem: gonflement}. In particular, $R\to R'$ is a normal morphism and thus $IR'$ is integrally closed whenever $I$ is integrally closed by \cite[Theorem 19.2.1]{SwansonHuneke}. Moreover, $R'$ is equidimensional by Lemma~\ref{lem: equidimensional} and it is clear that $\eh(I)=\eh(IR')$ and $\length(R/I)=\length(R'/IR')$.
It follows from these discussions that
$$\inf_{\substack{I \subsetneq J, \sqrt{I} = \mf m \\ I = \overline{I}, J=\overline{J}}} \left \{\frac{\eh (I) - \eh (J)}{\length (R/I) - \length (R/J)}  \right\} \geq \inf_{\substack{I' \subsetneq J', \sqrt{I'} = \mf m R' \\ I' = \overline{I'}, J'=\overline{J'}}} \left \{\frac{\eh (I') - \eh (J')}{\length (R'/I') - \length (R'/J')} \right\}. $$
Since $R'$ is (complete and) equidimensional with $R'/\m R'$ algebraically closed, we know that the latter infimum is $\geq 1$. The result follows.
\end{proof}

\begin{proof}[Proof of $(2)$]
By Lemma \ref{lem: main approx}, there exists a constant $C>0$ and a sequence of
integrally closed $\m$-primary ideals $\{I_n\}_n$ such that $\eh (I_n)\leq (1 + C/n) \length (R/I_n)$
and $\length (R/I_n)\to\infty$ as $n\to\infty$. It follows that
\[
\inf_{\substack{I \subsetneq J, \sqrt{I} = \mf m \\ I = \overline{I}, J=\overline{J}}} \left \{\frac{\eh (I) - \eh (J)}{\length (R/I) - \length (R/J)}  \right\} \leq \inf_n \frac{\eh (I_n) - \eh (\mf m)}{\length (R/I_n) - \length (R/\mf m)}
\leq \lim_{n \to \infty }
\frac{(1 + C/n) \length (R/I_n) - \eh (\mf m)}{\length (R/I_n) - \length (R/\mf m)}
= 1
\]
since $\length(R/I_n)\to\infty$. This completes the proof of $(2)$.
\end{proof}
\end{proof}

In Theorem~\ref{thm: inf of integrally closed} (and Theorem~\ref{thm: inf of relative drop of integrally closed} (1)), we assumed $\widehat{R}$ is equidimensional. We next point out that, when $\widehat{R}$ is not equidimensional, both infima studied above are $0$. This should be viewed as a generalization of \cite[Theorem 1 (1)]{StuckradVogel} (essentially, the result is proved there when the infimum is ranged over all $\m$-primary ideals).

\begin{proposition}
\label{prop: non equidimensional}
Let $(R, \mf m)$ be a Noetherian local ring of dimension $d \geq 1$ such that $\widehat{R}$ is not equidimensional. Then
$$\inf_{\sqrt{I} = \mf m, I=\overline{I}} \left\{\frac{\eh(I)}{\length(R/I)}\right\}= \inf_{\substack{I \subsetneq J, \sqrt{I} = \mf m \\ I = \overline{I}, J=\overline{J}}} \left \{\frac{\eh (I) - \eh (J)}{\length (R/I) - \length (R/J)} \right\}=0.$$
\end{proposition}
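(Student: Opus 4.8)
The plan is to reduce both infima to the construction of a single sequence of integrally closed $\m$-primary ideals. First I would replace $R$ by $\widehat{R}$: passing to the completion preserves colengths and the integral closure of $\m$-primary ideals and leaves the multiplicity unchanged, so both infima agree for $R$ and $\widehat{R}$ (note also that a one-dimensional complete local ring is automatically equidimensional, so the hypothesis forces $d\geq 2$). It then suffices to produce integrally closed $\m$-primary ideals $\{I_n\}$ with $\length(R/I_n)\to\infty$ but $\eh(I_n)\leq M$ for a constant $M$ independent of $n$. Indeed, the first infimum is then at most $\eh(I_n)/\length(R/I_n)\to 0$, and for the second I would test the relative drop on the pairs $I_n\subsetneq \m$ (legitimate for $n\gg 0$, since then $\length(R/I_n)>1$): here $0\leq \eh(I_n)-\eh(\m)\leq M$ while the denominator $\length(R/I_n)-1\to\infty$. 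As both infima are manifestly nonnegative, each will equal $0$.

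To build such $I_n$ I would exploit the fact that multiplicity only sees the top-dimensional components. Write $\mf a=\bigcap_{\dim R/\mf p=d}\mf p$ for the intersection of the top-dimensional minimal primes. Since $R=\widehat{R}$ is not equidimensional there is a minimal prime $Q$ with $s:=\dim R/Q<d$ (and necessarily $s\geq 1$), and $\mf a\not\subseteq Q$. The heart of the argument is to find a prime $\mf P\supseteq Q$ with $\dim R/\mf P=1$ such that $\mf P+\mf a$ is $\m$-primary, that is, a curve lying on the component $V(Q)$ which meets all top-dimensional components only at the closed point. The value of this condition is that, because $\mf a\subseteq\mf p$ for every top-dimensional minimal prime $\mf p$, the image $\mf P(R/\mf p)=(\mf P+\mf p)/\mf p$ is then $\m$-primary in the domain $R/\mf p$.

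Granting $\mf P$, I would apply Lemma~\ref{lem: one dimensional complete domain} to the one-dimensional complete local domain $R/\mf P$ to obtain integrally closed $\m$-primary ideals $\bar I_n\subseteq R/\mf P$ with $\length((R/\mf P)/\bar I_n)\to\infty$, and let $I_n$ be the preimage of $\bar I_n$ in $R$. Since the preimage of an integrally closed ideal is integrally closed, each $I_n$ is integrally closed; it is $\m$-primary with $\length(R/I_n)=\length((R/\mf P)/\bar I_n)\to\infty$, and $I_n\supseteq\mf P$. For the multiplicity I would use the additivity formula exactly as in the proof of Lemma~\ref{lem: main approx}: for every top-dimensional minimal prime $\mf p$ we have $I_n(R/\mf p)\supseteq \mf P(R/\mf p)$, an $\m$-primary ideal independent of $n$, so $\eh(I_n,R/\mf p)\leq \eh(\mf P(R/\mf p),R/\mf p)$ and therefore
\[
\eh(I_n)=\sum_{\dim R/\mf p=d}\length(R_\mf p)\,\eh(I_n,R/\mf p)\leq \sum_{\dim R/\mf p=d}\length(R_\mf p)\,\eh(\mf P(R/\mf p),R/\mf p)=:M,
\]
a constant, as required.

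The main obstacle is the existence of the prime $\mf P$, which I would obtain by a dimension count inside the complete local domain $R/Q$ of dimension $s$. There the closed set $V(\mf a(R/Q))$ has dimension at most $s-1$ (as $\mf a\not\subseteq Q$), so cutting by $s-1$ sufficiently general elements of $\m$, chosen one at a time by prime avoidance (which is valid over any residue field), produces a one-dimensional section whose defining ideal $\mf b\supseteq Q$ satisfies that $\mf b+\mf a$ is $\m$-primary; taking $\mf P$ to be a one-dimensional minimal prime of $\mf b$ then gives $\mf P+\mf a\supseteq \mf b+\mf a$ still $\m$-primary, and when $s=1$ one simply takes $\mf P=Q$. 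It is precisely at this step that non-equidimensionality is used: were $R$ equidimensional, $\mf a$ would be the nilradical and $\mf P+\mf a$ could never be $\m$-primary, consistent with the infimum being $1$ rather than $0$ in that case.
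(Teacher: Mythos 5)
Your proposal is correct and follows essentially the same route as the paper's proof: both construct a non-maximal prime containing a low-dimensional minimal prime whose sum with every top-dimensional minimal prime is $\m$-primary, pull back integrally closed ideals from the quotient, and bound their multiplicities uniformly via the additivity formula. The only cosmetic differences are that the paper obtains its prime $Q$ as a minimal prime of a lifted system of parameters of $R/(P+\mf q)$ and works with $\overline{\m^n(R/Q)}$ in a possibly higher-dimensional quotient, whereas you cut down to coheight one by prime avoidance and invoke Lemma~\ref{lem: one dimensional complete domain}, which supplies more than the needed fact that the colengths tend to infinity.
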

\begin{proof}
We may assume $R=\widehat{R}$ is complete. Let $\mf p_1,\dots, \mf p_t$ be all the minimal primes of $R$ such that $\dim(R/\mf p_i)=d$. We first claim that there exists $Q\in\Spec(R)\setminus\{\m\}$ such that $Q(R/\mf p_i)$ is $\m(R/\mf p_i)$-primary for every $i$. To see this, note that since $R$ is not equidimensional, there exists a minimal prime $P$ of $R$ such that $\dim(R/P)<d$. We know that $\mf q:= \cap_i\mf p_i$ is not contained in $P$. Let $x_1,\dots, x_s$ be a system of parameters of $R/(P+\mf q)$ and let $Q$ be a minimal prime of $(x_1,\dots,x_s)(R/P)$. Then we know that $Q\in\Spec(R)\setminus\{\m\}$, $Q$ contains $P$, and that $Q(R/\mf p_i)$ is $\m(R/\mf p_i)$-primary (as $Q+\mf q$ is $\m$-primary).

Now we let $I_n$ be the pre-image of $\overline{\m^n(R/Q)}$ in $R$. Then we know that $I_n$ is $\m$-primary and integrally closed, and that
$$\length(R/I_n)=\length\left((R/Q)/{\overline{\m^n(R/Q)}}\right)\to\infty.$$
On the other hand, by the additivity formula for multiplicities, we know that
$$\eh(I_n)= \sum_i\eh(I_n, R/\mf p_i)\length(R_{\mf p_i})\leq \sum_i\eh(Q(R/\mf p_i), R/\mf p_i)\length(R_{\mf p_i}) =C$$
where $C$ is a constant independent of $n$. Putting these together we see that
$$\lim_{n\to\infty} \frac{\eh(I_n)}{\length(R/I_n)} =\lim_{n\to\infty} \frac{\eh (I_n) - \eh (\mf m)}{\length (R/I_n) - \length (R/\mf m)} =0.$$
This clearly implies the desired conclusion.
\end{proof}

\begin{remark}
One might also consider $$\inf_{\substack{I \subsetneq J, \sqrt{I} = \mf m}} \left \{\frac{\eh (I) - \eh (J)}{\length (R/I) - \length (R/J)} \right\}.$$
But it is clear that this infimum is $0$ as long as there exists an $\m$-primary ideal $I$ that is not integrally closed (for then setting $J=\overline{I}$, the numerator above is $0$). Therefore this invariant is $0$ unless $R$ is a DVR, in which case every $\m$-primary ideal is integrally closed with multiplicity equals to colength, in which case this invariant is $1$.
\end{remark}

\subsection{Suprema} In this subsection, we study the suprema of the ratio between multiplicity and colength, and of the relative drops of Hilbert--Samuel multiplicities. The answers are completely different when the dimension of the ring is one or larger than one. We first handle the one-dimensional case. We begin with a simple and well-known lemma.

\begin{lemma}
\label{lem: dim one}
Let $(R,\m)$ be a Noetherian local ring of dimension one. Then for any two $\m$-primary ideals $\mathfrak{a},\mathfrak{b}$ we have
$$\eh(\mathfrak{a}\mathfrak{b})=\eh(\mathfrak{a})+\eh(\mathfrak{b}).$$
\end{lemma}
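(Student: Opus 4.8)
The plan is to reduce, in three stages, the multiplicativity identity to a single length computation in a one-dimensional domain. First I would arrange an infinite residue field. By gonflement (Lemma~\ref{lem: gonflement}) there is a flat local extension $(R,\m)\to(R',\m')$ with $\m'=\m R'$ and $R'/\m'$ infinite; since the closed fiber is a field we have $\dim R'=1$, and flatness together with $\m R'=\m'$ gives
\[
\length_{R'}(R'/(IR')^n)=\length_R(R/I^n)
\]
for every $\m$-primary $I$ and all $n$, so that both $\length(R'/IR')=\length(R/I)$ and $\eh(IR',R')=\eh(I,R)$, while clearly $(\mathfrak a\mathfrak b)R'=(\mathfrak aR')(\mathfrak bR')$. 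Hence it suffices to prove the identity for $R'$, and we may assume $R/\m$ is infinite.

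Second, with $R/\m$ infinite, property $(6)$ of the preliminaries provides minimal reductions: $\mathfrak a$ has a principal reduction $(a)$ and $\mathfrak b$ a principal reduction $(b)$, with $a,b$ parameters. The key point is that $(ab)$ is then a reduction of $\mathfrak a\mathfrak b$: choosing $N$ with $\mathfrak a^{N+1}=a\mathfrak a^N$ and $\mathfrak b^{N+1}=b\mathfrak b^N$, one computes
\[
(\mathfrak a\mathfrak b)^{N+1}=\mathfrak a^{N+1}\mathfrak b^{N+1}=ab\,\mathfrak a^N\mathfrak b^N=ab\,(\mathfrak a\mathfrak b)^N .
\]
By property $(7)$ multiplicity is unchanged under passing to a reduction, so $\eh(\mathfrak a)=\eh((a))$, $\eh(\mathfrak b)=\eh((b))$ and $\eh(\mathfrak a\mathfrak b)=\eh((ab))$, and the problem is reduced to proving $\eh((ab))=\eh((a))+\eh((b))$ for parameters $a,b$.

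Third, I would invoke the additivity formula $\eh((x),R)=\sum_{\mf p\in\Min(R)}\length(R_\mf p)\,\eh((x),R/\mf p)$; the terms with $\dim(R/\mf p)<1$ vanish, so only the one-dimensional domains $S=R/\mf p$ contribute, and in each of these $x$ is nonzero (as $(x)$ is $\m$-primary) hence a nonzerodivisor. For such an $S$ multiplication by $x^i$ gives $x^iS/x^{i+1}S\cong S/xS$, whence $\length(S/x^nS)=n\,\length(S/xS)$ and therefore $\eh((x),S)=\length(S/xS)$. Finally the short exact sequence $0\to (a)/(ab)\to S/(ab)\to S/(a)\to 0$ together with the isomorphism $(a)/(ab)\cong S/(b)$ (multiplication by $a$, injective since $S$ is a domain) yields $\length(S/(ab))=\length(S/(a))+\length(S/(b))$, i.e.\ $\eh((ab),S)=\eh((a),S)+\eh((b),S)$; summing over $\mf p$ with the weights $\length(R_\mf p)$ recovers the identity for the parameters $a,b$ in $R$, and the chain of reductions above completes the argument.

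The individual steps are all standard, so I do not expect a deep obstacle; the two points requiring care are verifying that the product $(ab)$ of the chosen minimal reductions is genuinely a reduction of $\mathfrak a\mathfrak b$ (handled by the exponent bookkeeping above) and confirming that the gonflement base change preserves both $\eh$ and $\length$ as well as the dimension, so that passing to an infinite residue field loses no generality.
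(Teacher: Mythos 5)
Your argument is correct, and its first two reductions (passing to an infinite residue field by gonflement, then replacing $\mathfrak a$, $\mathfrak b$, and $\mathfrak a\mathfrak b$ by the principal reductions $(a)$, $(b)$, $(ab)$ — including the exponent check that $(ab)$ really is a reduction of $\mathfrak a\mathfrak b$, which the paper leaves implicit) coincide with the paper's. Where you diverge is the final identity $\eh((ab))=\eh((a))+\eh((b))$: the paper disposes of this in one line by identifying $\eh((x),R)$ with the Koszul--Euler characteristic $\chi(x,R)$ and invoking additivity of Euler characteristics in the parameter (citing an external lemma), whereas you use the additivity formula $\eh(I,R)=\sum_{\mf p\in\Minh(R)}\length(R_{\mf p})\,\eh(I,R/\mf p)$ to reduce to a one-dimensional local domain $S$, where $\eh((x),S)=\length(S/xS)$ and the identity $\length(S/abS)=\length(S/aS)+\length(S/bS)$ follows from the exact sequence $0\to aS/abS\to S/abS\to S/aS\to 0$ and the isomorphism $aS/abS\cong S/bS$. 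Your route is longer but entirely self-contained and elementary, needing only the standard associativity formula; the paper's route is shorter but outsources the real content to the cited Euler-characteristic lemma (which is itself usually proved by a computation very much like yours). Both are complete proofs.
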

\begin{proof}
We may assume that $R/\m$ is infinite. We can replace $\mathfrak{a},\mathfrak{b}$ by their minimal reductions $(a), (b)$, respectively. Since $\eh(a, R)= \chi(a, R)$, the result follows from additivity of Euler characteristic (for example see \cite[Lemma 2.1]{MaLimUlrich}).
\end{proof}

\begin{proposition}
\label{prop: sup in dim one}
Let $(R,\m)$ be a Noetherian local ring of dimension one. Then we have
$$\sup_{\substack{I\subsetneq J,  \sqrt{I}=\m}} \left\{\frac{\eh (I) - \eh (J)}{\length (R/I) - \length (R/J)} \right\} = \eh(R).$$
\end{proposition}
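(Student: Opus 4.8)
The plan is to prove the two inequalities separately. Throughout I record that for $\m$-primary ideals $I\subsetneq J$ the denominator is $\length(R/I)-\length(R/J)=\length(J/I)$, a positive integer, and the numerator $\eh(I)-\eh(J)\geq 0$ because multiplicity is antitone under inclusion of $\m$-primary ideals (from $I\subseteq J$ one gets $I^n\subseteq J^n$, hence $\eh(I)\geq \eh(J)$). Thus for the bound $\leq \eh(R)$ it suffices to show $\eh(I)-\eh(J)\leq \eh(R)\cdot\length(J/I)$ for every such pair, while for the reverse bound it suffices to exhibit a single pair whose ratio equals $\eh(R)$. Note that the only nontrivial input I intend to use is the additivity Lemma~\ref{lem: dim one}; everything else is formal.

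For the upper bound, set $\ell=\length(J/I)$. Since $J/I$ is an $R$-module of length $\ell$ over the local ring $R$, its Loewy length is at most $\ell$ (each strict step in $J/I\supseteq \m(J/I)\supseteq\m^2(J/I)\supseteq\cdots$ drops length by at least one, and equality forces $0$ by Nakayama), so $\m^{\ell}(J/I)=0$, i.e.\ $\m^{\ell}J\subseteq I$. By antitonicity $\eh(I)\leq \eh(\m^{\ell}J)$, and by Lemma~\ref{lem: dim one} applied repeatedly $\eh(\m^{\ell}J)=\ell\,\eh(\m)+\eh(J)=\ell\,\eh(R)+\eh(J)$. Combining these gives
\[
\eh(I)-\eh(J)\leq \ell\,\eh(R)=\eh(R)\bigl(\length(R/I)-\length(R/J)\bigr),
\]
which is exactly the desired estimate. (If one wishes to admit $J=R$ in the supremum, then $\eh(J)=0$ and the inequality reads $\eh(I)\leq \eh(R)\length(R/I)$, which is Lech's inequality, Theorem~A, in dimension one, so this case is harmless.)

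For the reverse inequality I will produce a pair attaining the ratio exactly. Since $\dim R=1$, the maximal ideal $\m$ is not contained in any minimal prime, so by prime avoidance I may choose $a\in\m$ avoiding every minimal prime; then $(a)$ is $\m$-primary. Put $J=(a)$ and $I=\m a=\m(a)$. From $\m^{k}\subseteq(a)$ (valid for some $k$ since $(a)$ is $\m$-primary) one gets $\m^{k+1}\subseteq\m(a)=I$, so $I$ is $\m$-primary; moreover $I\subsetneq J$ by Nakayama and $\length(J/I)=\length\bigl((a)/\m(a)\bigr)=1$ because $(a)/\m(a)\cong R/\m$. By Lemma~\ref{lem: dim one}, $\eh(I)=\eh(\m(a))=\eh(\m)+\eh((a))$, so $\eh(I)-\eh(J)=\eh(\m)=\eh(R)$ and the ratio for this pair is exactly $\eh(R)$. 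Hence the supremum is $\geq\eh(R)$, and together with the upper bound it equals $\eh(R)$ (and is in fact attained).

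I do not expect a real obstacle here: the whole argument rests on the additivity Lemma~\ref{lem: dim one}, and granting it, the upper bound is the one-line consequence of the containment $\m^{\length(J/I)}J\subseteq I$, while the lower bound is witnessed by the single explicit pair $(\m a,(a))$. The only points requiring care are the routine verifications that $I=\m a$ is genuinely $\m$-primary and that the degenerate case $J=R$, if allowed, is subsumed by Lech's inequality.
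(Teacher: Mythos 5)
Your proof is correct and follows essentially the same route as the paper: both bounds rest on the additivity Lemma~\ref{lem: dim one}, and the lower bound uses the identical witness pair $I=\mathfrak{m}(a)\subsetneq J=(a)$ for a parameter $a$. The only (harmless) difference is in the upper bound, where the paper reduces to composition-series steps with $\ell(J/I)=1$ and uses $\mathfrak{m}J\subseteq I$, while you apply the containment $\mathfrak{m}^{\ell}J\subseteq I$ in one shot and invoke additivity $\ell$ times.
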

\begin{proof}
To see the supremum is $\leq \eh(R)$, it is enough to show that when $\ell(J/I)=1$ we have $\eh(I)-\eh(J)\leq \eh(R)$. But if $\length(J/I)=1$, then $\m J \subseteq I$ and thus $\eh(I)\leq \eh(\m J) = \eh(R)+ \eh(J)$ by Lemma \ref{lem: dim one}. To see the supremum is equal to $\eh(R)$, let $x\in \m$ be a parameter element and consider $I=\m(x)\subseteq J=(x)$. By Lemma \ref{lem: dim one}, we know that $\eh(I)=\eh(R)+ \eh(J)$ and $\length(J/I)=1$. Thus $\eh (I) - \eh (J)= \eh(R)\cdot (\length(R/I)-\length(R/J))$ in this case.
\end{proof}

In dimension one, the supremum in Proposition~\ref{prop: sup in dim one} might be less than $\eh(R)$ if we restrict ourselves to integrally closed ideals.

\begin{example}
\label{example: sup of integrally closed dim one}
Consider $R = k[[x,y]]/(xy)$. We claim that
\[
\sup_{\substack{I\subsetneq J, \sqrt{I}=\m \\ I=\overline{I}, J=\overline{J}}} \left\{\frac{\eh (I) - \eh (J)}{\length (R/I) - \length (R/J)} \right\} = 1 <2= \eh(R).
\]

First of all, any integrally closed $\m$-primary ideal $I$ must have two generators. To see this, note that since $I$ is $\m$-full (\cite[Theorem~3]{WatanabeJ})
we know that $\numg (I) \geq \numg(\m) = 2$, but, on the other hand,
since $x +y$ is a minimal reduction of $\m$ so
$$\numg(I) \leq \length (I/(x+y)I) = \eh ((x+y), I) = \eh ((x + y), R) = 2.$$

Next, we prove that every integrally closed ideal of $R$ has
the form $(x^a, y^b)$ for some positive integers $a, b$.
Namely, an $\m$-primary ideal must contain some power $(x + y)^n = x^n + y^n$,
hence it contains $x^{n + 1} = x (x + y)^n$ and $y^{n + 1} = y (x + y)^n$.
Thus, there are minimal positive integers $a, b$ such that $x^a, y^b \in I$.
By contradiction, suppose that $I \neq (x^a, y^b)$. Then $I$ contains an element of the form
\[
f = \sum_{i = 1}^{a - 1} \alpha_i x^i + \sum_{j = 1}^{b - 1} \beta_j y^j
\]
with $\alpha_i, \beta_j \in k$.
Suppose that some $\alpha_i \neq 0$. Multiplying by $x$ we obtain that
$0 \neq xf = \sum_{i = 0}^{a - 1} \alpha_i x^{i + 1} \in I$.
Let $n = \min \{i \mid \alpha_i \neq 0\}$, then $xf = x^{n + 1} \cdot \text{unit}$,
so $n + 1 = a$ by the minimality of $a$.
The minimality also implies that some $\beta_j \neq 0$, so an identical argument for $y$  shows that $x^{a - 1} + \text{unit} \cdot y^{b-1} \in I$.
However, both $x^{a - 1}$ and $y^{b-1}$ are integral over this element, since we can check this modulo minimal primes. Thus we get a contradiction with the definition of $a$ and $b$.

Finally, it is easy to see that $\eh ((x^a, y^b), R) = a + b$ and $\length (R/(x^a, y^b)) = a + b - 1$.
Thus each term in the supremum above is $1$, so the supremum is $1$ as wanted.
\end{example}

\begin{example}\label{example: sup of integrally closed numerical semigroups}
Using numerical semigroup rings we may provide examples where the invariant
takes different values between $1$ and the multiplicity.

Since the integral closure of a numerical semigroup ring $R = k[S]$ is the polynomial ring $k[T]$, we can classify the integrally closed ideals as
\[
I_n = \left \{(T^n) \cap R \mid 1 \neq n \in S \right \}.
\]
The multiplicity of $I_n$ is $n$ while $\length (R/I_n) = \# \{s \in S \mid s < n\}$.
Therefore, once $n$ passes the conductor, we have the formula $\length (R/I_n) =  n - \# (\mathbb{N} \setminus S)$.
Thus the relative Hilbert--Samuel multiplicity of two integrally closed ideals contained in the conductor is always $1$. This observation and some simple computations show that
\begin{enumerate}
  \item For the semigroup ring $k[[T^2, T^3]]\cong k[[x,y]]/(x^2 - y^3)$, the supremum is $1$.
  \item For the semigroup ring $k[[T^2, T^5]]$, the supremum is $2=\eh(R)$ and it is achieved by $I_2$ and $I_4$.
  \item For the semigroup ring $k[[T^3, T^5]]$, a direct check of all possible combinations shows that the supremum is $2$, which is strictly less than the multiplicity of $k[[T^3, T^5]]$.
\end{enumerate}
\end{example}

In contrast to Proposition~\ref{prop: sup in dim one}, Example~\ref{example: sup of integrally closed dim one}, and Example~\ref{example: sup of integrally closed numerical semigroups}, we show that the supremum is often $\infty$ when $R$ has dimension at least two.

\begin{proposition}
\label{prop: sup in dim at least two}
Let $(R,\m)$ be a Noetherian local ring of dimension $d\geq 2$. Then we have
$$\sup_{\substack{I\subsetneq J,  \sqrt{I}=\m}} \left\{\frac{\eh (I) - \eh (J)}{\length (R/I) - \length (R/J)} \right\} = \infty.$$
If, in addition, $R/\m$ is algebraically closed, then
$$\sup_{\substack{I\subsetneq J, \sqrt{I}=\m \\ I=\overline{I}, J=\overline{J}}} \left\{\frac{\eh (I) - \eh (J)}{\length (R/I) - \length (R/J)} \right\} = \infty.$$
\end{proposition}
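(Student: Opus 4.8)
The plan is to exhibit, for each $n$, an explicit pair $I_n \subsetneq J_n$ whose colengths differ by exactly $1$ while their multiplicities differ by a quantity growing linearly in $n$; this settles the first (unrestricted) supremum at once, and I would then deduce the integrally closed statement by passing to integral closures. Fix a system of parameters $x_1, \dots, x_d$ of $R$ (using $d \geq 2$) and set $e_0 = \eh((x_1, \dots, x_d))$. Consider
\[
J_n = (x_1^n, x_2^{n+1}, x_3, \dots, x_d), \qquad I_n = (x_1^{n+1},\, x_1^n x_2,\, x_2^{n+1}, x_3, \dots, x_d).
\]
Both are $\m$-primary with $I_n \subseteq J_n$. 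I would first check that $\length(J_n/I_n) = 1$: one has $J_n = I_n + (x_1^n)$ and $\m x_1^n \subseteq I_n$ (since $x_1\cdot x_1^n = x_1^{n+1}$, $x_2 \cdot x_1^n = x_1^n x_2$, and $x_j\cdot x_1^n \in (x_j)$ for $j \geq 3$ all lie in $I_n$), so $J_n/I_n$ is cyclic; moreover $x_1^n \notin I_n$ because $I_n \subseteq (x_1^{n+1}) + (x_2, \dots, x_d)$ and the image of $x_1^n$ in $R/\big((x_1^{n+1}) + (x_2, \dots, x_d)\big)$ is nonzero, as $x_1$ is a parameter of the one-dimensional ring $R/(x_2, \dots, x_d)$ and hence $x_1^n \notin (x_1^{n+1})$ there.

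For the multiplicities I use the standard formula $\eh((x_1^{a_1}, \dots, x_d^{a_d})) = a_1 \cdots a_d\, e_0$ for parameter ideals generated by powers, giving $\eh(J_n) = n(n+1)e_0$. For $I_n$, set $K_n = (x_1^{n+1}, x_2^{n+1}, x_3, \dots, x_d)$, so that $\eh(K_n) = (n+1)^2 e_0$. The key point is that $x_1^n x_2 \in \overline{K_n}$: indeed $(x_1^n x_2)^{n+1} = (x_1^{n+1})^n x_2^{n+1} \in K_n^{n+1}$ is a monic integral dependence equation, valid in any ring. Hence $K_n \subseteq I_n \subseteq \overline{K_n}$, so $\overline{I_n} = \overline{K_n}$ and $\eh(I_n) = (n+1)^2 e_0$. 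Therefore
\[
\frac{\eh(I_n) - \eh(J_n)}{\length(R/I_n) - \length(R/J_n)} = \frac{(n+1)^2 e_0 - n(n+1)e_0}{1} = (n+1)e_0,
\]
which tends to $\infty$ as $n \to \infty$, proving the first statement (note this argument is completely general: it needs only a system of parameters).

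For the integrally closed statement I would pass to $\overline{I_n} \subseteq \overline{J_n}$; since integral closure does not change multiplicity, the numerator remains $(n+1)e_0$. After completing and, if needed, reducing to algebraically closed residue field via Lemma~\ref{lem: gonflement} and Lemma~\ref{lem: equidimensional}, the remaining task is to control $\length(\overline{J_n}/\overline{I_n})$. The crucial structural observation is that $x_i\overline{J_n} \subseteq \overline{I_n}$ for every $i$ (because $x_i J_n \subseteq I_n$), so $\overline{J_n}/\overline{I_n}$ is annihilated by $(x_1, \dots, x_d)$ and is thus a finite-length module over the Artinian ring $R/(x_1, \dots, x_d)$; in the graded (e.g. regular) model this length equals $1$. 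Granting that this colength grows more slowly than $n$, the pair $\overline{I_n} \subsetneq \overline{J_n}$ of integrally closed ideals already forces the second supremum to be infinite; alternatively, Proposition~\ref{prop: improved Watanabe} (where algebraic closedness of $R/\m$ enters) interpolates a chain of integrally closed ideals with length-one quotients between $\overline{I_n}$ and $\overline{J_n}$, and some single step must absorb a multiplicity drop of order $(n+1)e_0/\length(\overline{J_n}/\overline{I_n})$.

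The main obstacle is precisely this last estimate: bounding $\length(\overline{J_n}/\overline{I_n})$ so that it does not keep pace with the numerator $(n+1)e_0$. The entire first part is an explicit and fully general computation, but controlling the integrally closed colength gap in an arbitrary ring is delicate, and this is exactly the point where the algebraically closed residue field should be used, through Proposition~\ref{prop: improved Watanabe} together with the module structure of $\overline{J_n}/\overline{I_n}$ over $R/(x_1, \dots, x_d)$.
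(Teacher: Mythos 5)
Your construction for the first supremum is a genuinely different and more elementary route than the paper's (which reduces to a complete local domain and passes through a module-finite extension of a regular local ring via Cohen's structure theorem), and it can be made to work, but as written it contains an error: to conclude $\length(J_n/I_n)=1$ you verify $x_i\cdot x_1^n\in I_n$ only for the parameters $x_1,\dots,x_d$, which do not generate $\m$ in general, so $\m x_1^n\subseteq I_n$ is not established. The fix is easy and you do not need length exactly $1$: since $(x_1,\dots,x_d)x_1^n\subseteq I_n$ and $J_n=I_n+(x_1^n)$, the cyclic module $J_n/I_n$ is killed by $(x_1,\dots,x_d)$, hence $1\leq\length(J_n/I_n)\leq\length\bigl(R/(x_1,\dots,x_d)\bigr)=:C$ with $C$ independent of $n$, while the numerator is $(n+1)e_0$; the ratio is then at least $(n+1)e_0/C\to\infty$. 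The multiplicity computations ($\eh(J_n)=n(n+1)e_0$ via the power formula for parameter ideals, and $\eh(I_n)=(n+1)^2e_0$ via $K_n\subseteq I_n\subseteq\overline{K_n}$) are correct, as is the verification that $x_1^n\notin I_n$. With the length claim repaired this is a complete and arguably cleaner proof of the first statement.

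The second statement is where there is a genuine gap, and you correctly identify it yourself: the whole argument hinges on bounding $\length(\overline{J_n}/\overline{I_n})$ by something growing more slowly than $n$, and the structural observation you offer does not deliver this. Knowing that $\overline{J_n}/\overline{I_n}$ is killed by $(x_1,\dots,x_d)$ makes it a finite-length module over the Artinian ring $R/(x_1,\dots,x_d)$, but that bounds its length only by $\nu(\overline{J_n}/\overline{I_n})\cdot\length(R/(x_1,\dots,x_d))$, and the number of generators of $\overline{J_n}$ (hence of the quotient) can grow polynomially in $n$ — for $d\geq 2$ typically like $n^{d-1}$, which already swamps the numerator $(n+1)e_0$. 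So "granting that this colength grows more slowly than $n$" is precisely the unproved step, and there is no apparent way to prove it along these lines. The paper avoids this entirely by running the construction in the opposite direction: after reducing to a complete local domain, it takes $I=\overline{\m^n}$ and uses Proposition~\ref{prop: improved Watanabe} (this is where the algebraically closed residue field enters) to produce an integrally closed $J\supseteq I$ with $\length(J/I)=1$ by construction; it then writes $J=I+(x)$ and shows $\eh(I)-\eh(J)=n^{d-1}\bigl(n\eh(R)-\eh(R/(x'))\bigr)\geq n^{d-1}$ using a general element $x'$ of $\m^n+(x)$. That is, rather than taking integral closures of a good pair and trying to control the colength gap, one starts from a pair whose colength gap is $1$ by design and proves the multiplicity drop is large. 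Your proposal is missing this idea, so the second half does not go through.
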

\begin{proof}
We may assume that $R=\widehat{R}$ is complete. Let $P$ be a minimal prime of $R$ such that $\dim(R/P)=d$. We note that
$$\sup_{\substack{I\subsetneq J,  \sqrt{I}=\m}} \left\{\frac{\eh (I) - \eh (J)}{\length (R/I) - \length (R/J)} \right\} \geq \sup_{\substack{P\subseteq I\subsetneq J,  \sqrt{I}=\m}} \left\{\frac{\eh (I) - \eh (J)}{\length (R/I) - \length (R/J)} \right\}$$
simply because the second supremum is taken over a smaller set. Next, for any $P\subseteq I\subsetneq J$ we have
$$\length (R/I) - \length (R/J) = \length\big((R/P)/I(R/P)\big) - \length\big((R/P)/J(R/P)\big)$$
while by the additivity formula for multiplicities,
\[
  \eh (I) - \eh (J)  = \sum_{\mf p\in\Minh(R)}\length(R_{\mf p})\big(\eh (I, R/\mf p) - \eh (J, R/\mf p)\big)   \geq  \eh(I, R/P) - \eh(J, R/P).
\]
It follows that
$$\sup_{\substack{P\subseteq I\subsetneq J,  \sqrt{I}=\m}} \left\{\frac{\eh (I) - \eh (J)}{\length (R/I) - \length (R/J)} \right\} \geq \sup_{\substack{P\subseteq I\subsetneq J,  \sqrt{I}=\m}} \left\{\frac{\eh (I, R/P) - \eh (J, R/P)}{ \length\big((R/P)/I(R/P)\big) - \length\big((R/P)/J(R/P)\big)} \right\}.$$

Therefore to prove the supremum in question is $\infty$, we may replace $R$ by $R/P$ to assume that $(R,\m)$ is a Noetherian complete local domain. By Cohen's structure theorem, we can find a complete regular local ring $(A,\m_A)$ with $A/\m_A\cong R/\m$ such that $A\to R$ is a module-finite extension. Take an element $x$ in a minimal generating set of $\m_A$ and let $\mf a=\m_A^n$ and $\mf b=\m_A^n+(x^{n-1})$. One checks directly that $\eh(\mf a)=n^d$ and $\eh(\mf b)=n^d-n^{d-1}$. Let $I=\mf a R$ and $J=\mf b R$ be their expansions to $R$. Then we have $\eh(I)-\eh(J)=n^{d-1}\rank_A(R)$. Tensoring the short exact sequence $$0\to A/\m_A \to A/\mf a \to A/\mf b \to 0$$ with $R$ we find that $\length(R/I)-\length(R/J) \leq \length(R/\m_A R)$. It follows that
$$\frac{\eh (I) - \eh (J)}{\length (R/I) - \length (R/J)} \geq \frac{n^{d-1}\rank_A(R)}{\length(R/\m_A R)}.$$
Letting $n\to\infty$ shows that the supremum is $\infty$, which proves of the first statement.

For the second statement, by exactly the same argument as above, we may assume that $(R,\m)$ is a Noetherian complete local domain (using the fact that the pre-image of integrally closed ideals in $R/P$ are integrally closed in $R$). Now by Proposition~\ref{prop: improved Watanabe} we can find $I:=\overline{\m^n} \subseteq J$ such that $\length(R/I)-\length(R/J)=1$ and $J$ is integrally closed. Then $J=I+(x)$ for some $x\in (I:\m)-I$. For a general element $x'=ux+v \in \m^n+(x)$, where $u\in R^\times$ and $v\in \m^n$, we have $\m^n+(x)= \m^n+(x')$ and thus $$\eh(\m^n+(x), R)= \eh(\m^n+(x), R/(x'))=\eh(\m^n+(x'), R/(x'))=\eh(\m^n, R/(x')).$$
It follows that
$$\eh(I)-\eh(J)= \eh(\m^n) - \eh(\m^n+(x))= \eh(\m^n) - \eh(\m^n, R/(x'))=n^{d-1}(n\eh(R)- \eh(R/(x'))).$$
Since $R$ is a complete local domain, $I$ is integrally closed and $I\subsetneq J$, we know that $\eh(I)> \eh(J)$ and thus $n\eh(R)-\eh(R/(x'))>0$. But then $n\eh(R)-\eh(R/(x'))\geq 1$ since it is an integer and thus $\eh(I)-\eh(J)= n^{d-1}(n\eh(R)- \eh(R/(x'))) \geq n^{d-1}$. Letting $n\to\infty$ completes the proof of the second statement. 
\end{proof}

\begin{remark}
We suspect that $$\sup_{\substack{I\subsetneq J, \sqrt{I}=\m \\ I=\overline{I}, J=\overline{J}}} \left\{\frac{\eh (I) - \eh (J)}{\length (R/I) - \length (R/J)} \right\} = \infty$$ should always be true when $\dim(R)\geq 2$ without any assumption on the residue field.
\end{remark}

\section{Hilbert--Kunz multiplicity and colength}

\subsection{Infima} In this subsection, we study the infima of the ratio between Hilbert--Kunz multiplicity and colength, and of the relative drops of Hilbert--Kunz multiplicities. We first show that for integrally closed ideals, these infima are often $1$.
\begin{proposition}
\label{prop: inf integrally closed Hilbert-Kunz}
Let $(R, \mf m)$ be a Noetherian local ring of prime characteristic $p > 0$ and dimension $d\geq 1$. Consider the following two infima:
$$\inf_{\substack{\sqrt{I} = \mf m , I = \overline{I}}} \left \{ \frac{\ehk(I)}{\length(R/I)}\right\} \quad \text{ and } \quad
\inf_{\substack{I \subsetneq J, \sqrt{I} = \mf m \\ I = \overline{I}, J=\overline{J}}} \left \{\frac{\ehk (I) - \ehk (J)}{\length (R/I) - \length (R/J)}  \right\}.$$
Then we have
\begin{enumerate}
  \item $\widehat{R}$ is not equidimensional then both infima are equal to $0$.
  \item If $\widehat{R}$ is equidimensional and $R/\m$ is perfect, then both infima are $\geq 1$.
  \item If there exists a minimal prime $P$ of $\widehat{R}$ such that $\dim(\widehat{R}/P)=d$ and $\widehat{R}_P$ is a field, then both infima are $\leq 1$.
\end{enumerate}
\end{proposition}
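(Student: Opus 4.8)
The plan is to mirror the Hilbert--Samuel arguments of Proposition~\ref{prop: non equidimensional}, Theorem~\ref{thm: inf of integrally closed}, and Theorem~\ref{thm: inf of relative drop of integrally closed}, transporting them to the Hilbert--Kunz setting through three structural facts already at hand: the inequality $\ehk(I)\leq \eh(I)$ recorded in the preliminaries, the Hilbert--Kunz additivity formula $\ehk(I,R)=\sum_{\mf p\in\Minh(R)}\length(R_\mf p)\ehk(I,R/\mf p)$ used in the proof of Theorem~\ref{thm: Watanabe--Yoshida}, and the monotonicity $\ehk(I)\geq \ehk(J)$ for $I\subseteq J$ (immediate from $I^{[q]}\subseteq J^{[q]}$). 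Where completion is needed I would reduce to $R=\widehat R$, which is harmless since $\ehk$ and colengths are preserved, integrally closed $\m$-primary ideals correspond between $R$ and $\widehat R$, and perfectness of $R/\m$ is unaffected.

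For part $(2)$, both bounds follow at once from Section~2. For the first infimum, Corollary~\ref{cor: HK integral closed and singular} gives $\ehk(I)\geq \length(R/I)$ for every integrally closed $\m$-primary $I$, so each ratio is $\geq 1$. For the second infimum, given integrally closed $I\subsetneq J$, I would apply Corollary~\ref{cor: relative HK inequality} with the two ideals in the opposite roles to obtain $\ehk(I)-\ehk(J)\geq \length(J/I)=\length(R/I)-\length(R/J)$, so again each ratio is $\geq 1$. Here one only needs to note that the hypotheses of these corollaries (equidimensionality of $\widehat R$ and perfectness of $R/\m$) are exactly those assumed in $(2)$.

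For part $(3)$, the key point is that the approximating family from Lemma~\ref{lem: main approx} can be reused verbatim: it produces a constant $C>0$ and integrally closed $\m$-primary ideals $\{J_n\}$ with $\length(R/J_n)\to\infty$ and $\eh(J_n)\leq (1+C/n)\length(R/J_n)$. Since $\ehk(J_n)\leq \eh(J_n)$, the same upper bound holds for $\ehk(J_n)$, so $\ehk(J_n)/\length(R/J_n)\leq 1+C/n\to 1$ shows the first infimum is $\leq 1$. For the second infimum I would test the pairs $J_n\subsetneq \m$ (valid once $\length(R/J_n)>1$) and use $\ehk(\m)\geq 0$ together with $\length(R/J_n)\to\infty$ to get
\[
\frac{\ehk(J_n)-\ehk(\m)}{\length(R/J_n)-1}\leq \frac{(1+C/n)\length(R/J_n)}{\length(R/J_n)-1}\longrightarrow 1.
\]
I regard this as the cleanest part, and the point worth emphasizing is that no new construction is required for Hilbert--Kunz multiplicity: the Hilbert--Samuel approximation transfers directly through $\ehk\leq \eh$.

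For part $(1)$, I would reuse the construction from the proof of Proposition~\ref{prop: non equidimensional}: after passing to $R=\widehat R$, there is a prime $Q\neq \m$ and integrally closed $\m$-primary ideals $I_n\supseteq Q$ with $\length(R/I_n)\to\infty$, where $Q(R/\mf p_i)$ is $\m$-primary for each $\mf p_i\in\Minh(R)=\{\mf p_1,\dots,\mf p_t\}$. Applying the Hilbert--Kunz additivity formula and then monotonicity (since $Q(R/\mf p_i)\subseteq I_n(R/\mf p_i)$) yields $\ehk(I_n)\leq \sum_i \length(R_{\mf p_i})\ehk(Q(R/\mf p_i),R/\mf p_i)=:C'$, a bound independent of $n$. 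Hence $\ehk(I_n)/\length(R/I_n)\to 0$ and likewise $(\ehk(I_n)-\ehk(\m))/(\length(R/I_n)-1)\to 0$; since every term of either infimum is non-negative (for the second because $I\subseteq J$ forces $\ehk(I)\geq \ehk(J)$), both infima equal $0$. I do not anticipate a genuine obstacle, as each part reduces to established machinery; the only item demanding care is the precise invocation of the Hilbert--Kunz additivity formula in the possibly non-equidimensional, non-reduced setting of $(1)$, where the sum must be taken over the minimal primes of maximal dimension.
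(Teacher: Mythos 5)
Your proposal is correct and follows essentially the same route as the paper: part (2) from Corollary~\ref{cor: relative HK inequality} (with Corollary~\ref{cor: HK integral closed and singular} for the first infimum), part (3) from Lemma~\ref{lem: main approx} together with $\ehk\leq\eh$ and the test pairs $J_n\subsetneq\m$, and part (1) from the ideals constructed in Proposition~\ref{prop: non equidimensional}. The only cosmetic difference is in (1), where the paper simply bounds $\ehk(I_n)\leq\eh(I_n)\leq C$ using the estimate already established there, while you re-derive the same uniform bound via the Hilbert--Kunz additivity formula and monotonicity; both are fine.
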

\begin{proof}
First of all, $(2)$ follows directly from Corollary~\ref{cor: relative HK inequality}. For $(1)$, the proof is nearly identical to the proof of Proposition~\ref{prop: non equidimensional}: there we have constructed a sequence of $\m$-primary integrally closed ideals $\{I_n\}_n$ such that $\length(R/I_n)\to\infty$ but $\ehk(I_n)\leq \eh(I_n)\leq C$. It follows immediately from this that both infimum are $0$.  As for $(3)$, by Lemma \ref{lem: main approx}, there exists a constant $C>0$ and a sequence of
integrally closed $\m$-primary ideals $\{J_n\}_n$ such that $$\ehk(J_n) \leq \eh (J_n)\leq (1 + C/n) \length (R/J_n)$$
and $\length (R/J_n)\to\infty$. It follows immediately that both infima are $\leq 1$ (see the proof of Theorem~\ref{thm: inf of relative drop of integrally closed}).
\end{proof}

\begin{remark}
If $\widehat{R}$ is unmixed and $R/\m$ is perfect, the first infimum above achieves $1$ if and only if $R$ is regular due to Corollary~\ref{cor: HK integral closed and singular}. On the other hand, \cite[Theorem~1.8, Proposition~1.9]{WatanabeYoshidaMcKay} show that the second infimum above often achieves $1$ in a two-dimensional rational singularity.
\end{remark}

In contrast, the infima for all ideals or all tightly closed ideals seem much more subtle.

\begin{remark}
\label{rmk: inf of eHK}
We believe that both
\[
\rho(R):= \inf_{\sqrt{I} = \mf m } \left \{ \frac{\ehk(I)}{\length (R/I)} \right \} \,\  \text{ and } \,\
\rho^*(R) :=  \inf_{\sqrt{I} = \mf m , I = I^*} \left \{ \frac{\ehk(I)}{\length (R/I)} \right \}
\]
are interesting new invariants of $R$. We make some first observations on them.
\begin{enumerate}
  \item It follows from Proposition~\ref{prop: inf integrally closed Hilbert-Kunz} that $\rho(R)=\rho^*(R)=0$ when $\widehat{R}$ is not equidimensional.
  \item We clearly have $\rho(R)\leq \rho^*(R)$, and in general they could be different even for hypersurfaces, see Example~\ref{example: inf different from inf*}.
  \item Since $\eh(I)/d!\leq \ehk(I)\leq \eh(I)$, we have $1/d!n(R) \leq \rho(R) \leq 1/n(R)$.
  \item We have $\rho(R)=1$ when $R$ is a complete intersection \cite[Theorem 2.1]{MillerFrobeniusCI}, in particular, $\rho(R)$ could be different from $1/d!n(R)$ (since it is well-known and easy to check that $n(R)=1$ if and only if $R$ is Cohen--Macaulay \cite{StuckradVogel,KMQSY}).
  \item Both $\rho(R)$ and $\rho^*(R)$ could be $<1$ when $R$ is Cohen--Macaulay (in fact, even when $R$ is strongly F-regular), see Example~\ref{example: SFR not Roberts}. In particular, $\rho(R)$ could be different from $1/n(R)$.
\end{enumerate}
\end{remark}

\begin{example}
\label{example: inf different from inf*}
Let $R = k[[x,y]]/(x^2)$. Since $\dim(R)=1$, $\ehk(I) = \eh (I)$ and tightly closed ideals are exactly integrally closed ideals by Lemma~\ref{lem: tc and ic in dimension one}, thus
$\rho^* (R) > 1$ by Theorem~\ref{thm: inf of integrally closed}.
On the other hand, since $R$ is a hypersurface, we know that $\rho(R)=1$ by Remark~\ref{rmk: inf of eHK} (4).
More generally, this argument shows that if $S$ is a one-dimensional complete intersection then $\rho (S) = \rho^* (S)$ if and only if
the regular locus of $\widehat{S}$ is not empty.
\end{example}

\begin{example}
\label{example: SFR not Roberts}
Let $X=(x_{ij})$ be an $m\times n$ matrix of indeterminants where $m\geq n\geq 2$ and $m>2$. Let $S=k[[X]]$ be a power series ring in these $m\times n$ variables and let $I_2(X)\subseteq S$ be the ideal generated by $2\times 2$ minors of $X$. Set $R=S/I_2(X)$, then it is well-known that $R$ is strongly F-regular (see \cite[Example 7.14]{HochsterHunekeJAG}) and thus $\rho(R)=\rho^*(R)$ since every ideal is tightly closed. By \cite[Example 7.11]{KuranoNumericalEquivalence}, $R$ is not numerically Roberts, i.e., there exists an $\m$-primary ideal $I$ of finite projective dimension such that $\ehk(I)\neq \length(R/I)$ by \cite[Theorem 6.4]{KuranoNumericalEquivalence}. We will show below that this formally implies the slightly stronger fact that there exists an $\m$-primary ideal $I$ of finite projective dimension such that $\ehk(I)<\length(R/I)$, which implies that $\rho(R)=\rho^*(R)<1$ as wanted.

To see this stronger fact, we suppose on the contrary that $\ehk(I)\geq \length(R/I)$ for every $\m$-primary ideal $I$ of finite projective dimension. Now for every module $M$ of finite length and finite projective dimension, by \cite[the paragraph after Lemma 4.1]{SmokePerfectModules}, we have
$$0\to M \to R/J \to N\to 0$$
where $\pd_RR/J<\infty$ and $N$ has a filtration with quotients of the form $R/(x_1,\dots,x_d)$ where $x_1,\dots,x_d$ is some system of parameters of $R$. Base change along the Frobenius, computing length, taking limit, and using that $\Tor_1(F^e_*R, N)=0$ by \cite[Th\'{e}or\`{e}me (I.7)]{PeskineSzpiro} we obtain
$$\lim_{e\to\infty}\frac{\length(F^e(M))}{p^{ed}} + \lim_{e\to\infty}\frac{\length(F^e(N))}{p^{ed}} = \ehk(J),$$
where the two limits above exist due to \cite{SeibertComplexesFrobenius}. We next note that $\lim_{e\to\infty}\frac{\length(F^e(N))}{p^{ed}}=\length(N)$ since $N$ has a filtration by $R/(x_1,\dots,x_d)$. Comparing the above equality with $\length(M)+\length(N)=\length(R/J)$ and noting that $\ehk(J)\geq \length(R/J)$ by our assumption, we find that
\begin{equation}
\label{equation: length module finite projective dimension}
\lim_{e\to\infty}\frac{\length(F^e(M))}{p^{ed}} \geq \length(M) \text{ for every $M$ of finite length and finite projective dimension.}
\end{equation}
On the other hand, we also have a short exact sequence
$$0\to M'\to (R/(y_1,\dots,y_d))^{\oplus n}\to M\to 0$$
where $n=\nu(M)$ and $y_1,\dots, y_d$ is a system of parameter of $R$ that is contained in $\Ann(M)$. Since $\pd_RM<\infty$, we know that $\pd_RM'<\infty$. Thus after base change along the Frobenius, computing length, and taking limit we obtain that
$$\lim_{e\to\infty}\frac{\length(F^e(M))}{p^{ed}}+ \lim_{e\to\infty}\frac{\length(F^e(M'))}{p^{ed}} = n\ehk((y_1,\dots, y_d))=n\length(R/(y_1,\dots, y_d))=\length(M)+\length(M').$$
This combined with (\ref{equation: length module finite projective dimension}) shows that we must have equality in (\ref{equation: length module finite projective dimension}). In particular, we have $\ehk(I)=\length(R/I)$ for every $\m$-primary ideal $I$ of finite projective dimension, which is a contradiction (a similar argument shows that there also exists an $\m$-primary ideal $I$ of finite projective dimension such that $\ehk(I)>\length(R/I)$).
\end{example}

It is well-known that for a Noetherian local ring $(R,\m)$ of prime characteristic $p>0$. The infimum of the relative drop of Hilbert--Kunz multiplicity is equal to the F-signature \cite{WatanabeYoshidaMin,PolstraTucker}.
\[
\inf_{\substack{I\subsetneq J, \sqrt{I}=\m }} \left \{
\frac{\ehk (I) - \ehk (J)}{\length (R/I) - \length (R/J)}
\right \} =\fsig(R).
\]
Here the F-signature $s(R)$ is an invariant that measures the singularities of strongly F-regular rings: we have $0\leq s(R)\leq 1$ and $s(R)>0$ if and only if $R$ is strongly F-regular \cite{AberbachLeuschke}. It seems natural to investigate the infimum of the relative drop of Hilbert--Kunz multiplicity for tightly closed ideals. In this direction we propose the following question:

\begin{question}
\label{question: infimum positive tc}
Let $(R,\m)$ be a Noetherian local ring of prime characteristic $p>0$. Then is
\[
\inf_{\substack{I\subsetneq J, \sqrt{I}=\m \\ I=I^*, J=J^*}} \left \{
\frac{\ehk (I) - \ehk (J)}{\length (R/I) - \length (R/J)}
\right \} > 0?
\]
\end{question}

Of course, Question \ref{question: infimum positive tc}  holds when $R$ is strongly F-regular by the discussion above. But note that proving this infimum is positive for all \emph{weakly} F-regular rings would imply the long standing conjecture on the equivalence of weak and strong F-regularity.

We may reword the question as the existence of $\delta > 0$
such that whenever the relative Hilbert--Kunz multiplicity is not $0$, then
it is at least $\delta$. Such behavior is known to hold in the case where we have a finite scheme parametrizing the pairs of ideals by the techniques of \cite{SmirnovTucker}.
Let us present two such cases now which will serve as an evidence.
Essentially these cases result from fixing
the smaller ideal $I$ in Question~\ref{question: infimum positive tc}.

\begin{proposition}
\label{prop: go socle}
Let $(R,\m)$ be a Noetherian local ring of prime characteristic $p>0$. For a fixed $\mf m$-primary tightly closed ideal $I$  we have
\[
\inf_{I\subsetneq J=J^*} \left \{
\frac{\ehk (I) - \ehk (J)}{\length (R/I) - \length (R/J)}
\right \}
= \inf_{\substack{I\subsetneq J=J^* \\ \m J\subseteq I}} \left \{
\frac{\ehk (I) - \ehk (J)}{\length (R/I) - \length (R/J)}\right \}
\]
\end{proposition}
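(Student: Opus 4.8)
The inequality "$\ge$" between the two infima is immediate, since the right-hand infimum ranges over a subset of the pairs appearing in the left-hand one (those satisfying the extra constraint $\m J \subseteq I$). So the plan is to prove the reverse inequality. Writing $r(L) = \frac{\ehk(I) - \ehk(L)}{\length(R/I) - \length(R/L)}$ and noting that $\length(R/I) - \length(R/L) = \length(L/I)$, this amounts to showing that $r(J) \ge \delta$ for every tightly closed $\m$-primary ideal $J \supsetneq I$, where $\delta$ denotes the right-hand infimum (taken over socle layers $\m J \subseteq I$). The engine will be the elementary mediant inequality: if $a_1, a_2 \ge 0$ and $b_1, b_2 > 0$ then $\frac{a_1 + a_2}{b_1 + b_2} \ge \min\{a_1/b_1, a_2/b_2\}$, applied to the additivity $\ehk(I) - \ehk(J) = (\ehk(I) - \ehk(K)) + (\ehk(K) - \ehk(J))$ and $\length(J/I) = \length(K/I) + \length(J/K)$ along a chain $I \subseteq K \subseteq J$; recall also that $\ehk$ is nonincreasing on ideals, so each difference is $\ge 0$.

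First I would record that the class of tightly closed ideals is stable under the operations needed here: an intersection of tightly closed ideals is tightly closed, and a colon $(I :_R y)$ of a tightly closed ideal is tightly closed (if $c u^{p^e} \in (I :_R y)^{[p^e]} \subseteq (I^{[p^e]} :_R y^{p^e})$ for $e \gg 0$, then $c(uy)^{p^e} \in I^{[p^e]}$, so $uy \in I^* = I$), whence $(I :_R \m) = \bigcap_{y \in \m}(I :_R y)$ is tightly closed. Given a tightly closed $J \supsetneq I$ with $\m J \not\subseteq I$, I set $J' = J \cap (I :_R \m)$. Then $J'$ is tightly closed, one has $I \subsetneq J'$ because $\soc(J/I) = (J \cap (I :_R \m))/I \ne 0$, and $J' \subsetneq J$ precisely because $\m J \not\subseteq I$; moreover $\m J' \subseteq I$, so $J'$ is a socle layer over $I$ and hence $r(J') \ge \delta$. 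The mediant inequality applied to $I \subsetneq J' \subsetneq J$ then gives
\[
r(J) \ge \min\left\{ r(J'),\ \frac{\ehk(J') - \ehk(J)}{\length(J/J')} \right\},
\]
and I would set up an induction on the colength $\length(J/I)$, the base case $\m J \subseteq I$ being exactly the socle-layer case that holds by definition of $\delta$.

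The main obstacle is the leftover term $\frac{\ehk(J') - \ehk(J)}{\length(J/J')}$: this is a relative drop based at $J'$ rather than at the fixed ideal $I$, and socle layers over $J'$ need not be socle layers over $I$ (from $\m J' \subseteq I$ one only gets $\m^2(\,\cdot\,) \subseteq I$), so a naive induction does not close. The crux is therefore to control relative Hilbert--Kunz drops over the intermediate bases $K$ with $I \subseteq K \subsetneq \m$ that arise, and to bound them below by $\delta$. I would attack this by strengthening the inductive statement to all such tightly closed bases $K$ simultaneously and establishing the requisite base-change comparison; the key input should be the length description $\ehk(I) - \ehk(J) = \lim_{e} \length(J^{[p^e]}/I^{[p^e]})/p^{ed}$ together with the fact that $J'^{[p^e]}/I^{[p^e]}$ is annihilated by $\m^{[p^e]}$, which lets one compare the Frobenius filtration of $J/J'$ against that of $\soc(R/I)$. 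Alternatively, since for fixed $I$ the socle layers are parametrized by the subspaces of the finite-dimensional vector space $\soc(R/I) = (I :_R \m)/I$, I expect the lower-semicontinuity techniques of \cite{SmirnovTucker} over this finite-type projective family to show that the infimum is attained and thereby to bypass the base-change difficulty altogether; this is also the mechanism by which the present reduction feeds into the positivity results that follow.
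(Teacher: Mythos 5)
Your setup is correct and several of the auxiliary claims are fine (that colons and intersections preserve tightly closed ideals, hence $I :_R \m$ is tightly closed; that $J' = J \cap (I:_R\m)$ is a proper socle layer strictly containing $I$). But the core of the argument --- showing $r(J) \geq \delta$ for every tightly closed $J \supsetneq I$ --- is not actually proved. The mediant inequality leaves you with the term $\bigl(\ehk(J') - \ehk(J)\bigr)/\length(J/J')$, a relative drop based at $J'$ rather than at $I$, and as you yourself note the induction does not close: there is no reason such a drop is bounded below by $\delta$ (indeed, monotonicity of $\ehk(K) - \ehk(K+(u))$ in the base $K$ goes the wrong way, giving only an \emph{upper} bound for drops over larger bases). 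Neither of your two proposed repairs is carried out, so the proof has a genuine gap at exactly the step that carries the content of the proposition.

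The paper closes this gap by quoting \cite[Proposition~3.5]{SmirnovTucker}, which asserts outright that for \emph{any} ideal $J \supsetneq I$ there is a socle ideal $J' \subseteq I :_R \m$ with
\[
\frac{\ehk (I) - \ehk (J)}{\length (R/I) - \length (R/J)} \geq
\frac{\ehk (I) - \ehk (J')}{\length (R/I) - \length (R/J')},
\]
i.e., the whole reduction to socle ideals is taken as a black box; this is the substantive input you are missing and cannot be replaced by the naive filtration argument. The remaining bookkeeping in the paper is then: (i) the infimum over tightly closed $J$ equals the infimum over all $J$, since replacing $J$ by $J^*$ preserves the numerator ($\ehk(J) = \ehk(J^*)$) and enlarges the denominator; and (ii) the socle ideal $J'$ produced by the black box can be replaced by $(J')^*$, which is still contained in $I :_R \m$ because $cx^q(\m)^{[q]} \subseteq (J')^{[q]}(\m)^{[q]} \subseteq I^{[q]}$ forces $\m x \subseteq I^* = I$. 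Your observation that $J \cap (I:_R\m)$ is automatically tightly closed is a nice point that would let you skip (ii), but it does not substitute for the reduction itself.
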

\begin{proof}
First, let us note that
\[
\inf_{I\subsetneq J=J^*} \left \{
\frac{\ehk (I) - \ehk (J)}{\length (R/I) - \length (R/J)}
\right \}
=
\inf_{I\subsetneq J} \left \{
\frac{\ehk (I) - \ehk (J)}{\length (R/I) - \length (R/J)}
\right \}.
\]
One inequality is due to the second infimum being taken over a larger set,
while the other inequality holds after observing that
\begin{equation}\label{equation: relative and tc}
\frac{\ehk (I) - \ehk (J)}{\length (R/I) - \length (R/J)}
= \frac{\ehk (I) - \ehk (J^*)}{\length (R/I) - \length (R/J)}
\geq \frac{\ehk (I) - \ehk (J^*)}{\length (R/I) - \length (R/J^*)}.
\end{equation}

Next, using \cite[Proposition~3.5]{SmirnovTucker} (which is valid for any ideal), we obtain that for any ideal $J$ there exists a socle ideal $J' \subseteq I :_R \m$ such that
\[
\frac{\ehk (I) - \ehk (J)}{\length (R/I) - \length (R/J)} \geq
\frac{\ehk (I) - \ehk (J')}{\length (R/I) - \length (R/J')}.
\]
Finally, we use (\ref{equation: relative and tc}) again and observe
that $(J')^* \subseteq I :_R \m$. Namely, let $x \in (J')^*$, so there exists $c$ not in any minimal prime of $R$ such that $c x^q \in (J')^{[q]}$ for $q \gg 0$. It follows that $c x^{q} \m^{[q]} \subseteq \m^{[q]}(J')^{[q]} \subseteq I^{[q]}$,
hence $\m x \in I^* = I$.
\end{proof}

The next proposition essentially follows directly from \cite[Theorem 3.1]{HochsterYao}.

\begin{proposition}
\label{prop: inf for fixed tc I}
Let $(R,\m)$ be an excellent, reduced and equidimensional local ring of prime characteristic $p>0$. For a fixed $\mf m$-primary tightly closed ideal $I$ we have
\[
\inf_{I\subsetneq J} \left \{
\frac{\ehk (I) - \ehk (J)}{\length (R/I) - \length (R/J)}
\right \} > 0.
\]
\end{proposition}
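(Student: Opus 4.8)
The plan is to reduce the problem to a finite parametrizing scheme so that the techniques behind \cite[Theorem 3.1]{HochsterYao} can be applied. Since $I$ is fixed and $\m$-primary, by Proposition~\ref{prop: go socle} we may restrict attention to ideals $J$ with $\m J \subseteq I$, i.e.\ to \emph{socle ideals} sandwiched between $I$ and $I :_R \m$. The crucial point is that there are only finitely many such $J$ up to the relevant equivalence when the residue field is finite, but in general they are parametrized by a projective space (or a Grassmannian) over $k = R/\m$: an ideal $J$ with $\m J \subseteq I$ and $\length(J/I) = t$ corresponds to a $t$-dimensional subspace of the finite-dimensional $k$-vector space $(I :_R \m)/I$. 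Thus the pairs $(I, J)$ form a finite-type scheme over $k$, and the relative Hilbert--Kunz multiplicity $\ehk(I) - \ehk(J)$ together with the colength drop $\length(R/I) - \length(R/J)$ vary over this finite family.

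The key technical input is \cite[Theorem 3.1]{HochsterYao}, which I would invoke to control the behavior of Hilbert--Kunz multiplicity across this family. First I would observe that because $(I :_R \m)/I$ is a finite-dimensional $k$-vector space, the set of possible colength drops $\length(R/I) - \length(R/J) = \length(J/I)$ is a finite set of positive integers, bounded above by $\length((I :_R \m)/I)$. Therefore the infimum in question is really a minimum over finitely many possible denominators, and for each fixed denominator value we must bound the numerator $\ehk(I) - \ehk(J)$ away from $0$ uniformly. The content of \cite[Theorem 3.1]{HochsterYao} (applicable since $R$ is excellent, reduced, and equidimensional) is precisely that Hilbert--Kunz multiplicity behaves well in families parametrized by a scheme of finite type, yielding a uniform lower bound on the positive values of $\ehk(I) - \ehk(J)$ as $J$ ranges over the parametrizing scheme. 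Concretely, whenever $J \supsetneq I$ with both tightly closed, we have $\ehk(I) > \ehk(J)$ (the multiplicity strictly drops because $J$ is not in the tight closure of $I$), so the numerator is strictly positive, and the Hochster--Yao machinery forces it to be at least some $\delta > 0$ depending only on $I$.

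The main obstacle I anticipate is making precise the passage from the abstract parametrizing scheme to a genuine uniform positive lower bound on $\ehk(I) - \ehk(J)$. One must verify that the hypotheses of \cite[Theorem 3.1]{HochsterYao} are met after the reduction via Proposition~\ref{prop: go socle}, and that the scheme parametrizing socle ideals $J$ with fixed colength drop is indeed of finite type over the (possibly infinite) residue field $k$; the reducedness and equidimensionality of $R$ enter here to guarantee that $\ehk$ is a well-behaved, additive invariant whose values in the family are constructible and bounded below. A subtle point is that tight closure of $J$ need not be compatible with specialization in the family, so I would work first with all $J$ (not just tightly closed ones) using the infimum identity from the proof of Proposition~\ref{prop: go socle}, establish the uniform bound there via \cite{HochsterYao}, and only then restrict back to tightly closed $J$. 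Once the finite-type parametrization and the Hochster--Yao uniform bound are in hand, the conclusion that the infimum is strictly positive follows immediately.
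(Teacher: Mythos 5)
Your proposal is correct and ultimately rests on the same two pillars as the paper's proof: since $I$ is fixed and tightly closed in an excellent reduced equidimensional ring (so test elements exist), $\ehk(I)-\ehk(J)>0$ for every $J\supsetneq I$, and Hochster--Yao's Theorem~3.1 supplies a uniform positive lower bound $\delta>0$ for these drops; combined with an upper bound on the denominator this gives the claim. Where you differ is in how you bound the denominator and in how you frame the Hochster--Yao input. The paper does not need the socle reduction or any Grassmannian at all: it simply observes $\length(R/I)-\length(R/J)=\length(J/I)\leq \length(R/I)$, a fixed constant, so the ratio is at least $\bigl(\ehk(I)-\ehk(J)\bigr)/\length(R/I)$ and one applies Hochster--Yao directly. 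Your detour through Proposition~\ref{prop: go socle} and the parametrization of socle ideals by subspaces of $(I:_R\m)/I$ is the skeleton of the \emph{alternative} proof the paper mentions in the remark immediately after this proposition (via \cite[Corollary~3.7]{SmirnovTucker}), where semicontinuity over a finite-type parameter scheme does the work; but you then hand the uniform bound back to Hochster--Yao anyway, so the parametrization is doing nothing in your argument. Relatedly, your description of \cite[Theorem~3.1]{HochsterYao} as a statement about families over finite-type schemes is not accurate --- it is a direct assertion that, for a fixed $\m$-primary $I$, the positive values of $\ehk(I)-\ehk(J)$ over all $J\supseteq I$ are bounded away from $0$ --- and your worry about tight closure being incompatible with specialization is moot here, since the infimum in the statement already ranges over \emph{all} $J\supsetneq I$, with only $I$ assumed tightly closed. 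None of this is a gap in correctness, but the argument can be substantially streamlined.
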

\begin{proof}
We first note that
\[
\frac{\ehk (I) - \ehk (J)}{\length (R/I) - \length (R/J)} \geq \frac{\ehk (I) - \ehk (J)}{\length (R/I)}.
\]
Therefore
\begin{align*}
\inf_{I\subsetneq J} \left \{
\frac{\ehk (I) - \ehk (J)}{\length (R/I) - \length (R/J)}
\right \} &\geq
\frac{1}{\length (R/I)}
\inf_{I\subsetneq J} \left \{
\ehk (I) - \ehk (J)
\right \} 
\end{align*}
Since $R$ is excellent, reduced and equidimensional, $R$ has a completely stable test element (see \cite[Theorem 6.20]{HochsterHuneke1}) and $\widehat{R}$ is reduced and equidimensional. It follows from \cite[Theorem 8.17]{HochsterHuneke1} that $\ehk(I)\neq \ehk(J)$ for all $J\supsetneq I$ since $I$ is tightly closed. Thus the infimum above is positive by \cite[Theorem~3.1]{HochsterYao}.
\end{proof}

\begin{remark}
A different proof can be obtained by using the Grassmannian to parametrize
the socle ideals, see \cite[Corollary~3.7]{SmirnovTucker} and \cite[Remark~4.17]{SmirnovAffine}.
\end{remark}

In \cite{HochsterYao, SmirnovTucker} it was proved that if $I$ is a parameter ideal
then the infimum, in fact, does not depend on $I$.
We want to extend this result to $I$ being the tight closure of a parameter ideal.
For this, we need to
generalize the idea of transferring relative Hilbert--Kunz multiplicity to local cohomology
which originates from \cite[Proposition~2.3, Theorem~2.4]{HochsterYao}.

%

\begin{proposition}
\label{prop: go local cohomology}
Let $(R, \mf m)$ be an excellent, reduced and equidimensional local ring of prime
characteristic $p > 0$ and dimension $d$.
Denote $H = \lc^d_\mf m (R)$ and $T = 0^*_H$.
Then we have the following correspondence between certain $\m$-primary ideals of $R$ and submodules of $H$.
\begin{enumerate}
\item For every system of parameters $\ul x=x_1,\dots, x_d$ of $R$ and an ideal $I\supseteq (\ul x)^*$,
there exists a submodule $L \supseteq T$ of $H$ such that
$I/(\ul x)^* \cong L/T$.
\item Conversely, given $T \subseteq L \subset H$
such that $\length (L/T) < \infty$, there exists a system of parameters
$\ul x=x_1,\dots,x_d$ and an ideal $I\supseteq (\ul x)^* $, such that $I/(\ul {x})^* \cong L/T$.
\item Moreover, there is a system of parameters $\ul{x}=x_1,\dots,x_d$ such that for any $L$ such that $T\subseteq L \subseteq T :_H \m$, there exists an ideal $I\supseteq (\ul x)^* $, such that $I/(\ul {x})^* \cong L/T$.
\end{enumerate}

Furthermore, this correspondence extends to Hilbert--Kunz multiplicity. Namely,
the natural inclusion $i_L\colon I/(\ul x)^*\cong L/T \to H/T$ induces
maps $i_{L,e} \colon L/T \otimes_R F_*^e R \to H/T \otimes_R F_*^e R$ and we have
\[
\lim_{e \to \infty} \frac{\length_{F_*^e R} \left (\im i_{L,e} \right )}{p^{ed}} = \ehk ((\ul x)) - \ehk(I).
\]
\end{proposition}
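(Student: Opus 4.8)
The plan is to establish the three correspondence statements first, and then the Hilbert--Kunz formula, following the template of \cite[Proposition~2.3, Theorem~2.4]{HochsterYao} but adapted to the tight closure $T = 0^*_H$ of the top local cohomology. Throughout I would use the well-known fact that for a system of parameters $\ul x = x_1,\dots,x_d$, the top local cohomology $H = \lc^d_\m(R) = \varinjlim R/(x_1^t,\dots,x_d^t)$ can be computed as a direct limit along multiplication by $\ul x$, and that the socle-type submodules of $H$ correspond to ideals containing powers of $\ul x$. The guiding principle, as in Hochster--Yao, is the Matlis-dual/colimit dictionary: an ideal $I \supseteq (\ul x)$ gives a submodule of $R/(\ul x) \hookrightarrow H$ via the canonical map, and conversely every finite-length submodule of $H$ arises this way from some system of parameters after replacing $\ul x$ by high powers.

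For part (1), given $\ul x$ and $I \supseteq (\ul x)^*$, I would send $I$ to the image of $I$ under the natural map $R/(\ul x) \to H$ (the map into the $t=1$ stage of the colimit). The key point is that the kernel of $R/(\ul x) \to H$ is exactly $(\ul x)^* / (\ul x)$ \emph{when $R$ is excellent, reduced, and equidimensional}: this is precisely the statement that the tight closure of a parameter ideal is computed by the colimit map into $H$, which is where I would invoke the reduced-equidimensional hypotheses to guarantee a completely stable test element and that $0^*_H$ behaves well (via \cite[Theorem 6.20]{HochsterHuneke1} and the theory in \cite{HochsterHuneke1}). Thus the map factors through $R/(\ul x)^* \hookrightarrow H/T$, giving $L/T \cong I/(\ul x)^*$ where $L$ is the preimage in $H$ of the image of $I$. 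For part (2), given a finite-length $T \subseteq L \subset H$, I would choose $t \gg 0$ so that $L$ already appears in the stage $R/(x_1^t,\dots,x_d^t)$ of the colimit; then replacing $\ul x$ by $\ul x^{[t]} = x_1^t,\dots,x_d^t$ and pulling back $L$ gives an ideal $I$ with the desired isomorphism. Part (3) is a uniform refinement: I would show that a single choice of $\ul x$ (replacing by high powers once) works for \emph{all} $L$ in the finite-length range $T \subseteq L \subseteq T :_H \m$, since $T :_H \m / T$ has finite length and hence is captured at a single finite stage of the colimit.

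For the Hilbert--Kunz formula, the plan is to apply the Frobenius functor $F_*^e(-)$ to the inclusion $i_L\colon L/T \to H/T$ and track lengths. The essential computation is that $\ehk((\ul x)) - \ehk(I)$ measures exactly the "amount" of $L/T$ that survives into $H/T$ after Frobenius, which is the length of the image $\im i_{L,e}$ normalized by $p^{ed}$. I would set this up by writing $\ehk((\ul x)) = \ehk((\ul x)^*)$ (since tight closure preserves Hilbert--Kunz multiplicity by \cite[Theorem 8.17]{HochsterHuneke1}) and expressing $\ehk((\ul x)^*) - \ehk(I)$ via the length of $\length(F_*^e R \otimes I/(\ul x)^*)$ corrected by the kernel of $i_{L,e}$; the kernel contributes a lower-order term because it is supported where the map into $H$ fails to be injective, which after Frobenius and normalization vanishes in the limit precisely by the existence of a completely stable test element controlling $0^*_H$.

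The main obstacle I expect is \emph{identifying the kernel of $i_{L,e}$ correctly and proving its normalized length tends to zero}. The inclusion $L/T \to H/T$ is injective, but after applying the right-exact Frobenius functor it need not remain injective, and $\im i_{L,e}$ differs from $L/T \otimes F_*^e R$ by this kernel. Controlling $\lim_e \length(\ker i_{L,e})/p^{ed}$ is exactly the delicate point: it requires showing that the "defect" of Frobenius-injectivity is concentrated in the tight-closure module $T = 0^*_H$, so that modding out by $T$ removes it asymptotically. This is where the hypotheses of excellence, reducedness, and equidimensionality are indispensable, and where I would lean most heavily on the machinery of \cite{HochsterYao}, generalizing their $I = (\ul x)$ argument to $I = (\ul x)^*$ by systematically replacing $R/(\ul x)$ with $R/(\ul x)^* \cong (\text{image in } H)$ throughout.
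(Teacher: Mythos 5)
Your treatment of the correspondence statements (1)--(3) matches the paper's: both rest on the identification $\varinjlim_n R/(\ul x^n)^* \cong H/T$ with injective transition maps, and your part (3) argument (that $(T:_H\m)/T$ has finite length and is therefore captured at a single finite stage) is exactly the paper's. One imprecision: the kernel of $R/(\ul x)\to H$ is the \emph{limit closure} of $(\ul x)$, not $(\ul x)^*/(\ul x)$ (in a Cohen--Macaulay non-F-rational ring that kernel is zero while $(\ul x)^*\neq(\ul x)$); the correct statement, and the one you actually need, is that $(\ul x)^*$ is the preimage of $T=0^*_H$ under $R\to H$, which does give the injection $R/(\ul x)^*\hookrightarrow H/T$. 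This is where excellence, reducedness and equidimensionality enter, via colon-capturing and test elements.

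The genuine gap is in the Hilbert--Kunz formula, and you have flagged it yourself: you reduce everything to showing that the defect between $\length(\im i_{L,e})$ and the quantity you want is $o(p^{ed})$, assert that this ``vanishes in the limit precisely by the existence of a completely stable test element,'' and stop. That is not an argument, and the paper does not proceed by estimating $\ker i_{L,e}$ at all. Instead it runs a squeeze: a commutative diagram
identifying $\im i_{L,e}$ with $L^{[p^e]}/T^{[p^e]}$ (using the Frobenius action on $H$ and the F-stability $T^{[p^e]}\subseteq T$, which you never address --- without it the surjection $H/T^{[p^e]}\twoheadrightarrow H/T$ and hence the lower bound is unavailable) yields
\[
\length\bigl(I^{[p^e]}/((\ul x)^*)^{[p^e]}\bigr)\;\geq\;\length_{F^e_*R}(\im i_{L,e})\;\geq\;\length\bigl((I^{[p^e]}+(\ul x^{p^e})^*)/(\ul x^{p^e})^*\bigr),
\]
and then both outer terms, divided by $p^{ed}$, are shown to converge to $\ehk((\ul x))-\ehk(I)$. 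The lower limit requires the nontrivial input $\lim_e \length\bigl(R/(\ul x^{p^e})^*\bigr)/p^{ed}=\ehk((\ul x))$ (the paper cites \cite[Lemma 1.13]{SmirnovEquimultiplicity}, which is where the test element is actually used). Your proposal contains neither the F-stability observation, nor the comparison with $R/(\ul x^{p^e})^*$, nor this limit; without them the key identity is unproved.
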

\begin{proof}
First, it is well-known (for example, by the argument as in \cite[Proposition 3.3]{Smith}) that
\begin{equation}
\label{equation: direct limit lc}
\varinjlim_{n} R/(\ul x^n)^* \cong \lc^d_\mf m (R) / 0^*_{\lc^d_\mf m (R)} = H/T
\end{equation}
and that all maps in the direct limit system are injective: for if $\overline{r}\in R/(\ul x^n)^*$ is mapped to $0$ in the direct limit system, then there exists $t\geq 0$ such that $r(x_1\cdots x_d)^t\in (\ul x^{n+t})^*$ and it follows from colon-capturing that $r\in (\ul x^n)^*$ and thus $\overline{r}=0$. Here we can use colon-capturing since $R$ is excellent and equidimensional (see \cite[Theorem 2.3]{Huneke}). $(1)$ and $(2)$ follow immediately from this.

To see (3), we note that by applying $\Hom_R(R/\m, -)$ to (\ref{equation: direct limit lc}) we obtain that
$$\varinjlim_{n} \frac{(\ul x^n)^*:_R \m}{(\ul x^n)^*} \cong \frac{T:_H \m}{T}$$
where all maps in the direct limit system remain injective (as $\Hom_R(R/\m, -)$ is left exact). Since $(T:_H \m)/T$ is a finite dimensional $(R/\m)$-vector space, there exists $n\gg0$ such that $((\ul x^n)^*:_R \m)/(\ul x^n)^*\cong (T:_H \m)/T$. Now $(3)$ is clear by replacing $(\ul x)$ by $(\ul x^n)$.

We now prove the last statement. We consider the following commutative diagram
\[\xymatrix{
I^{[p^e]}/((\ul x)^*)^{[p^e]} \ar[r]^-{i_{L,e}} \ar@{^{(}->}[d] & L^{[p^e]}/T^{[p^e]} \ar@{^{(}->}[d] \\
R/((\ul x)^*)^{[p^e]} \ar[r] \ar@{->>}[d] & H/T^{[p^e]} \ar@{->>}[d] \\
R/(\ul x^{p^e})^* \ar@{^{(}->}[r] & H/T
}.
\]
Here we note that $L^{[p^e]}/T^{[p^e]}\cong \im i_{L,e}$ after identifying $F^e_*R$ with $R$, and we have a natural surjection $H/T^{[p^e]}\twoheadrightarrow H/T$ since $T\subseteq H$ is an F-stable submodule of $H$ (under the natural Frobenius action on $H=\lc^d_\mf m (R)$) and $T^{[p^e]}$ can be identified with the $R$-span of the image of $T$ under the $e$-th iterates of this Frobenius action, in particular $T^{[p^e]}\subseteq T$. Chasing the image of $I^{[p^e]}/((\ul x)^*)^{[p^e]}$ in the diagram above we find that
$$
\length(I^{[p^e]}/((\ul x)^*)^{[p^e]}) \geq \length_{F^e_*R}(\im i_{L,e}) = \length(L^{[p^e]}/T^{[p^e]})\geq \length((L^{[p^e]}+T)/T) \geq \length((I^{[p^e]}+ (\ul x^{p^e})^*)/ (\ul x^{p^e})^*)
$$
where the last inequality follows from the injectivity of the last row of the diagram. Thus we have
\begin{align*}
\ehk((\ul x)) - \ehk(I) & = \ehk((\ul x)^*) - \ehk(I) \\
   & = \lim_{e\to\infty}\frac{\length(I^{[p^e]}/((\ul x)^*)^{[p^e]})}{p^{ed}} \\
   & \geq \lim_{e\to\infty} \frac{\length_{F^e_*R}(\im i_{L,e})}{p^{ed}}  \\
   & \geq \lim_{e\to\infty} \frac{\length((I^{[p^e]}+ (\ul x^{p^e})^*)/ (\ul x^{p^e})^*)}{p^{ed}}\\
   & \geq  \lim_{e\to\infty} \frac{\length(R/ (\ul x^{p^e})^*)}{p^{ed}} - \lim_{e\to\infty} \frac{\length(R/I^{[p^e]})}{p^{ed}} \\
   & = \ehk((\ul x)) - \ehk(I)
\end{align*}
where the last equality follows from \cite[Lemma 1.13]{SmirnovEquimultiplicity} (note that $R$ has a test element since $R$ is excellent and reduced, see \cite[Theorem 6.20]{HochsterHuneke1}, so the assumptions of \cite[Lemma 1.13]{SmirnovEquimultiplicity} are satisfied). It follows that we must have equality throughout and the result follows.
\end{proof}

Now we can prove our partial result towards Question~\ref{question: infimum positive tc}, this result can be viewed as a generalization of the main result of \cite{HochsterYao} (there the main result is the case that $(\ul x)^*=(\ul x)$, i.e., $R$ is F-rational).

\begin{corollary}
\label{cor: tight inf relative eHK sop}
Let $(R,\m)$ be an excellent, reduced and equidimensional local ring of prime characteristic $p>0$. Then we have
\[
\inf_{I=(\ul x)^*\subsetneq J=J^*} \left \{
\frac{\ehk (I) - \ehk (J)}{\length (R/I) - \length (R/J)}
\right \}
> 0,
\]
where $I=(\ul x)^*\subsetneq J=J^*$ means that $I$ is the tight closure of some system of parameters $(\ul x)$ of $R$ (i.e., we are allowed to vary the system of parameters).
\end{corollary}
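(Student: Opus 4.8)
The plan is to use the local cohomology transfer of Proposition~\ref{prop: go local cohomology} to strip away the dependence on the system of parameters, so that the infimum over \emph{all} tight closures of systems of parameters collapses to the infimum for a \emph{single} fixed tightly closed ideal, where positivity is already guaranteed by Proposition~\ref{prop: inf for fixed tc I}. First I would reduce to socle ideals. Since $(\ul x)^*$ is tightly closed, Proposition~\ref{prop: go socle} (together with the first displayed equality in its proof, which lets us drop the requirement that $J$ be tightly closed) gives, for each system of parameters $\ul x$,
\[
\inf_{(\ul x)^* \subsetneq J = J^*} \frac{\ehk((\ul x)^*) - \ehk(J)}{\length(R/(\ul x)^*) - \length(R/J)}
= \inf_{\substack{(\ul x)^* \subsetneq J \\ \m J \subseteq (\ul x)^*}} \frac{\ehk((\ul x)^*) - \ehk(J)}{\length(R/(\ul x)^*) - \length(R/J)}.
\]
Taking the infimum over all $\ul x$, it therefore suffices to bound below the infimum over all pairs $((\ul x)^*, J)$ in which $J$ is a socle ideal over $(\ul x)^*$.

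Next I would invoke Proposition~\ref{prop: go local cohomology} with $H = \lc^d_\m(R)$ and $T = 0^*_H$. For such a socle pair, part $(1)$ produces a submodule $T \subseteq L \subseteq T :_H \m$ with $J/(\ul x)^* \cong L/T$; here the socle condition $\m J \subseteq (\ul x)^*$ is exactly what forces $L \subseteq T :_H \m$. Using $\ehk((\ul x)^*) = \ehk((\ul x))$, the numerator equals $\ehk((\ul x)) - \ehk(J) = \lim_{e\to\infty} \length_{F^e_* R}(\im i_{L,e})/p^{ed}$, while the denominator is $\length(J/(\ul x)^*) = \length(L/T)$. The crucial point is that both of these quantities depend only on the submodule $L$ of $H$ and not on the chosen system of parameters. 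Consequently the infimum over all socle pairs equals the intrinsic quantity
\[
\inf_{T \subsetneq L \subseteq T :_H \m} \frac{\lim_{e\to\infty} \length_{F^e_* R}(\im i_{L,e})/p^{ed}}{\length(L/T)}.
\]

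Finally, part $(3)$ of Proposition~\ref{prop: go local cohomology} supplies a \emph{single} system of parameters $\ul x_0$ with the property that every submodule $L$ satisfying $T \subseteq L \subseteq T :_H \m$ is realized by a socle ideal over $I_0 := (\ul x_0)^*$. Hence the infimum over all $L$ above coincides with the infimum over socle ideals $J$ lying over the one fixed ideal $I_0$, which by Proposition~\ref{prop: go socle} equals $\inf_{I_0 \subsetneq J} \{(\ehk(I_0) - \ehk(J))/(\length(R/I_0) - \length(R/J))\}$. As $I_0 = (\ul x_0)^*$ is tightly closed and $R$ is excellent, reduced and equidimensional, this last infimum is strictly positive by Proposition~\ref{prop: inf for fixed tc I}, which yields the claim.

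The real content is not in this deduction but in Proposition~\ref{prop: go local cohomology} itself, and in particular its part $(3)$, asserting that one system of parameters already realizes every socle-level submodule of $H/T$. Once that intrinsic description of relative Hilbert--Kunz multiplicity is available, the dependence on $\ul x$ evaporates and the corollary reduces to the already-settled fixed-ideal case; so the main obstacle has effectively been absorbed into the preceding proposition.
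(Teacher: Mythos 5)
Your proposal is correct and follows essentially the same route as the paper: reduce to socle ideals via Proposition~\ref{prop: go socle}, transfer to submodules $T\subseteq L\subseteq T:_H\m$ of $H/T$ via Proposition~\ref{prop: go local cohomology}, use part (3) there to realize all such $L$ over a single system of parameters, and conclude with Proposition~\ref{prop: inf for fixed tc I}. The only cosmetic difference is that you assert equalities where the paper is content with inequalities in the needed direction.
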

\begin{proof}
First of all, by Proposition~\ref{prop: go socle}, we know that
$$\inf_{I=(\ul x)^*\subsetneq J=J^*} \left \{
\frac{\ehk (I) - \ehk (J)}{\length (R/I) - \length (R/J)}
\right \} = \inf_{\substack{I=(\ul x)^*\subsetneq J=J^* \\ \m J\subseteq I}} \left \{
\frac{\ehk (I) - \ehk (J)}{\length (R/I) - \length (R/J)}
\right \}$$
Then by the last assertion of Proposition~\ref{prop: go local cohomology} (we use the same notation as in Proposition~\ref{prop: go local cohomology}), we have that
$$\inf_{\substack{I=(\ul x)^*\subsetneq J=J^* \\ \m J\subseteq I}} \left \{
\frac{\ehk (I) - \ehk (J)}{\length (R/I) - \length (R/J)}
\right \} \geq \inf_{\substack{T \subsetneq L \\ \m L\subseteq T}}
\lim_{e \to \infty} \frac{\length_{F_*^e R} \left (\im i_{L, e} \right )}{p^{ed}\length (L/T)}.$$
By Proposition~\ref{prop: go local cohomology} (3), there exists a single system of parameters $\underline{y}=y_1, \ldots, y_d$ of $R$ such that
\[
\inf_{\substack{T \subsetneq L \\ \m L\subseteq T}}
\lim_{e \to \infty} \frac{\length_{F_*^e R} \left (\im i_{L, e} \right )}{p^{ed}\length (L/T)}
= \inf_{\substack{(\underline{y})^*\subsetneq J \\ \m J \subseteq(\ul y)^*}} \left \{
\frac{\ehk ((\ul y)^*) - \ehk (J)}{\length (R/((\ul y)^*) - \length (R/J)} \right \}.
\]
But now since $\underline{y}$ is fixed, the latter infimum is positive by Proposition~\ref{prop: inf for fixed tc I}.
\end{proof}

\subsection{Suprema} Finally, in this subsection we study the supremum of the relative drops of Hilbert--Kunz multiplicities. We first recall the following simple but critical lemma. Among other things, this lemma implies that the supremum is always $\leq \ehk(R)$, and that if we consider the supremum of the ratio between Hilbert--Kunz multiplicity and colength, then we always get $\ehk(R)$ (and the supremum is achieved at the maximal ideal).

\begin{lemma}[{\cite[Lemma~4.2]{WatanabeYoshida}}]
\label{lem: sup HK upper}
Let $(R, \mf m)$ be a Noetherian local ring of prime characteristic $p > 0$. Then for every pair of $\m$-primary ideals $I\subsetneq J$, we have
\[
\ehk(I)\leq \ehk(R)\length(R/I) \,\ \text{ and } \,\ \frac{\ehk (I) - \ehk (J)}{\length (R/I) - \length (R/J)} \leq \ehk(R).
\]
\end{lemma}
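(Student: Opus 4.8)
The plan is to deduce both inequalities from a single bound on how colength behaves under the Peskine--Szpiro Frobenius functor $F^e$, which is right exact and sends $R/I$ to $R/I^{[q]}$ (here $q = p^e$). The key observation I would record first is the following filtration estimate: for every finite length $R$-module $N$ one has
\[
\length\!\left(F^e(N)\right) \leq \length(N)\cdot \length\!\left(R/\m^{[q]}\right).
\]
To see this, choose a composition series $0 = N_0 \subsetneq N_1 \subsetneq \cdots \subsetneq N_\ell = N$ with $\ell = \length(N)$ and each $N_i/N_{i-1} \cong k := R/\m$. Applying the right exact functor $F^e$ to $0 \to N_{i-1} \to N_i \to k \to 0$ yields an exact sequence $F^e(N_{i-1}) \to F^e(N_i) \to F^e(k) \to 0$; since $F^e(k) = R/\m^{[q]}$, we get $\length(F^e(N_i)) \leq \length(F^e(N_{i-1})) + \length(R/\m^{[q]})$, and the estimate follows by induction on $i$.

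With this in hand, the first inequality is immediate. Taking $N = R/I$ and using $F^e(R/I) = R/I^{[q]}$ gives $\length(R/I^{[q]}) \leq \length(R/I)\cdot \length(R/\m^{[q]})$; dividing by $q^d$ and letting $e \to \infty$ yields $\ehk(I) \leq \length(R/I)\cdot \ehk(\m) = \ehk(R)\length(R/I)$, since $\lim_{e\to\infty} \length(R/\m^{[q]})/q^d = \ehk(R)$.

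For the relative inequality, I would apply $F^e$ to the short exact sequence $0 \to J/I \to R/I \to R/J \to 0$. Right exactness produces $F^e(J/I) \to R/I^{[q]} \to R/J^{[q]} \to 0$, and exactness identifies the image of the first map with $\ker\!\left(R/I^{[q]} \to R/J^{[q]}\right) = J^{[q]}/I^{[q]}$. Hence $J^{[q]}/I^{[q]}$ is a quotient of $F^e(J/I)$, so
\[
\length(R/I^{[q]}) - \length(R/J^{[q]}) = \length\!\left(J^{[q]}/I^{[q]}\right) \leq \length\!\left(F^e(J/I)\right) \leq \length(J/I)\cdot \length\!\left(R/\m^{[q]}\right),
\]
the last step being the filtration estimate applied to $N = J/I$. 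Dividing by $q^d$, letting $e \to \infty$, and using $\length(J/I) = \length(R/I) - \length(R/J)$ gives $\ehk(I) - \ehk(J) \leq \ehk(R)\big(\length(R/I) - \length(R/J)\big)$, which is the claimed bound.

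The only delicate point, and the closest thing to an obstacle, is that $F^e$ is merely right exact: applying it to $0 \to J/I \to R/I \to R/J \to 0$ does not compute $F^e(J/I)$ exactly but only exhibits $J^{[q]}/I^{[q]}$ as a quotient of it. This is precisely the direction of inequality we need, so no control over the kernel of $F^e(J/I) \to R/I^{[q]}$ is required; everything else is routine manipulation of the defining limit of $\ehk$.
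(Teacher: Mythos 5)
Your proof is correct, and it is essentially the same argument as the paper's (the paper only cites Watanabe--Yoshida and keeps its proof in a comment block): both reduce to a composition series of $J/I$ (resp.\ of $R/I$) and the observation that each length-one step contributes at most $\length(R/\m^{[q]})$ after applying the $e$-th Frobenius. Your packaging via the right exact Peskine--Szpiro functor $F^e$ is just a functorial restatement of the paper's explicit computation with socle generators $u_n$ satisfying $\m u_n \subseteq I_n$, hence $\m^{[q]} u_n^q \subseteq I_n^{[q]}$.
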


Our first observation is that using parameter ideals we can always reach the upper bound (with more assumptions this can be also deduced from \cite[Theorem~3.5]{EpsteinValidashti}).

\begin{proposition}
\label{prop: sup relative eHK}
Let $(R, \mf m)$ be a Noetherian local ring of prime characteristic $p > 0$
and dimension $d \geq 1$ and let $J = (x_1, \ldots, x_d)$ be a parameter ideal.
Then $$\ehk (\mf mJ) - \ehk(J) = d \ehk(R).$$
In particular, we have
\[
\sup_{I\subsetneq J, \sqrt{I}=\m} \left \{
\frac{\ehk (I) - \ehk (J)}{\length (R/I) - \length (R/J)} \right \} = \ehk (R).
\]
\end{proposition}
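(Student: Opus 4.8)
The plan is to establish the displayed identity $\ehk(\mf m J) - \ehk(J) = d\,\ehk(R)$ first, since the supremum statement then follows formally. Writing $q = p^e$ and using $(\mf m J)^{[q]} = \mf m^{[q]}J^{[q]}$, for each $e$ the short exact sequence
\[
0 \to J^{[q]}/\mf m^{[q]}J^{[q]} \to R/\mf m^{[q]}J^{[q]} \to R/J^{[q]} \to 0
\]
gives $\length(R/(\mf m J)^{[q]}) - \length(R/J^{[q]}) = \length(J^{[q]}/\mf m^{[q]}J^{[q]})$. Dividing by $q^d$ and letting $e \to \infty$, the identity reduces to the single limit
\[
\lim_{e \to \infty} \frac{\length(J^{[q]}/\mf m^{[q]}J^{[q]})}{q^d} = d\,\ehk(R).
\]

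For the upper bound I would use the surjection $\phi_e \colon (R/\mf m^{[q]})^{\oplus d} \twoheadrightarrow J^{[q]}/\mf m^{[q]}J^{[q]}$ sending the $i$-th basis vector to the class of $x_i^q$; this yields $\length(J^{[q]}/\mf m^{[q]}J^{[q]}) \leq d\,\length(R/\mf m^{[q]})$, hence the bound $\leq d\,\ehk(R)$ in the limit. For the matching lower bound I would control $\ker \phi_e$. Identifying it with the image in $(R/\mf m^{[q]})^{\oplus d}$ of the syzygy module $Z_e$ of $(x_1^q,\dots,x_d^q)$, the key observation is that each Koszul syzygy $x_j^q e_i - x_i^q e_j$ has all entries in $\mf m^{[q]}$ (since $x_i \in \mf m$), so it maps to zero in $(R/\mf m^{[q]})^{\oplus d}$. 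Consequently $\ker \phi_e$ is a quotient of the first Koszul homology $H_1(x_1^q,\dots,x_d^q;R)$, and in particular $\length(\ker\phi_e) \leq \length H_1(x_1^q,\dots,x_d^q;R)$.

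The main obstacle is therefore the asymptotic estimate $\lim_{e\to\infty} \length H_1(x_1^q,\dots,x_d^q;R)/q^d = 0$ for the system of parameters $x_1,\dots,x_d$; this is a standard fact, essentially equivalent to the well-known equality $\ehk(J) = \eh(J)$ for parameter ideals. I would note that the top Koszul homology is harmless, since $H_d(x_1^q,\dots,x_d^q;R) = 0 :_R J^{[q]} \subseteq \lc^0_{\mf m}(R)$ has length bounded independently of $e$ (as $d \geq 1$), and that the intermediate homologies can be handled by induction on $d$ through the long exact sequences of Koszul homology, or simply cited. Granting this estimate, $\length(\ker\phi_e) = o(q^d)$, so $\length(J^{[q]}/\mf m^{[q]}J^{[q]}) \geq d\,\length(R/\mf m^{[q]}) - o(q^d)$, which gives the reverse inequality and hence the identity.

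Finally, the supremum statement follows quickly. Since $J$ is a parameter ideal in a ring of dimension $d$, Krull's height theorem forces $\numg(J) = d$, so $\length(R/\mf m J) - \length(R/J) = \length(J/\mf m J) = d$; combined with the identity this gives
\[
\frac{\ehk (\mf m J) - \ehk (J)}{\length (R/\mf m J) - \length (R/J)} = \ehk(R).
\]
As Lemma~\ref{lem: sup HK upper} bounds every relative drop by $\ehk(R)$, the supremum equals $\ehk(R)$ and is attained by the pair $\mf m J \subsetneq J$.
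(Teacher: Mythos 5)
Your proof is correct and follows essentially the same route as the paper: both arguments rest on the short exact sequence $0 \to J/\mf m J \to R/\mf m J \to R/J \to 0$ (yours unrolled at the level of bracket powers), with the error term controlled by the first Koszul homology of $x_1^q,\dots,x_d^q$, whose length is $o(p^{ed})$ by the standard result the paper cites from Roberts' book; your explicit syzygy computation of $\ker\phi_e$ is just an unwound version of the paper's $\Tor_1^R(R/J, F^e_*R)$ base-change argument, and your direct upper bound via the surjection $\phi_e$ replaces the paper's appeal to Lemma~\ref{lem: sup HK upper}.
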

\begin{proof}
Consider the following short exact sequence
$$0 \to J/\mf m J \to R/\mf mJ \to R/J \to 0.$$
Since $J / \mf m J \cong (R/\mf m)^{\oplus d}$, base change along the Frobenius we obtain that
\[
\length (R/\frq{(\mf mJ)}) + \length_{F_*^e R} \left(\Tor_1^R (R/J, F_*^e R) \right) \geq
d \length (R/\frq{\mf m}) + \length (R/\frq{J}).
\]
Note that there is a natural surjection $H_1(x_1,\dots, x_d; F^e_*R) \twoheadrightarrow \Tor_1^R (R/J, F_*^e R)$, and thus by \cite[Theorem 7.3.5]{RobertsBook}
we deduce that $\length_{F_*^e R} \left(\Tor_1^R (R/J, F_*^e R) \right )  = o (p^{ed})$.
It follows that
$$\ehk (\mf mJ) - \ehk(J) \geq d \ehk(R).$$
But then we must have equality above by Lemma~\ref{lem: sup HK upper} as $d = \length (J/\mf mJ)$. The second assertion clearly follows from the first assertion and Lemma~\ref{lem: sup HK upper}.
\end{proof}

The result above has a consequence in tight closure theory, which we believe should be known to experts, but we could not find a reference at this level of generality. If $R$ is excellent and analytically irreducible, it follows from
\cite[Proposition~4.2]{VraciuSpTC} as $(\m J)^*$ is contained in the \emph{special}
tight closure.

\begin{corollary}
\label{cor: tight closure containment}
Let $(R, \mf m)$ be a Noetherian local ring of prime characteristic $p > 0$
and dimension $d \geq 1$ and let $J$ be a parameter ideal.
Then $(\mf m J)^* \cap J = \mf m J$. In particular, if $R$ is F-rational, then
$\mf mJ$ is tightly closed.
\end{corollary}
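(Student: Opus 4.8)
The plan is to identify the intermediate ideal $I' := (\mf m J)^* \cap J$, which sits in the chain $\mf m J \subseteq I' \subseteq J$, and to force $I' = \mf m J$ by a length count. The engine of the argument is the \emph{sharp} equality $\ehk(\mf m J) - \ehk(J) = d\,\ehk(R)$ of Proposition~\ref{prop: sup relative eHK}: I would convert it into a lower bound $\length(J/I') \geq d$ and then compare it against the trivial upper bound $\length(J/I') \leq \length(J/\mf m J) = d$.

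First I would record the two sandwiches $\mf m J \subseteq I' \subseteq J$ and $\mf m J \subseteq I' \subseteq (\mf m J)^*$, both immediate from the definition of $I'$. The second one places $I'$ between $\mf m J$ and its tight closure, so the elementary direction of \cite[Theorem~8.17]{HochsterHuneke1} (recalled in Section~2: $\mf m$-primary ideals with the same tight closure have equal Hilbert--Kunz multiplicity) gives
\[
\ehk(I') = \ehk(\mf m J).
\]

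Next I would observe that $I' \subsetneq J$: an equality $I' = J$ would mean $J \subseteq (\mf m J)^*$, whence $\ehk(J) = \ehk(\mf m J)$, contradicting Proposition~\ref{prop: sup relative eHK}. Applying Lemma~\ref{lem: sup HK upper} to $I' \subsetneq J$ and substituting the displayed identity together with Proposition~\ref{prop: sup relative eHK} yields
\[
d\,\ehk(R) = \ehk(\mf m J) - \ehk(J) = \ehk(I') - \ehk(J) \leq \ehk(R)\,\length(J/I').
\]
Since $\ehk(R) \geq 1 > 0$, this forces $\length(J/I') \geq d$. On the other hand, $J$ is a parameter ideal, so $\numg(J) = d$ and hence $\length(J/\mf m J) = d$; as $\mf m J \subseteq I'$, we get $\length(J/I') \leq d$. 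Therefore $\length(J/I') = \length(J/\mf m J)$, and the chain $\mf m J \subseteq I' \subseteq J$ forces $I' = \mf m J$, i.e. $(\mf m J)^* \cap J = \mf m J$. For the final assertion, if $R$ is F-rational then $J = J^*$, so $(\mf m J)^* \subseteq J^* = J$; combined with $(\mf m J)^* \cap J = \mf m J$ this gives $(\mf m J)^* = \mf m J$, so $\mf m J$ is tightly closed.

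The step carrying the real content is the identity $\ehk(I') = \ehk(\mf m J)$, and I would flag it as the point to get right: since the corollary is stated for an arbitrary Noetherian local ring, it must hold with no excellence or equidimensionality hypothesis. This is exactly why it is worth isolating that only the easy direction of \cite[Theorem~8.17]{HochsterHuneke1}---that $\mathfrak a \subseteq \mathfrak b \subseteq \mathfrak a^*$ implies $\ehk(\mathfrak a) = \ehk(\mathfrak b)$---is invoked, since that direction is hypothesis-free; everything downstream is a formal comparison of lengths anchored to the exact value furnished by Proposition~\ref{prop: sup relative eHK}.
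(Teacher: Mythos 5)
Your proof is correct and follows essentially the same route as the paper's: both hinge on $\ehk((\mf m J)^*\cap J)=\ehk(\mf m J)$ (the hypothesis-free direction of \cite[Theorem 8.17]{HochsterHuneke1}, since $\mf m J\subseteq (\mf m J)^*\cap J\subseteq (\mf m J)^*$), on the exact value $d\,\ehk(R)$ from Proposition~\ref{prop: sup relative eHK}, and on the upper bound of Lemma~\ref{lem: sup HK upper}; you phrase the conclusion as the length count $\length(J/I')\geq d=\length(J/\mf m J)$ while the paper phrases it as the ratio strictly exceeding $\ehk(R)$, which is the same arithmetic. Your explicit verification that $I'\subsetneq J$ is a small point the paper's proof leaves implicit, but it does not change the substance of the argument.
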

\begin{proof}
Suppose that $\mf m J \subsetneq (\mf m J)^* \cap J$. Since we know that
$\ehk (I) = \ehk (I^*)$, by Proposition~\ref{prop: sup relative eHK} we have that
\[
\ehk (R) = \frac{\ehk (\mf mJ) - \ehk (J)}{\length (R/\mf mJ) - \length (R/J)}
< \frac{\ehk ((\mf mJ)^* \cap J) - \ehk (J)}
{\length (R/ (\mf mJ)^* \cap J) - \length(R/J)},
\]
where the strict inequality is because the numerator is the same (and nonzero by Proposition~\ref{prop: sup relative eHK}) and the denominator strictly decrease. But this contradicts Lemma~\ref{lem: sup HK upper}. For the last assertion, note that if $R$ is F-rational then
$(\m J)^*\subseteq J^*=J$ and thus
\[(\mf m J)^*= (\mf m J)^* \cap J = \mf m J. \qedhere\]
\end{proof}


As a consequence, we obtain the following partial result on the supremum of relative drop of Hilbert--Kunz multiplicity for tightly closed ideals.

\begin{corollary}
\label{cor: tight sup relative eHK}
Let $(R, \mf m)$ be a Noetherian local ring of prime characteristic $p > 0$
and dimension $d \geq 1$ and let $J$ be a parameter ideal.
Suppose that $J^* = J + (\mf m J)^*$ (e.g., $R$ is F-rational),
then
\[
\sup_{\substack{I\subsetneq J, \sqrt{I}=\m \\ I=I^*, J=J^*}} \left \{
\frac{\ehk (I) - \ehk (J)}{\length (R/I) - \length (R/J)} \right \} = \ehk (R).
\]
\end{corollary}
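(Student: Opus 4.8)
The plan is to obtain the upper bound for free and then realize the value $\ehk(R)$ by an explicit tightly closed pair. Since the supremum here ranges over a subclass of all pairs $I \subsetneq J$, Lemma~\ref{lem: sup HK upper} immediately gives that it is $\leq \ehk(R)$; the entire content is the reverse inequality.

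For the lower bound I would simply take the tight closures of the pair used in Proposition~\ref{prop: sup relative eHK}, namely $(\mf m J)^* \subsetneq J^*$, where $J$ is the given parameter ideal. Both ideals are tightly closed and $\mf m$-primary, and $(\mf m J)^* \subseteq J^*$ since $\mf m J \subseteq J$. Because Hilbert--Kunz multiplicity is unchanged under tight closure, the numerator is
\[
\ehk((\mf m J)^*) - \ehk(J^*) = \ehk(\mf m J) - \ehk(J) = d\,\ehk(R),
\]
the last equality being exactly Proposition~\ref{prop: sup relative eHK}.

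The key step is to compute the colength $\length(J^*/(\mf m J)^*)$ and check that it is precisely $d$. Using the hypothesis $J^* = J + (\mf m J)^*$, the inclusion $J \hookrightarrow J^*$ followed by the quotient map induces a surjection $J \twoheadrightarrow J^*/(\mf m J)^*$ whose kernel is $J \cap (\mf m J)^*$. By Corollary~\ref{cor: tight closure containment} we have $(\mf m J)^* \cap J = \mf m J$, so this yields
\[
J^*/(\mf m J)^* \cong J/\mf m J \cong (R/\mf m)^{\oplus d},
\]
which has length $d$ and is in particular nonzero, so the containment $(\mf m J)^* \subsetneq J^*$ is strict. Hence the ratio attached to this pair equals $d\,\ehk(R)/d = \ehk(R)$, and together with the upper bound this shows the supremum equals (indeed is attained at) $\ehk(R)$.

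The argument is short once the right pair is found; the only real subtlety is recognizing that one must replace the naive pair $(\mf m J, J)$ of Proposition~\ref{prop: sup relative eHK} by its tight closures, and that Corollary~\ref{cor: tight closure containment} is exactly the input preventing the colength from collapsing, so that numerator and denominator both scale by the same factor $d$ and the ratio is preserved. The hypothesis $J^* = J + (\mf m J)^*$ is precisely what guarantees there is no extra length in $J^*$ beyond $J$ modulo $(\mf m J)^*$, which is why it is the natural condition under which the equality holds.
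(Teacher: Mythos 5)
Your proof is correct and follows essentially the same route as the paper: both use Corollary~\ref{cor: tight closure containment} to get $J^*/(\mf mJ)^* \cong J/\mf mJ$ via the hypothesis $J^* = J + (\mf mJ)^*$ and the second isomorphism theorem, then combine the invariance of $\ehk$ under tight closure with Proposition~\ref{prop: sup relative eHK} for the value and Lemma~\ref{lem: sup HK upper} for the upper bound. Your write-up just makes explicit the length count $\length(J/\mf mJ)=d$ that the paper leaves implicit.
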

\begin{proof}
By Corollary~\ref{cor: tight closure containment} we have $(\mf m J)^* \cap J = \mf m J$, it follows that
\[
\frac{J^*}{(\mf m J)^*} =
\frac{J + (\mf m J)^*}{(\mf m J)^*} \cong \frac{J}{J \cap (\mf m J)^*}
= \frac{J}{\mf m J}.
\]
Since $\ehk(I) = \ehk(I^*)$, the conclusion follows from Lemma~\ref{lem: sup HK upper} and Proposition~\ref{prop: sup relative eHK}.
\end{proof}

\begin{remark}
\label{rmk: sup eHK tc and ic}
We caution the readers that it is not always true that
\[
\sup_{\substack{I\subsetneq J, \sqrt{I}=\m \\ I=I^*, J=J^*}} \left \{
\frac{\ehk (I) - \ehk (J)}{\length (R/I) - \length (R/J)} \right \} = \ehk (R).
\]
In fact, our Example~\ref{example: sup of integrally closed dim one} and Example~\ref{example: sup of integrally closed numerical semigroups} give examples of one-dimensional Noetherian local rings such that the supremum above differs from $\ehk(R)$. To see this, note that in dimension one, $\ehk(I)=\eh(I)$ and tightly closed ideals are exactly integrally closed ideals by Lemma~\ref{lem: tc and ic in dimension one}. It follows that in these examples,
\[
\sup_{\substack{I\subsetneq J, \sqrt{I}=\m \\ I=I^*, J=J^*}} \left \{
\frac{\ehk (I) - \ehk (J)}{\length (R/I) - \length (R/J)} \right \} = \sup_{\substack{I\subsetneq J, \sqrt{I}=\m \\ I=\overline{I}, J=\overline{I}}} \left \{
\frac{\ehk (I) - \ehk (J)}{\length (R/I) - \length (R/J)} \right \} = \sup_{\substack{I\subsetneq J, \sqrt{I}=\m \\ I=\overline{I}, J=\overline{I}}} \left \{
\frac{\eh (I) - \eh (J)}{\length (R/I) - \length (R/J)} \right \}.
\]
But in Example~\ref{example: sup of integrally closed dim one} and Example~\ref{example: sup of integrally closed numerical semigroups} we have seen that the last supremum above could be different from $\ehk(R)=\eh(R)$.
\end{remark}


\bibliographystyle{plain}
\bibliography{refs}

\end{document}